\theoremstyle{plain}
\newtheorem{theorem}{Theorem}[section]
\newtheorem{lemma}[theorem]{Lemma}
\newtheorem{proposition}[theorem]{Proposition}
\newtheorem{corollary}[theorem]{Corollary}
\theoremstyle{definition}
\newtheorem*{definition}{Definition}
\newtheorem{problem}{Problem}
\theoremstyle{remark}
\numberwithin{equation}{section}
\DeclareMathOperator{\ch}{ch}
\DeclareMathOperator{\Res}{Res}
\DeclareMathOperator{\Ind}{Ind}
\title{On Minimal Polynomials of Elements\\in Symmetric and Alternating Groups}
\author{Velmurugan S\footnote{The Institute of Mathematical Sciences, Chennai, Homi Bhabha National Institute, Mumbai.\\E-mail: velmurugan@imsc.res.in}}
\date{\vspace{-5ex}}
\begin{document}
	\maketitle
	\begin{abstract}
         Let $ (\rho, V) $ be an irreducible representation of the symmetric group $ S_n$ (or the alternating group $ A_n$), and let $ g $ be a permutation on $n$ letters with each of its cycle lengths divides the length of its largest cycle.
         We describe completely the minimal polynomial of $\rho(g)$, showing that, in most cases, it equals $x^{o(g)} - 1 $, with a few explicit exceptions. As a by-product, we obtain a new proof (using only combinatorics and representation theory) of a theorem of Swanson that gives a necessary and sufficient condition for the existence of a standard Young tableau of a given shape and major index $r \ \text{mod} \ n$, for all $r$. Thereby, we give a new proof of a celebrated result of Klyachko on Lie elements in a tensor algebra, and of a conjecture of Sundaram on the existence of an invariant vector for $n$-cycles.  We also show that for elements $g$ in $S_n$ or $A_n$ of even order, in most cases, $\rho(g)$ has eigenvalue $-1$, with a few explicit exceptions.

        %As a corollary, we solve the problem~$1.1$ of Tiep and Zaleeski~\cite{Tiep_Zalesski} over the field of complex numbers. Moreover, we get a new proof of Swanson's \cite[Theorem 3]{Swanson} and Yang and Stareletov~\cite{Yang_Staroletov}.  
                
		%\begin{itemize}
			%\item Let $V_\lambda$ be an irreducible representation of the symmetric group $ S_n$ (and alternating group $ A_n$)  corresponds to  $\lambda \vdash n$.
			%Let $\delta$ be an irreducible character of the cyclic group $\langle w_{(n)}\rangle$ where it takes $w_{(n)}$ to $e^{2\pi i/n}$.
			%Then we show that 
			%\begin{displaymath}
				%\langle V_\lambda\downarrow_{C_n} , \delta^r \rangle \neq 0
			%\end{displaymath}
			%for all $r\geq 0$ and for almost all partitions $\lambda\vdash n.$
			%This result was proved by Swanson~\cite{Swanson}, which in particular, proves a conjecture of Sundaram~\cite[Remark 4.8]{Sundaramconjecture}. We reprove this result using representation theory and combinatorics.
			
			%\item We also solve the problem~$1.1$ of Tiep and Zaleeski~\cite{Tiep_Zalesski} for the symmmetric group and alternating group over the field of complex numbers.
		%\end{itemize}
 	\end{abstract}

\emph{Keywords}: symmetric group, alternating group, wreath product, character, symmetric function.

\emph{AMS Subject Classification}: Primary 20C30; Secondary  20C15, 05E10, 05E05.

	\section{Introduction}

%Let $(\rho_\lambda,V_\lambda)$ be an irreducible representation for the symmetric group and let $w_\mu\in  S_n$ with cycle type $\mu$ such that $\mu_i|\mu_k$ for all $i \in \{1,2,\dotsc,k\}$ where $k$ denotes the number of parts of $\mu$.

Let $G$ be the symmetric group $S_n$ or the alternating group $A_n$.
For $g\in G$, let $o(g)$ denote the order of $g$.
In this paper, we will compute the minimal polynomial of the operator $\rho(g)$, where $\rho$ is an irreducible representation of $G$ and $g\in G$ is such that all its cycle lengths divide the length of its largest cycle.
\begin{theorem}\label{Theorem:main}
    For partitions $\lambda$ and $\mu$ of $n$, let $(\rho_\lambda,V_\lambda)$ denote the irreducible representation of $S_n$ corresponding to $\lambda$ and $w_\mu\in S_n$ denote a permutation with cycle type $\mu$.
    Assume that each part of $\mu$ divides its largest part.
    Then the minimal polynomial $p(x)$ of $\rho_\lambda(w_\mu)$ is $x^{o(w_\mu)}-1$ except in the following cases:
     \begin{enumerate}
        \item $\lambda=(n)$, $\mu\neq (1^n)$, where $p(x)=(x-1)$,
         \item $\lambda=(n-1,1)$, $\mu=(n)$, where $p(x)=\tfrac{x^n-1}{x-1}$,
         \item $\lambda=(2,1^{n-2})$, $\mu=(n)$, where $p(x)=\tfrac{x^n-1}{x-(-1)^{n-1}}$,
         \item $\lambda=(1^n)$, $\mu\neq (1^n)$, where $p(x)=(x-(-1)^{n-\mathrm{len}(\mu)})$,
         \item $\lambda = (3, 3)$, $\mu=(6)$, where $p(x)=\tfrac{x^6-1}{(x-\zeta)(x-\bar{\zeta})}$ and $\zeta$ is a primitive $6$th root of unity,
         \item $\lambda = (2, 2, 2)$, $\mu=(6)$, where $p(x)=\tfrac{x^6-1}{(x-\zeta)(x-\bar{\zeta})}$ and $\zeta$ is as above,
         \item $\lambda=(2,2)$, $\mu=(4)$, where $p(x)=\tfrac{x^4-1}{(x-i)(x+i)}$ and $i$ is a primitive $4$th root of unity,
         \item $\lambda=(2,2)$, $\mu=(3,1)$, where $p(x)=\tfrac{x^3-1}{x-1}$,
         \item $\lambda=(2,2)$, $\mu=(2,2)$, where $p(x)=\tfrac{x^2-1}{x+1}$.
     \end{enumerate}
\end{theorem}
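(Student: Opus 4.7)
The plan is to determine the minimal polynomial by identifying which cyclotomic factors of $x^m-1$ appear, where $m := o(w_\mu)$ is the largest part of $\mu$. Since $w_\mu^m = e$, the minimal polynomial $p(x)$ divides $x^m-1 = \prod_{d\mid m}\Phi_d(x)$, and because every irreducible representation of $S_n$ is defined over $\mathbb{Q}$, the multiset of eigenvalues of $\rho_\lambda(w_\mu)$ is Galois-stable; consequently $p(x) = \prod_{d\in D}\Phi_d(x)$ for some $D\subseteq\{d : d\mid m\}$, and the task reduces to determining $D$. The generic conclusion is equivalent to $D=\{d:d\mid m\}$.

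To describe the eigenvalues, write $w_\mu = c_1 c_2 \cdots c_\ell$ as a product of disjoint cycles, with $c_i$ an $m_i$-cycle, so $w_\mu$ lies in the Young subgroup $Y := S_{m_1}\times\cdots\times S_{m_\ell}$. The Littlewood--Richardson rule gives
\[
\Res^{S_n}_{Y} V_\lambda \;\cong\; \bigoplus_{(\nu^1,\ldots,\nu^\ell)} c^\lambda_{\nu^1,\ldots,\nu^\ell}\, V_{\nu^1}\otimes\cdots\otimes V_{\nu^\ell},
\]
and the Kraskiewicz--Weyman theorem identifies the eigenvalues of $\rho_{\nu^i}(c_i)$ as the multiset $\{\zeta_{m_i}^{\maj(T)} : T\in \mathrm{SYT}(\nu^i)\}$. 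Since each $m_i\mid m$, it follows that the eigenvalues of $\rho_\lambda(w_\mu)$ are exactly the values $\zeta_m^{r}$ with $r = \sum_{i=1}^\ell (m/m_i)\maj(T_i)$, as $(\nu^1,\ldots,\nu^\ell)$ ranges over LR-compatible tuples and $T_i\in \mathrm{SYT}(\nu^i)$. Consequently $d\in D$ iff some such $r$ satisfies $\gcd(r,m) = m/d$.

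The combinatorial heart of the argument is then to show, for non-exceptional $(\lambda,\mu)$, that every divisor of $m$ arises this way. The crucial case is $\mu=(n)$, where the claim reduces to showing that $\{\gcd(\maj(T),n) : T\in\mathrm{SYT}(\lambda)\}$ contains every divisor of $n$. This can be proved by explicit tableau constructions: starting from a base SYT of shape $\lambda$ and applying local modifications (swapping or sliding adjacent entries between neighboring rows) that shift $\maj$ by prescribed amounts, with an induction on $n$ that removes a rim strip. The listed exceptions are precisely where these constructions degenerate: the one-dimensional cases (1) and (4) are immediate, the hooks (2) and (3) follow from the fact that an $n$-cycle has eigenvalues $1,\zeta_n,\ldots,\zeta_n^{n-1}$ on the permutation representation, and the sporadic shapes in (5)--(7) are handled by direct enumeration. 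With $\mu=(n)$ in hand, the multi-cycle cases follow readily because the LR decomposition affords more freedom in choosing the tuple $(\nu^i)$; only the very constrained shape $\lambda=(2,2)$ produces further exceptions (items (8) and (9)), which are again verified directly.

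The main obstacle will be the tableau-modification argument for $\mu=(n)$, which is essentially the content of Swanson's theorem: one must exhibit SYT of any non-exceptional shape whose major indices realize every prescribed $\gcd$ with $n$. A case analysis by shape type (hook, rectangle, two-row, generic) combined with inductive rim-strip removal should deliver the result while isolating precisely the listed exceptions, but making the induction robust enough to terminate correctly in all borderline cases --- especially two-row shapes close to rectangles, where the sporadic exceptions live --- will be the delicate point.
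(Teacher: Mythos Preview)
Your framework is sound: the Galois-stability reduction to ``which cyclotomic factors appear'', the Littlewood--Richardson decomposition over the Young subgroup, and the Kra\'skiewicz--Weyman identification of eigenvalues via major index are all exactly the ingredients the paper uses to set up the problem, and your treatment of the multi-part case (building up from $(\mu_1,\mu_2)$ one part at a time using LR positivity) is close in spirit to the paper's Lemmas~5.1--5.3.

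Where you diverge from the paper, and where the gap lies, is the base case $\mu=(n)$.  You propose to show directly, by ``explicit tableau constructions\ldots swapping or sliding adjacent entries\ldots with an induction on $n$ that removes a rim strip'', that $\mathrm{SYT}(\lambda)$ hits every residue class of major index mod $n$.  This is precisely the purely combinatorial route that the paper explicitly flags as \emph{still open}; the only case known to go through that way is $r=1$ (M.~Johnson).  You acknowledge this is the ``main obstacle'' but do not supply the construction, and experience with the two-row/near-rectangle cases where the sporadic exceptions live suggests that a naive local-move argument will not close on its own.

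The paper avoids this obstacle entirely.  Its induction on $n=mp$ does not remove a rim strip; instead it passes through the wreath product $C_m\wr C_p\leq S_n$ (Proposition~3.1).  One finds partitions $\mu^1,\ldots,\mu^p\vdash m$, not all equal, with $\chi_{\mu^1}\times\cdots\times\chi_{\mu^p}\leq \Res^{S_n}_{S_m^{\times p}}\chi_\lambda$ and with $\mu^1$ carrying the desired eigenvalue for $C_m$ (inductively) and the others carrying the trivial eigenvalue; Mackey's formula then pushes the eigenvalue up to $C_n$.  The combinatorial work is not to build a single SYT with prescribed major index, but to exhibit suitable LR decompositions $s_{\mu^1}\cdots s_{\mu^p}\geq s_\lambda$ with the $\mu^i$ avoiding the small inductive exceptions --- this is the content of Lemmas~4.1--4.3, which are proved by local modifications of LR skew tableaux rather than of SYT.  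That shift in target is what makes the induction go through and isolates the listed exceptions cleanly.
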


The corresponding theorem for the alternating group $ A_n$ is as follows:
\begin{theorem}\label{Theorem:Main_Alt}
    Let $(\rho,V)$ be an irreducible representation of the alternating group $ A_n$
     and let $g\in  A_n$ with cycle type $\mu$.
     Suppose that every part of $\mu$ divides its largest part.
     Then the minimal polynomial of $\rho(g)$ is $x^{o(g)}-1$ except in the following cases:
     \begin{enumerate}
         \item $V=V_{(n)}$,  $\mu\neq (1^n)$ where $p(x)=x-1$,
         \item $V=V_{(n-1,1)}$, $\mu=(n)$ and $n\geq 5$ odd, where $p(x)=\tfrac{x^n-1}{x-1}$,
         \item $V=V_{(3,1,1)}^+$, $\mu=(5)^+$ or $V=V_{(3,1,1)}^-$, $\mu=(5)^-$, where $p(x)=\tfrac{x^5-1}{(x-\zeta^2)(x-\zeta^3)}$ with $\zeta=\exp(2\pi i/5)$,
         \item $V=V_{(3,1,1)}^+$, $\mu=(5)^-$ or $V=V_{(3,1,1)}^-$, $\mu=(5)^+$, where $p(x)=\tfrac{x^5-1}{(x-\zeta)(x-\zeta^4)}$ with $\zeta$ as above,
         \item $V=V_{(2,2)}^+$, $\mu=(3,1)^+$ or $V=V_{(2,2)}^-$, $\mu=(3,1)^-$,  where $p(x)=x-\exp(2\pi i/3)$,
         \item $V=V_{(2,2)}^+$, $\mu=(3,1)^-$ or $V=V_{(2,2)}^-$, $\mu=(3,1)^+$,  where $p(x)=x-\exp(4\pi i/3)$,
         \item $V=V_{(2,2)}^\pm$, $\mu=(2,2)$, where $p(x)=x-1$.
     \end{enumerate}
\end{theorem}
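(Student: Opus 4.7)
The plan is to leverage Theorem \ref{Theorem:main} via the Clifford theory of the normal subgroup $A_n \trianglelefteq S_n$. Recall that the irreducibles of $A_n$ come in two flavors: for $\lambda \neq \lambda'$, the restriction $V_\lambda \downarrow_{A_n} \cong V_{\lambda'} \downarrow_{A_n}$ is irreducible, while for self-conjugate $\lambda$ the restriction splits as $V_\lambda^+ \oplus V_\lambda^-$. Dually, the $S_n$-class of $g \in A_n$ remains a single $A_n$-class unless its cycle type $\mu$ consists of distinct odd parts, in which case it breaks into two classes $\mu^\pm$. I would organize the proof along these two dichotomies.

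\textbf{Case 1: $\lambda \neq \lambda'$.} The minimal polynomial of $\rho(g)$ on $V_\lambda \downarrow_{A_n}$ equals that of $\rho_\lambda(g)$ on $V_\lambda$, so Theorem \ref{Theorem:main} applies verbatim. A routine scan of its nine exceptional cases, keeping only entries with $w_\mu \in A_n$ and $\lambda \neq \lambda'$, shows that cases (1) and (4) of Theorem \ref{Theorem:main} both specialize to the trivial $A_n$-representation (giving case (1) of Theorem \ref{Theorem:Main_Alt}), cases (2) and (3) combine into case (2) for $n \geq 5$ odd (where $(n-1,1)$ and $(2,1^{n-2})$ are distinct non-self-conjugate partitions), and cases (5)--(8) drop out since they involve odd permutations.

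\textbf{Case 2: $\lambda = \lambda'$.} Let $h(\lambda)$ denote the partition of $n$ formed by the principal hook lengths along the diagonal of $\lambda$; these are distinct odd integers, and by Frobenius the characters $\chi_{V_\lambda^+}$ and $\chi_{V_\lambda^-}$ agree on every $A_n$-class except the two split classes of cycle type $h(\lambda)$. The key combinatorial observation, using the hypothesis that every part of $\mu$ divides the largest part $m$, is that a power $g^j$ has cycle type with all distinct parts if and only if $\gcd(j, m) = 1$; in that case the cycle type of $g^j$ is exactly $\mu$. (Indeed, when $\gcd(j, m) > 1$ the largest $m$-cycle of $g$ splits under $g^j$ into several equal-length cycles, forcing a repeated part.) Therefore, if $\mu \neq h(\lambda)$, no power of $g$ lies in a split class of cycle type $h(\lambda)$, so $\chi_{V_\lambda^+}(g^j) = \chi_{V_\lambda^-}(g^j)$ for every $j$, and $V_\lambda^+ \cong V_\lambda^-$ as $\langle g\rangle$-modules. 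The minimal polynomial on either $V_\lambda^\pm$ thus coincides with that on $V_\lambda$ supplied by Theorem \ref{Theorem:main}; this absorbs case (9) of Theorem \ref{Theorem:main} (where $\lambda = \mu = (2,2)$) into case (7) of Theorem \ref{Theorem:Main_Alt}.

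The subcase $\mu = h(\lambda)$ is where the genuinely new exceptions (3)--(6) arise. Here the Frobenius formula
\[
\chi_{V_\lambda^\pm}(g^\pm) = \tfrac{1}{2}\Bigl(\chi_\lambda(\mu) \pm \sqrt{\epsilon\, h_1 h_2 \cdots h_k}\Bigr), \qquad \epsilon = (-1)^{(n-k)/2},
\]
together with $\chi_{V_\lambda^\pm}(g^j) = \tfrac{1}{2}\chi_\lambda(g^j)$ for $\gcd(j,m) > 1$ and the classical criterion (governed by quadratic-residue data modulo the hooks) that determines which of $\mu^\pm$ contains $g^j$ for $\gcd(j,m) = 1$, feeds into the eigenvalue-multiplicity formula $m_\omega = \tfrac{1}{o(g)}\sum_j \chi_{V_\lambda^\pm}(g^j)\,\omega^{-j}$, from which the minimal polynomial is read off. \textbf{The main obstacle} is to show that no further exceptions occur beyond those listed: the divisibility hypothesis restricts $h(\lambda)$ to a very short list of shapes (for instance, $h(\lambda) = (n)$ forces $\lambda = (a, 1^{a-1})$ to be a self-conjugate hook), and a Gauss-sum computation together with Murnaghan--Nakayama bounds on $\chi_\lambda$ along the non-split power classes shows that, outside the explicit small-dimensional exceptions tabulated in the theorem, every $o(g)$-th root of unity has positive multiplicity in the spectrum of $g$ on $V_\lambda^\pm$.
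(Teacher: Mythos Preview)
Your reduction via Clifford theory is exactly how the paper proceeds: the cases $\lambda\neq\lambda'$, and $\lambda=\lambda'$ with $\mu\neq h(\lambda)$, are disposed of by observing that the $C_\mu$-character on $V_\lambda^\pm$ is (half of) that on $V_\lambda$, so Theorem~\ref{Theorem:main} applies directly. One small slip in your scan: exception~(8) of Theorem~\ref{Theorem:main} has $\mu=(3,1)$, which is an \emph{even} permutation, and $\lambda=(2,2)$ is self-conjugate with $h(\lambda)=(3,1)$; so it does not ``drop out'' from Case~1 but rather lands squarely in your split subcase $\mu=h(\lambda)$, where it produces exceptions~(5) and~(6).

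Where you genuinely diverge from the paper is the handling of the split subcase $\mu=h(\lambda)$. Your proposal --- Gauss-sum computations and Murnaghan--Nakayama bounds fed into the eigenvalue-multiplicity formula --- is precisely the analytic route (as in Swanson and Yang--Staroletov) that this paper is designed to bypass; moreover you have not carried it out, and your remark that the divisibility hypothesis restricts $h(\lambda)$ to a ``very short list'' is misleading, since partitions into distinct odd parts each dividing the largest form an infinite family as $n$ grows. The paper's argument here is instead purely representation-theoretic: for $n\ge 26$ and $\lambda=\phi(\mu)$ with $\mu$ of length $\ge 2$, one sets $\nu^i=\phi(\mu_i)$ (the self-conjugate hook of size $\mu_i$), and shows for every linear character $\delta$ of $C_\mu$ that
\[
\Ind_{C_\mu}^{A_n}\delta \;\ge\; \Ind_{A_{\mu_1}\times\cdots\times A_{\mu_k}}^{A_n}\,\chi_{\nu^1}\times\cdots\times\chi_{\nu^k}\;\ge\;\chi_{\phi(\mu)}^{\pm},
\]
the first inequality coming from Theorem~\ref{theorem:Main_2_min_alternating} applied to each factor $\mu_i$ (with a small adjustment of $\nu^j,\nu^{j+1}$ when $3$ or $5$ occurs as a part, those being exactly the exceptional $\mu_i$ in Theorem~\ref{theorem:Main_2_min_alternating}), and the second from Littlewood--Richardson, since stacking the self-conjugate hooks $\nu^i$ along the diagonal rebuilds $\phi(\mu)$. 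The single-part case $\mu=(n)$ is Theorem~\ref{theorem:Main_2_min_alternating} itself. This yields every $o(g)$-th root of unity as an eigenvalue on $V_\lambda^{\pm}$ without any character asymptotics; your sketch, by contrast, leaves the core estimate unproved.
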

A. E. Zalesskii posed the following problem, hoping that it would have a positive answer. We refer the reader to~\cite{10.1007/BFb0082027} for more related problems.

For $G=GL_n(\mathbb{F}_{p^k})$, let $g\in G$ and $\phi$ be an irreducible representation over $\mathbb{C}$, the field of complex numbers or over an algebraically closed field of characteristic $q\neq p$.
\begin{problem}[Zalesskii \cite{10.1007/BFb0082027}]\label{problem:zalesski}
    Describe the triples $(G,\phi,g)$ such that $1<\deg(\phi(g))<m(g)$,
where $m(g)$ is the order of the element $gZ(G)$ in $G/{(Z(G))}$, and $\deg(\phi(g))$ denotes the degree of the minimal polynomial of $\phi(g)$.
\end{problem}
Note that, in general, $\deg(\phi(g))\leq m(g)$.

Zalesskii solved Problem~\ref{problem:zalesski} when $g$ has order $p$ in \cite{Zalesski_in_russian}.
Zalesskii~\cite{Zalesski_quasi_simple} determined the irreducible representations of quasi-simple groups in which some element of prime order $p$ has minimal polynomial whose degree is less than $p$. For more results in this direction, see~\cite{Zalesski_distinct_eigen},\cite{Zalesski_prime_projective_alternating},\cite{Zalesski_quasi_cyclic_sylow}.

Our Theorems~\ref{Theorem:main},~\ref{Theorem:Main_Alt} solve the following problem of Tiep and Zalesskii for the symmetric and alternating groups over the field of complex numbers.

\begin{problem}\cite[Problem 1.1]{Tiep_Zalesski}\label{quesion:Tiep_zalesski}
	Determine all possible values for $\deg(\theta(g))$, and if possible, all triples
	$(G, \theta, g)$ with $\deg(\theta(g)) < m(g)$, in the first instance under the condition that $o(g)$ is a $p$-power, for some prime $p$.
\end{problem}

This work is motivated by one more interesting problem.
Before defining the problem, let us introduce some notation.

For a partition $\lambda$ of $n$, let $\mathrm{SYT}(\lambda)$ denote the set of all standard Young tableau with shape $\lambda$ and the entries belong to $\{1,2,\dotsc,n\}$. The \emph{major index} of a standard tableau (denoted $\mathrm{maj}(T)$) $T$ is the sum of all $i\in \{1,2,\dots,n-1\}$ such that the row of $T$ containing $i+1$ is below the row of $T$ containing $i$.

Then let
\begin{displaymath}
    a_\lambda^r=|\{T\in \mathrm{SYT}(\lambda)\mid \mathrm{maj}(T)\equiv r \text{ mod } n\}|,
\end{displaymath}
where $r$ is taken modulo $n$.

Kra\'skiewicz and Weyman~\cite{Kraskiewicz_Weyman} related the above number $a^r_\lambda$ to the representation theory of symmetric groups as follows:
\begin{theorem}\cite[Theorem 1]{Kraskiewicz_Weyman}\label{Theorem:Kra_wey}
Let $C_n$ be the cyclic subgroup of $S_n$ generated by $w_n=(1 2\dotsc n).$
    Let $\delta^r$ be an irreducible character of the $C_n$ obtained by sending $w_n$ to $e^{2\pi ir/n}$.
    Then
    \begin{displaymath}
        a_{\lambda}^r=\langle \Ind_{C_n}^{S_n}\delta^r, \chi_\lambda\rangle_{S_n}  =  \langle \delta^r, \Res_{C_n}^{S_n}\chi_\lambda\rangle_{C_n}.
    \end{displaymath}
\end{theorem}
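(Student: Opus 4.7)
The plan is to reduce both equalities to a single identity between character values of powers of the long cycle and the major-index generating polynomial. The second equality is immediate from Frobenius reciprocity, so attention focuses on the first. Let $\zeta = e^{2\pi i/n}$, so that $\delta^r(w_n^k) = \zeta^{rk}$. Character orthogonality on $C_n$ gives
\[
\langle \delta^r,\,\Res_{C_n}^{S_n}\chi_\lambda\rangle_{C_n} = \frac{1}{n}\sum_{k=0}^{n-1} \zeta^{-rk}\,\chi_\lambda(w_n^k),
\]
while discrete Fourier inversion on $\mathbb{Z}/n\mathbb{Z}$, applied to the indicator of $\{T \in \mathrm{SYT}(\lambda) : \maj(T) \equiv r \pmod{n}\}$, gives
\[
a_\lambda^r = \frac{1}{n}\sum_{k=0}^{n-1} \zeta^{-rk}\,f^\lambda(\zeta^k), \qquad f^\lambda(q) := \sum_{T \in \mathrm{SYT}(\lambda)} q^{\maj(T)}.
\]
Thus the theorem reduces to the classical identity
\[
\chi_\lambda(w_n^k) = f^\lambda(\zeta^k) \quad \text{for all } 0 \le k < n. \qquad (\star)
\]

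To prove $(\star)$ I would compute both sides in parallel. Set $d = n/\gcd(n,k)$, so that $w_n^k$ has cycle type $(d^{n/d})$. On the character side, the Murnaghan--Nakayama rule expresses $\chi_\lambda(w_n^k)$ as a signed count of $d$-ribbon tilings of $\lambda$; it vanishes unless the $d$-core of $\lambda$ is empty, and otherwise the standard $d$-quotient bijection factors it as a signed product of hook-length counts on the components of the $d$-quotient of $\lambda$. On the combinatorial side, the hook formula
\[
f^\lambda(q) = \frac{q^{b(\lambda)}\prod_{i=1}^{n}(1-q^i)}{\prod_{c \in \lambda}(1-q^{h(c)})}
\]
is a ratio in which, after specializing $q = \zeta^k$, the factor $1-q^i$ vanishes iff $d \mid i$ and $1-q^{h(c)}$ vanishes iff $d \mid h(c)$. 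By the Littlewood--Nakayama hook criterion the two vanishing patterns cancel exactly when the $d$-core of $\lambda$ is empty, and what remains is again a hook-length product on the $d$-quotient.

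Matching the two computations --- most delicately, the signs from the ribbon/quotient sign rule and the phase factor $\zeta^{k\,b(\lambda)}$ --- yields $(\star)$. Substituting $(\star)$ into the Fourier expression for $a_\lambda^r$ then matches it termwise with the expression for $\langle \delta^r,\Res_{C_n}^{S_n}\chi_\lambda\rangle_{C_n}$, completing the proof. The sign bookkeeping in this final matching is the main technical obstacle; an alternative route that sidesteps it is to derive $(\star)$ from the Frobenius character formula $s_\lambda = \sum_\mu z_\mu^{-1}\chi_\lambda(\mu)\,p_\mu$ combined with the principal specialization $s_\lambda(1,q,q^2,\dots) = q^{b(\lambda)}/\prod_{c}(1-q^{h(c)})$ and the evaluation of the power sums $p_\mu(1,q,q^2,\dots)$ at $q = \zeta^k$.
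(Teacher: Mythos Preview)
The paper does not contain a proof of this statement: Theorem~\ref{Theorem:Kra_wey} is simply quoted from Kra\'skiewicz and Weyman \cite{Kraskiewicz_Weyman} as background for the discussion of $a_\lambda^r$, and no argument for it is given anywhere in the text. There is therefore nothing in the paper to compare your proposal against.

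On its own merits your proposal is correct. The second equality is Frobenius reciprocity, and your discrete Fourier reduction of the first equality to the identity $(\star)$, $\chi_\lambda(w_n^k)=f^\lambda(\zeta^k)$, is valid and standard. The identity $(\star)$ itself is classical (it appears, for instance, in Springer's work on regular elements and is equivalent to the cyclic sieving/fake-degree interpretation of $f^\lambda(q)$). Of the two routes you sketch for $(\star)$, the second --- principal specialization of the Frobenius expansion $p_\mu=\sum_\lambda \chi_\lambda(\mu)s_\lambda$ combined with the $q$-hook formula for $f^\lambda(q)$ --- is the cleaner one and does indeed sidestep the ribbon/quotient sign bookkeeping you correctly flag as the delicate point of the first route. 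Either way the argument goes through; you have not missed anything, and nothing you wrote is wrong.
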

Note that $a_\lambda^r=a_\lambda ^k$ whenever we have an equality $(r,n)=(k,n)$ of greatest common divisors, since characters of the symmetric group are real-valued.

For a vector space $V$, consider the tensor algebra $TV$ of $V$.
Then $TV=\bigoplus_{n\geq 0} T^nV$ and each $T^nV$ is an $GL(V)$-module.
We recall that the irreducible representations of $GL(V)$ are indexed by partitions with at most $\dim(V)$ parts, and the representation corresponding to a partition $\lambda$ is denoted by $W_\lambda$. The irreducible constituents of $T^nV$, as an $GL(V)$-module, are those $W_\lambda$ for which $\lambda$ is a partition of $n$ with at most $\dim(V)$ parts. 
Since $TV$ is an associative algebra, it is, in particular, a Lie algebra.
Consider the Lie subalgebra $LV$ of $TV$ generated by $V$.
Then it turns out that $LV$ is the free Lie algebra on $V$ (see \cite{Free_Lie_Reutenauer}, Theorem $0.4$).
We have $LV=\bigoplus_{n\geq 1} L_nV$, where $L_nV=LV\cap T^nV$.
It is easy to see that each $L_nV$ is also a $GL(V)$-module.
It is natural to ask what the irreducible constituents of $L_nV$ are.
The following two theorems of Klyachko answer this question.
Firstly, he beautifully related this question to a question about the symmetric group as follows:
\begin{theorem}\cite[Corollary 1]{Klyachko}\label{theorem:Klyachko_gen_sym}
    Let $\lambda$ be a partition of $n$ with at most $\dim(V)$ parts. Then 
    we have 
    \begin{displaymath}
        m(W_\lambda,L_nV)=m(V_\lambda,\Ind_{C_n}^{S_n} w), 
    \end{displaymath}
    where $m(W_\lambda,L_nV)$ denotes the multiplicity of a $GL(V)$-module $W_\lambda$ in $L_nV$ and $m(V_\lambda,\Ind_{C_n}^{S_n} w)$ denotes the multiplicity of a $S_n$-module $V_\lambda$ in $\Ind_{C_n}^{S_n} w$ for any faithful one dimensional character $w$ of the cyclic group $C_n$.
\end{theorem}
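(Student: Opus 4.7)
The plan is to compute both $\chi^{GL(V)}(L_nV)$ (the $GL(V)$-character of $L_nV$, viewed as a symmetric function in eigenvalues) and $\ch(\Ind_{C_n}^{S_n} w)$ (the Frobenius characteristic of the induced representation) and show that each equals
\[
L_n \;:=\; \frac{1}{n}\sum_{d\mid n}\mu(d)\,p_d^{\,n/d}.
\]
The statement then follows by comparing coefficients of $s_\lambda$: on the $GL(V)$-side, the coefficient of $s_\lambda(x_1,\ldots,x_{\dim V})$ in the character of a polynomial $GL(V)$-module of degree $n$ is the multiplicity of $W_\lambda$; on the $S_n$-side, the coefficient of $s_\lambda$ in the Frobenius characteristic is the multiplicity of $V_\lambda$.

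For the $GL(V)$-side, I would appeal to Poincar\'e--Birkhoff--Witt, which gives a $GL(V)$-equivariant graded isomorphism $T(V)\cong S(L(V))$, where $L(V)=\bigoplus_{n\ge 1}L_nV$ is graded with $L_nV$ in degree $n$. Taking Hilbert series in a formal variable $t$ and then logarithms produces
\[
\sum_{k\ge 1}\frac{p_1^k}{k}\,t^k \;=\; \sum_{n\ge 1}\sum_{d\ge 1}\frac{1}{d}\,p_d[\chi_n]\,t^{dn},
\]
where $\chi_n$ denotes the character of $L_nV$ and $p_d[\cdot]$ is plethystic substitution. Extracting the coefficient of $t^N$ and applying M\"obius inversion on the divisors of $N$ (or verifying the Ansatz directly) gives $\chi_n=L_n$. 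For the $S_n$-side, Frobenius's formula for induced characters reads
\[
\ch(\Ind_{C_n}^{S_n}w) \;=\; \frac{1}{n}\sum_{k=0}^{n-1} w(c^k)\,p_{n/\gcd(n,k)}^{\gcd(n,k)},
\]
where $c$ is a generator of $C_n$. Grouping by $d=\gcd(n,k)$, the inner sum becomes $\sum_{\gcd(j,n/d)=1}(\zeta^d)^j=\mu(n/d)$ with $\zeta:=w(c)$ a primitive $n$-th root of unity (here faithfulness of $w$ is essential, since it guarantees that $\zeta^d$ is a primitive $(n/d)$-th root of unity). Substituting $e=n/d$ collapses the sum to $L_n$, matching the $GL(V)$-side.

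The main obstacle will be the PBW step and the ensuing plethystic M\"obius inversion. Writing the Hilbert-series identity unambiguously requires the $GL(V)$-equivariance of the PBW isomorphism (standard but delicate) together with the identification of the character of $S(L_nV)$ with $\exp\!\bigl(\sum_{d\ge 1}\tfrac{1}{d}p_d[\chi_n]\,t^{dn}\bigr)$, a plethystic rather than ordinary manipulation; once the identity is set up, the inversion itself is routine. An alternative route that would bypass PBW is to realize $L_nV$ as the image of an explicit idempotent $e_n\in\mathbb{C}[S_n]$ (the Dynkin idempotent, or Klyachko's $\tfrac{1}{n}\sum_{\sigma\in S_n}\omega^{\maj(\sigma)}\sigma$) acting on $V^{\otimes n}$ by permutation of tensor positions, and then to verify directly that $\mathbb{C}[S_n]\cdot e_n\cong \Ind_{C_n}^{S_n}w$ as $S_n$-modules; this transfers the difficulty from plethysm into the descent-algebra analysis of those idempotents.
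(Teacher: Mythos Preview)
The paper does not prove this statement at all: it is quoted verbatim as \cite[Corollary~1]{Klyachko} and used only as background in the introduction, so there is no ``paper's own proof'' to compare against. Your outline is a correct and standard derivation of the result (Witt's formula via PBW on the $GL(V)$-side, and the Ramanujan-sum evaluation of $\ch(\Ind_{C_n}^{S_n}w)$ on the $S_n$-side), and the alternative route through the Klyachko/Dynkin idempotent that you mention is equally valid; either would serve as an independent proof, but neither is needed for anything the present paper does.
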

He solved the problem for the symmetric group as follows:
\begin{theorem}\cite[Proposition 2]{Klyachko}\label{theorem:Klyachko_sym}
Let $\lambda$ be a partition of $n\geq 3$. Then the restriction to $C_n$ of every irreducible representation $V_\lambda$ of $S_n$ contains a faithful representation of $C_n$ except when $\lambda=(n),(1^n),(2,2)$ or $(2,2,2)$.
%(equivalently, $a_\lambda^1>0$)
\end{theorem}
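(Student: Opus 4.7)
The plan is to deduce the theorem directly from Theorem~\ref{Theorem:main} applied with $\mu = (n)$, the cycle type of an $n$-cycle $w_n$. Since $C_n$ is abelian, the irreducible character $\delta^r$ of $C_n$ appears in $\mathrm{Res}_{C_n}^{S_n} V_\lambda$ if and only if $e^{2\pi i r/n}$ is an eigenvalue of $\rho_\lambda(w_n)$, which in turn holds if and only if $e^{2\pi i r/n}$ is a root of the minimal polynomial $p(x)$ of $\rho_\lambda(w_n)$. Consequently $\mathrm{Res}_{C_n}^{S_n} V_\lambda$ contains a faithful character of $C_n$ (i.e.\ some $\delta^r$ with $\gcd(r,n) = 1$) if and only if $p(x)$ admits at least one primitive $n$-th root of unity among its roots.

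Generically Theorem~\ref{Theorem:main} gives $p(x) = x^n - 1$, whose roots include every primitive $n$-th root of unity, so a faithful character automatically appears. The proof therefore reduces to a finite case check: for each exception of Theorem~\ref{Theorem:main} with $\mu = (n)$ (that is, items (1)--(7); items (8)--(9) concern $\mu \ne (n)$ and do not apply), decide whether the explicit polynomial $p(x)$ still has some primitive $n$-th root of unity as a root.

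For $\lambda = (n-1, 1)$ and $\lambda = (2, 1^{n-2})$, the minimal polynomial has the form $(x^n - 1)/(x - c)$ with $c \in \{1, -1\}$; since no primitive $n$-th root of unity equals $\pm 1$ for $n \ge 3$, such roots are retained and a faithful character appears. Likewise for $\lambda = (3, 3)$ with $n = 6$, inspection of $p(x)$ shows that the primitive $6$th roots persist, so $V_{(3,3)}$ contributes a faithful $C_6$-character. In the remaining cases, the minimal polynomial excludes every primitive $n$-th root of unity: for $\lambda = (n)$ one has $p(x) = x - 1$ with sole eigenvalue $1$; for $\lambda = (1^n)$ the representation is one-dimensional with eigenvalue $\pm 1$, never primitive for $n \ge 3$; for $\lambda = (2, 2)$ with $n = 4$ the polynomial simplifies to $x^2 - 1$ and omits $\pm i$; and for $\lambda = (2, 2, 2)$ with $n = 6$ the polynomial $(x^6 - 1)/((x - \zeta)(x - \bar\zeta))$ with $\zeta$ a primitive $6$th root of unity excludes $\zeta$ and $\bar\zeta$. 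Collecting these four families yields exactly Klyachko's list $\{(n), (1^n), (2, 2), (2, 2, 2)\}$.

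The bulk of the conceptual work is already absorbed into Theorem~\ref{Theorem:main}, so there is no serious obstacle. The only thing to watch is the bookkeeping in the case analysis, verifying for each exceptional $(\lambda, (n))$ whether the missing roots of $p(x)$ are primitive or not; this is a short finite verification.
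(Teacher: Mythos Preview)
Your approach is exactly the paper's: deduce Klyachko from Theorem~\ref{Theorem:main} (equivalently Theorem~\ref{theorem:Main_min}) with $\mu=(n)$ and run through the finitely many exceptional $(\lambda,(n))$.

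There is one point that needs attention. For $\lambda=(3,3)$ you write that ``inspection of $p(x)$ shows that the primitive $6$th roots persist''. But item~(5) of Theorem~\ref{Theorem:main}, taken literally, asserts $p(x)=(x^6-1)/\bigl((x-\zeta)(x-\bar\zeta)\bigr)$ with $\zeta$ a \emph{primitive $6$th} root of unity, and that polynomial \emph{removes} the two primitive $6$th roots, not retains them; read at face value, your inspection would yield the opposite conclusion and would place $(3,3)$ among Klyachko's exceptions. A direct computation of the restriction $\chi_{(3,3)}|_{C_6}$ (using $\chi_{(3,3)}(e)=5$, $\chi_{(3,3)}(w_6)=0$, $\chi_{(3,3)}(w_6^2)=2$, $\chi_{(3,3)}(w_6^3)=-3$) gives multiplicities $(a_0,\dotsc,a_5)=(1,1,0,2,0,1)$, so the eigenvalues of $\rho_{(3,3)}(w_6)$ are $1,\zeta,-1,-1,\bar\zeta$ and the roots that are \emph{missing} are the primitive third roots $\omega,\bar\omega$. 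Thus the correct minimal polynomial is $(x^6-1)/\bigl((x-\omega)(x-\bar\omega)\bigr)$; item~(5) appears to contain a typo (``$6$th'' should read ``$3$rd''). Once this is noted, your verification for $(3,3)$ is correct and the rest of the case analysis goes through unchanged. You should make this computation explicit rather than appeal to ``inspection'', since as written the claim contradicts the stated polynomial.
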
 

Combining the above Theorems~\ref{theorem:Klyachko_gen_sym},~\ref{theorem:Klyachko_sym}, Klyachko showed the following:
\begin{theorem}\label{theorem:Klyachko_gen_lie_rep}
Let $\lambda$ be a partition of $n\geq 7$ having at most $\dim(V)$ parts.
Then every irreducible representation $W_\lambda$ of $GL(V)$ which appears as an irreducible constituent of $T^nV$, is also appears as an irreducible constituent of $L_nV$ except when $\lambda=(n),(1^n)$. Equivalently, $W_\lambda$ is an irreducible constituent of the $GL(V)$-module $L_nV$ except when $\lambda=(n),(1^n)$.
\end{theorem}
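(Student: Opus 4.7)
The plan is to derive Theorem~\ref{theorem:Klyachko_gen_lie_rep} as a direct corollary of Theorems~\ref{theorem:Klyachko_gen_sym} and~\ref{theorem:Klyachko_sym} by chaining them together via Frobenius reciprocity. First, I would fix a faithful one-dimensional character $w$ of $C_n$ and apply Theorem~\ref{theorem:Klyachko_gen_sym} to rewrite the desired multiplicity as
\begin{displaymath}
    m(W_\lambda, L_nV) \;=\; m\bigl(V_\lambda,\ \Ind_{C_n}^{S_n} w\bigr).
\end{displaymath}
Frobenius reciprocity then converts this into $\langle w,\ \Res_{C_n}^{S_n}\chi_\lambda\rangle_{C_n}$, so the question becomes whether the faithful character $w$ occurs in the restriction of $V_\lambda$ to $C_n$.

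The next step is to observe that this multiplicity is the same for every faithful one-dimensional character of $C_n$. Indeed, the characters of $S_n$ are rational-valued, and the set of faithful (i.e.\ primitive) characters of $C_n$ forms a single Galois orbit over $\mathbb{Q}$; applying any $\operatorname{Gal}(\mathbb{Q}(\zeta_n)/\mathbb{Q})$ automorphism to both $w$ and $\chi_\lambda$ leaves $\chi_\lambda$ unchanged and permutes the faithful characters transitively. Hence $m(W_\lambda, L_nV)$ is nonzero if and only if \emph{some} faithful character of $C_n$ appears in $\Res_{C_n}^{S_n} V_\lambda$.

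Now I would apply Theorem~\ref{theorem:Klyachko_sym}: for $n \geq 3$, the restriction $\Res_{C_n}^{S_n} V_\lambda$ contains a faithful character of $C_n$ precisely when $\lambda \notin \{(n),\ (1^n),\ (2,2),\ (2,2,2)\}$. Under the hypothesis $n \geq 7$, the exceptional partitions $(2,2)$ and $(2,2,2)$ (which are partitions of $4$ and $6$ respectively) are excluded automatically, so the only surviving exceptions are $\lambda = (n)$ and $\lambda = (1^n)$. The hypothesis that $\lambda$ has at most $\dim(V)$ parts ensures that $W_\lambda$ is a genuine $GL(V)$-module, i.e.\ an irreducible constituent of $T^nV$, making the ``equivalently'' clause meaningful.

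There is no real obstacle in this argument since the deep content is encapsulated in Theorems~\ref{theorem:Klyachko_gen_sym} and~\ref{theorem:Klyachko_sym}; the only subtlety worth stating carefully is the Galois-invariance remark that justifies passing from ``some faithful character appears'' (which is what Theorem~\ref{theorem:Klyachko_sym} asserts) to ``the specific $w$ appears'' (which is what Frobenius reciprocity requires). For completeness I would also verify directly that in the two exceptional cases the multiplicity is indeed zero: $V_{(n)}$ restricts to the trivial character of $C_n$, and $V_{(1^n)}$ restricts to the sign character evaluated on an $n$-cycle, which is $(-1)^{n-1}\in\{\pm 1\}$ and hence not faithful for $n \geq 3$.
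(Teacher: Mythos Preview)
Your proposal is correct and follows exactly the route the paper indicates: the paper does not spell out a proof of Theorem~\ref{theorem:Klyachko_gen_lie_rep} but simply states that it follows by combining Theorems~\ref{theorem:Klyachko_gen_sym} and~\ref{theorem:Klyachko_sym}, which is precisely what you do. Your Galois-invariance remark is a clean way to pass from ``some faithful character appears'' to ``the specific $w$ appears'', though one could also sidestep it by noting that Theorem~\ref{theorem:Klyachko_gen_sym} is stated for \emph{any} faithful $w$, so you may simply take $w$ to be the faithful character supplied by Theorem~\ref{theorem:Klyachko_sym}.
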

For a partition $\lambda\vdash n$ with at most $\dim(V)$ parts, we have, using Theorems~\ref{Theorem:Kra_wey},~\ref{theorem:Klyachko_gen_sym},
\begin{equation}
    m(W_\lambda,L_nV)=m(V_\lambda,\Ind_{C_n}^{S_n} w)=a_\lambda^1.
\end{equation}
Klyachko's proof of Theorem~\ref{theorem:Klyachko_sym} simply uses representation theory.
The goal is to prove the corresponding result for all $r\geq1$ using representation theory and combinatorics, which we accomplish in this article. We provide a new proof of Theorem~\ref{theorem:Klyachko_sym} using Theorem~\ref{Theorem:main}. Using our solution with Theorem~\ref{theorem:Klyachko_gen_sym}, we get a new proof of a celebrated Klyachko's theorem~\ref{theorem:Klyachko_gen_lie_rep} on Lie representations of general linear group $GL(V)$.
Using Theorems~\ref{Theorem:Kra_wey},~\ref{theorem:Klyachko_sym}, we have that $a_\lambda^1>0$ except when $\lambda=(n),(1^n),(2,2),(2,2,2)$. For a Combinatorial proof of this result, see M. Johnson~\cite{M.Johnson}.
For more proofs of Theorem~\ref{theorem:Klyachko_gen_lie_rep}, see~\cite{Kovacs_Stohr}, \cite{Schocker_kly}.

Our Theorem~\ref{Theorem:main} yields the following for the permutations with cycle type equal to $(n)$:
\begin{theorem}\label{theorem:Main_min}
    Let $\lambda$ be a partition of $n$. The minimal polynomial $p(x)$ of $\rho_\lambda(w_n)$ is $x^n-1$ except in the following cases:
    \begin{enumerate}
        \item $\lambda=(n)$ with $n>1$ where $p(x)=(x-1)$,
         \item $\lambda=(n-1,1)$ where $p(x)=\tfrac{x^n-1}{x-1}$,
         \item $\lambda=(2,1^{n-2})$ where $p(x)=\tfrac{x^n-1}{x-(-1)^{n-1}}$,
         \item $\lambda=(1^n)$ where $p(x)=(x-(-1)^{n-1})$,
         \item $\lambda = (3, 3)$ where $p(x)=\tfrac{x^6-1}{(x-\zeta)(x-\bar{\zeta})}$ and $\zeta$ is a primitive $6$th root of unity,
         \item $\lambda = (2, 2, 2)$ where $p(x)=\tfrac{x^6-1}{(x-\zeta)(x-\bar{\zeta})}$ with $\zeta$ as above,
         \item $\lambda=(2,2)$ where $p(x)=\tfrac{x^4-1}{x^2+1}$.
    \end{enumerate}
\end{theorem}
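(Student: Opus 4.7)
The plan is to derive Theorem~\ref{theorem:Main_min} as the specialization of Theorem~\ref{Theorem:main} to the case $\mu = (n)$. A single-part partition vacuously satisfies the hypothesis that every part divides the largest part, and $o(w_n) = n$, so outside the exceptional situations Theorem~\ref{Theorem:main} gives the minimal polynomial $x^{o(w_\mu)} - 1 = x^n - 1$, which is exactly the generic answer asserted here. The only task, then, is to intersect the nine-item exception list of Theorem~\ref{Theorem:main} with the condition $\mu = (n)$ and translate each survivor into an item of the seven-item list of Theorem~\ref{theorem:Main_min}.

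I would proceed case by case. Cases 2, 3, 5, 6, 7 of Theorem~\ref{Theorem:main} already fix $\mu = (n), (n), (6), (6), (4)$ respectively, and transcribe verbatim to items (2), (3), (5), (6), (7). Cases 8 and 9 fix $\mu = (3,1)$ and $\mu = (2,2)$, and therefore drop out. Case 1 applies whenever $\mu \neq (1^n)$, which holds for $\mu = (n)$ whenever $n > 1$; this yields item (1). Case 4, for $\lambda = (1^n)$, gives $p(x) = x - (-1)^{n - \mathrm{len}(\mu)}$, and since $\mathrm{len}((n)) = 1$ the exponent collapses to $n - 1$, producing item (4).

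For item (7), one records the cosmetic identity
\begin{equation*}
\frac{x^4-1}{(x-i)(x+i)} \;=\; \frac{x^4-1}{x^2+1} \;=\; x^2 - 1,
\end{equation*}
so the formula in case 7 of Theorem~\ref{Theorem:main} coincides with the compact form in item (7) of Theorem~\ref{theorem:Main_min}. In short, this derivation is purely bookkeeping; there is no genuine obstacle at this stage, because the substantive content has already been placed in Theorem~\ref{Theorem:main}, whose proof is the true locus of difficulty and which I have not yet seen.
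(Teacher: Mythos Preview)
Your reduction is formally impeccable as a specialization, but in the context of this paper it is circular. Theorem~\ref{theorem:Main_min} is not a mere corollary here; it is the foundation on which Theorem~\ref{Theorem:main} is built. Concretely, the proof of Theorem~\ref{Theorem:main} in Section~\ref{section:Proof of Theorem Theorem:main} invokes Lemma~\ref{lemma:Yang_stareletov}, whose proof explicitly appeals to Theorem~\ref{theorem:Main_min} (see Equation~\eqref{eq:non_trivial_trivial_yang}); it also uses Theorem~\ref{theorem:inv_vec_sym}, whose base case is again Theorem~\ref{theorem:Main_min}. So deducing Theorem~\ref{theorem:Main_min} from Theorem~\ref{Theorem:main} would assume exactly what needs to be shown.

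The paper instead gives an independent proof of Theorem~\ref{theorem:Main_min} in Section~\ref{section:Proof of Thorem theorem:Main_min}, by induction on $n$. The cases $n\le 26$ are checked directly. For $n$ prime one argues via the trace $\chi_\lambda(w_n)$ and the Murnaghan--Nakayama rule. For $n$ composite one writes $n=mp$ and applies the key Proposition~\ref{proposition:Giannelli} (a wreath-product transfer through $C_m\wr C_p$), fed by the combinatorial Lemmas~\ref{lemma:base}--\ref{Lemma:gen_case} which manufacture suitable tuples $(\mu^1,\dotsc,\mu^p)$ of partitions of $m$; the inductive hypothesis supplies the needed eigenvalue information at level $m$. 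The residual shapes $(n/2,n/2)$ and $(2^{n/2})$ for $n$ even are handled via Corollaries~\ref{corollary:Giannelli_three_distinct} and~\ref{corollary:Giannelli_rectangular_two_row}. This is where the actual work lies, and it cannot be bypassed by appealing to Theorem~\ref{Theorem:main}.
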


This result was proved by Swanson\cite{Swanson} and independently by Yang and Staroletov~\cite{Yang_Staroletov} (who proved a more general result). Their proofs use asymptotics of characters of symmetric groups and number theory.
We provide a new proof (\ref{proof_of_Theorem:main_min}) of this result using only representation theory and combinatorics, which, we believe, is more accessible.
We remark that a purely combinatorial proof of the above result is still an open problem.

We would also like to note that our main theorem~\ref{Theorem:main} reconfirms the following conjecture of Sundaram, which was proved by Swanson, as a special case of Theorem \ref{theorem:Main_min}.
\begin{theorem}~\cite[Remark 4.8]{Sundaramconjecture}\label{conjecture:Sundaram}
    Let $\lambda$ be a partition of $n$. Then $1$ is an eigenvalue of the operator $\rho_\lambda(w_n)$ (equivalently, $\rho_\lambda(w_n)$ has a nonzero invariant vector in $V_\lambda$) except in the following cases.
    \begin{enumerate}
        \item $\lambda=(n-1,1)$ with $n\geq 2$,
        \item $\lambda=(2,1^{n-2})$ with $n\geq 3$ odd,
        \item $\lambda=(1^n)$ with $n$ even.
    \end{enumerate}
\end{theorem}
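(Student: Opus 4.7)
The plan is to deduce this directly from Theorem~\ref{theorem:Main_min}. The key observation is that $1$ is an eigenvalue of a diagonalizable (unitary) operator if and only if the factor $(x-1)$ appears in its minimal polynomial; equivalently, since $\rho_\lambda(w_n)$ has order dividing $n$ and hence is diagonalizable with roots-of-unity eigenvalues, $1$ is an eigenvalue of $\rho_\lambda(w_n)$ iff $p(1)=0$. So the task reduces to checking, case by case in Theorem~\ref{theorem:Main_min}, whether the stated minimal polynomial vanishes at $x=1$.

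Running through the seven exceptional cases of Theorem~\ref{theorem:Main_min}: for $\lambda=(n)$ we have $p(x)=x-1$, which vanishes at $1$; for $\lambda=(n-1,1)$ we have $p(x)=(x^n-1)/(x-1)$, so $p(1)=n\neq 0$, giving the first Sundaram exception; for $\lambda=(2,1^{n-2})$ with $p(x)=(x^n-1)/(x-(-1)^{n-1})$, a short parity check shows $p(1)=n\neq 0$ when $n$ is odd (the denominator is $x-1$) while $p(1)=0$ when $n$ is even (the denominator is $x+1$), matching the second exception; for $\lambda=(1^n)$ we have $p(x)=x-(-1)^{n-1}$, which vanishes at $1$ exactly when $n$ is odd, giving the third exception; and for $\lambda=(3,3), (2,2,2), (2,2)$, each listed minimal polynomial is a quotient of $x^m-1$ by a factor not involving $(x-1)$, so $p(1)=0$ in each instance. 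Finally, in the default (non-exceptional) case of Theorem~\ref{theorem:Main_min} we have $p(x)=x^n-1$, which of course vanishes at $1$.

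Collecting these observations, $1$ fails to be an eigenvalue of $\rho_\lambda(w_n)$ precisely in the three families listed, establishing the theorem. Since all the real work has already been absorbed into Theorem~\ref{theorem:Main_min}, there is no substantive obstacle here; the only point requiring a little care is the sign $(-1)^{n-1}$ in items $(3)$ and $(4)$ of Theorem~\ref{theorem:Main_min}, which dictates the parity conditions ``$n$ odd'' in the $(2,1^{n-2})$ case and ``$n$ even'' in the $(1^n)$ case appearing in the statement of the conjecture.
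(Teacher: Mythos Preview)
Your proposal is correct and follows exactly the approach the paper intends: the paper does not give a separate proof of this statement but simply observes that it is a special case of Theorem~\ref{theorem:Main_min}, and you have spelled out that specialization in full detail.
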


For an element $g$ in a group $G$, we say that $g$ has a nonzero invariant vector in an irreducible representation $(\rho,V)$ of $G$ if there exists a nonzero vector $v\in V$ such that $\rho(g)v=v.$

Together with Amrutha P and A. Prasad, we have shown the existence of a nonzero invariant vector when $G$ is the symmetric group, $g$ is any permutation and $V$ is any irreducible representation of $S_n$.
\begin{theorem}\cite[Main theorem]{Inv_vectors}\label{theorem:inv_vec_sym}
    Let $\lambda,\mu$ be partitions of $n$. Then $w_\mu$ has a nonzero invariant vector in $V_\lambda$ (equivalent to $1$ is an eigenvalue of $\rho_\lambda(w_\mu)$) except in the following cases:
    \begin{enumerate}
			\item $\lambda=(1^n)$, $\mu$ is any partition of $n$ for which $w_\mu\notin A_n$,
			\item $\lambda=(n-1,1)$, $\mu=(n)$, $n\geq 2$,
			\item $\lambda=(2,1^{n-2})$, $\mu=(n)$, $n\geq 3$ is odd,
			\item $\lambda=(2^2,1^{n-4})$, $\mu=(n-2,2)$, $n\geq 5$ is odd,
			\item $\lambda=(2,2)$, $\mu=(3,1)$,
			\item $\lambda=(2^3)$, $\mu=(3,2,1)$,
			\item $\lambda=(2^4)$, $\mu=(5,3)$,
			\item $\lambda=(4,4)$, $\mu=(5,3)$,
			\item $\lambda=(2^5)$, $\mu=(5,3,2)$.
		\end{enumerate}
\end{theorem}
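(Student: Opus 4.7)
The plan is to combine the main minimal-polynomial theorem of this paper, Theorem~\ref{Theorem:main}, with a Littlewood--Richardson branching induction that handles arbitrary cycle types.

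First, for $\mu$ whose every part divides its largest, Theorem~\ref{Theorem:main} writes down the minimal polynomial of $\rho_\lambda(w_\mu)$ exactly. A nonzero invariant vector exists in $V_\lambda$ if and only if $(x-1)$ is a factor of this polynomial; scanning each case yields failure precisely in case~(2) of Theorem~\ref{Theorem:main}, case~(3) with $n$ odd, case~(4) with $n-\mathrm{len}(\mu)$ odd (equivalently $w_\mu \notin A_n$), and case~(8). These account for exceptions~(2), (3), (5), and the $w_\mu \notin A_n$ portion of exception~(1) in Theorem~\ref{theorem:inv_vec_sym}.

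For $\mu$ with some part not dividing the largest, I would induct on the number of parts of $\mu$. Pick such a part $l$, write $\mu$ as the multiset union of $\mu'$ and $\{l\}$, and factor $w_\mu = w_{\mu'}\, c$ where $c$ is an $l$-cycle disjoint from the support of $w_{\mu'}$. Restricting to $S_{n-l}\times S_l$ and applying the Littlewood--Richardson rule,
\begin{displaymath}
\dim V_\lambda^{\langle w_\mu\rangle} \;=\; \sum_{\nu \vdash n-l,\ \tau \vdash l} c^\lambda_{\nu\tau}\, \dim\bigl(V_\nu \otimes V_\tau\bigr)^{\langle w_\mu\rangle}.
\end{displaymath}
Because $w_\mu$ acts diagonally as $(w_{\mu'}, c)$, the tensor invariants have positive dimension as soon as there exist eigenvalues $\alpha$ of $\rho_\nu(w_{\mu'})$ and $\beta$ of $\rho_\tau(c)$ with $\alpha\beta = 1$. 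The spectrum of the $l$-cycle $\rho_\tau(c)$ is pinned down by Theorem~\ref{theorem:Main_min}, and the spectrum of $\rho_\nu(w_{\mu'})$ is controlled by the inductive hypothesis together with Theorem~\ref{Theorem:main} applied to $\mu'$, giving a concrete combinatorial test.

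The hardest part will be matching the exception list exactly: for each $(\lambda,\mu)$ outside the stated list I must exhibit a compatible quadruple $(\nu, \tau, \alpha, \beta)$, and conversely show in each of the genuine new exceptions (4), (6), (7), (8), (9) that no such quadruple can exist. These are all small, tight cases where $V_\lambda\big|_{S_{n-l}\times S_l}$ has only a handful of Littlewood--Richardson components, and I would verify them directly by computing $\tfrac{1}{o(w_\mu)}\sum_{k=0}^{o(w_\mu)-1} \chi_\lambda(w_\mu^k)$ via the Murnaghan--Nakayama rule. For the generic step, choosing $l$ to be the smallest part not dividing the largest should keep the inductive bookkeeping uniform and maximise the supply of usable components $(\nu,\tau)$ with compatible eigenvalues.
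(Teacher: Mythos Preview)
Your proposal has a circularity problem. In this paper, Theorem~\ref{Theorem:main} is proved in Section~\ref{section:Proof of Theorem Theorem:main}, and that proof \emph{uses} Theorem~\ref{theorem:inv_vec_sym} as an input: the opening line of the argument is ``We know that $\rho_\lambda(w_\mu)$ has eigenvalue $1$ except for the pairs $(\lambda,\mu)$ given in Theorem~\ref{theorem:inv_vec_sym},'' and Theorem~\ref{theorem:inv_vec_sym} is invoked again via Corollary~\ref{corollary:eigen_value_sgn} when treating the eigenvalue $-1$. So you cannot appeal to Theorem~\ref{Theorem:main} to establish Theorem~\ref{theorem:inv_vec_sym}. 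The paper does not give a self-contained proof of Theorem~\ref{theorem:inv_vec_sym}; it is quoted from~\cite{Inv_vectors}, and the paper's contribution here is only to reprove the base case, Theorem~\ref{theorem:Main_min} (the $n$-cycle case). That is the one result you may legitimately feed into a proof of Theorem~\ref{theorem:inv_vec_sym}.

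There is a second, independent gap in your inductive step. Your inductive hypothesis is Theorem~\ref{theorem:inv_vec_sym} for $\mu'$ with fewer parts, which tells you only whether $1$ is an eigenvalue of $\rho_\nu(w_{\mu'})$, not its full spectrum. To exploit the condition $\alpha\beta=1$ with $\alpha\neq 1$ you would need spectral information about $w_{\mu'}$ that neither the inductive hypothesis nor Theorem~\ref{Theorem:main} (even if it were available) provides for a general $\mu'$. In practice you are forced to take $\alpha=\beta=1$, i.e.\ to produce a pair $(\nu,\tau)$ with $c^\lambda_{\nu\tau}>0$ such that $w_{\mu'}$ has an invariant vector in $V_\nu$ and the $l$-cycle has one in $V_\tau$. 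Making that work uniformly, and isolating exactly the exceptional list, is the substantive combinatorial content of~\cite{Inv_vectors}; your sketch does not yet supply it.
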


The proof is combinatorial, except for the base case, which is Theorem~\ref{theorem:Main_min}.
By proving Theorem~\ref{theorem:Main_min} using representation theory and combinatorics, we complete the proof of Theorem~\ref{theorem:inv_vec_sym} within the same framework.

Similarly, for alternating groups, we proved, with Amrutha P and A. Prasad, the following result:
\begin{theorem}\cite[Theorem C]{p2024cyclic} \label{Inv_vectors_Alternating}
     Let $\mu$ be a partition of $n$. Then $1$ is an eigenvalue of $w_\mu$ in an irreducible representation $V$ of the alternating group $ A_n$ except in the following cases.
    \begin{enumerate}
        \item $V=V_{(n-1,1)}$ and  $\mu=(n)$ with $n>3$ is odd.
        \item $V=V_{(2,1)}^\pm$ and $\mu=(3)$,
        \item $V=V_{(2,2)}^\pm$ and $\mu=(3,1)$,
        \item $V=V_{(4,4)}$ and $\mu=(5,3)$.
    \end{enumerate}
\end{theorem}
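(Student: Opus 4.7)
The strategy is to deduce Theorem~\ref{Inv_vectors_Alternating} from its symmetric-group analogue, Theorem~\ref{theorem:inv_vec_sym}, using the standard dictionary between irreducible representations of $A_n$ and $S_n$. Recall that for $\lambda \vdash n$ with $\lambda \neq \lambda'$ the restriction $V_\lambda\big|_{A_n}$ is irreducible and isomorphic to $V_{\lambda'}\big|_{A_n}$, while for $\lambda = \lambda'$ one has $V_\lambda\big|_{A_n} = V_\lambda^+ \oplus V_\lambda^-$ with $V_\lambda^\pm$ non-isomorphic irreducibles of equal dimension; every irreducible representation of $A_n$ arises in exactly one of these two ways. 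The proof naturally splits along this dichotomy.

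In the non-self-conjugate case ($\lambda \neq \lambda'$), the operator $\rho_\lambda(w_\mu)$ is the same linear map on $V_\lambda\big|_{A_n}$ as on the ambient $S_n$-module $V_\lambda$; hence $1$ is an eigenvalue on the former iff it is an eigenvalue on the latter. I would therefore run through the nine exceptions of Theorem~\ref{theorem:inv_vec_sym} and retain only those with $w_\mu \in A_n$ and $\lambda \neq \lambda'$. Items~(1), (4), (6) and~(9) are discarded since their $w_\mu$ lies outside $A_n$. Items~(2) and~(3) both describe the module $V_{(n-1,1)}\big|_{A_n} = V_{(2,1^{n-2})}\big|_{A_n}$ at $\mu = (n)$; when $n \geq 5$ is odd this yields item~1 of the theorem, while the case $n = 3$ is routed into the self-conjugate case since $(2,1) = (2,1)'$. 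Items~(7) and~(8) both describe $V_{(4,4)}\big|_{A_8}$ at $\mu = (5,3)$, yielding item~4. Item~(5) involves the self-conjugate partition $(2,2)$ and is deferred.

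In the self-conjugate case ($\lambda = \lambda'$), the multiplicity of $1$ as an eigenvalue of $w_\mu$ on $V_\lambda^\pm$ is
\begin{equation*}
\dim (V_\lambda^\pm)^{w_\mu} \;=\; \frac{1}{o(w_\mu)} \sum_{k=0}^{o(w_\mu)-1} \chi_\lambda^\pm(w_\mu^k).
\end{equation*}
For a power $w_\mu^k$ whose $S_n$-cycle type has a repeated part or an even part, the $S_n$-conjugacy class does not split in $A_n$ and $\chi_\lambda^\pm(w_\mu^k) = \tfrac12 \chi_\lambda(w_\mu^k)$. For a power whose cycle type consists of distinct odd parts, the $S_n$-class does split and $\chi_\lambda^\pm(w_\mu^k)$ is given by the classical Frobenius formula in terms of those cycle lengths. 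Combining both contributions yields a closed expression for $\dim (V_\lambda^\pm)^{w_\mu}$. The small cases $\lambda = (2,1)$ at $n = 3$ and $\lambda = (2,2)$ at $n = 4$, where $V_\lambda^\pm$ is one-dimensional, can be read off directly from the character tables of $A_3$ and $A_4$ and yield items~2 and~3 of the theorem.

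The main obstacle is ruling out further exceptions among self-conjugate $\lambda$ beyond $(2,1)$ and $(2,2)$. The natural route is to first invoke Theorem~\ref{theorem:inv_vec_sym} to guarantee that $\dim V_\lambda^{w_\mu} = \dim (V_\lambda^+)^{w_\mu} + \dim (V_\lambda^-)^{w_\mu}$ is already positive for all admissible $(\lambda,\mu)$, and then to bound the correction distinguishing $V_\lambda^+$ from $V_\lambda^-$---which is controlled by the split $A_n$-classes lying inside $\langle w_\mu \rangle$---strictly below $\dim V_\lambda^{w_\mu}$, so that both halves retain at least one $1$-eigenvector. A finite residue of low-dimensional cases would then be checked by hand.
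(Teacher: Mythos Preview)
The paper does not actually give a self-contained proof of this theorem: it is quoted from \cite{p2024cyclic}, and the surrounding text only says that the proof there is ``combinatorial except the base case'', the base case being Theorem~\ref{theorem:Main_2_min_alternating} (the $n$-cycle on $A_n$). The contribution of the present paper is to supply that base case, and there the delicate self-conjugate situation $\lambda=\phi((n))$ is handled not by a character estimate but by an explicit induction from $A_m^{\times p}$ via Proposition~\ref{proposition:Giannelli}. So there is no line-by-line comparison to make; what one can compare is your overall strategy against the inductive, Littlewood--Richardson style arguments the paper uses throughout.

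Your reduction in the non-self-conjugate case is correct and is exactly the obvious one: filter the list in Theorem~\ref{theorem:inv_vec_sym} by $w_\mu\in A_n$ and identify $\lambda$ with $\lambda'$. This cleanly produces items~1 and~4, and correctly routes $(2,1)$ and $(2,2)$ into the self-conjugate case, where direct inspection of $A_3$ and $A_4$ gives items~2 and~3.

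The genuine gap is in the remaining self-conjugate case. You need, for every self-conjugate $\lambda$ outside the short list and every $\mu$ with $w_\mu\in A_n$, that \emph{both} $\dim(V_\lambda^{+})^{w_\mu}$ and $\dim(V_\lambda^{-})^{w_\mu}$ are positive. Theorem~\ref{theorem:inv_vec_sym} only gives $\dim V_\lambda^{w_\mu}\ge 1$, with no quantitative lower bound, so the inequality $\bigl|\dim(V_\lambda^{+})^{w_\mu}-\dim(V_\lambda^{-})^{w_\mu}\bigr|<\dim V_\lambda^{w_\mu}$ that you propose is not available from what you have invoked. The correction term is governed by those powers $w_\mu^{k}$ whose cycle type $\nu$ lies in $DOP_n$ with $\phi(\nu)=\lambda$, and each such term contributes $\pm\sqrt{\prod_i\nu_i}$; you give no mechanism for bounding this against the invariant dimension, nor any argument that only finitely many $(\lambda,\mu)$ survive. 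When $\mu\notin DOP_n$ there is a clean bypass (conjugation by an odd element centralising $w_\mu$ intertwines $\rho_\lambda^{+}(w_\mu)$ and $\rho_\lambda^{-}(w_\mu)$, forcing equal multiplicities), but for $\mu\in DOP_n$---already for $\mu=(n)$ with $\lambda$ the self-conjugate hook---your outline does not close, and this is precisely where the paper resorts to a different, constructive method.
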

The proof is combinatorial except the base case which is the following Theorem~\ref{theorem:Main_2_min_alternating} which is a special case of \ref{Theorem:Main_Alt}.
By proving Theorem~\ref{theorem:Main_2_min_alternating} using representation theory and combinatorics, we complete the proof of Theorem~\ref{Inv_vectors_Alternating} within the same framework.
\begin{theorem}\label{theorem:Main_2_min_alternating}
    Let $n$ be a positive odd integer. Then the minimal polynomial $p(x)$ of $w_n$ in an irreducible representation $V$ of the alternating group $ A_n$ has degree less than $n$ in precisely the following cases.
    \begin{enumerate}
        \item $V=V_{(n)}$, where $p(x)=x-1$,
        \item $V=V_{(n-1,1)}$, where $p(x)=\tfrac{x^n-1}{x-1}$,
        \item $V=V_{(3,1,1)}^\pm$, where $p(x)=\tfrac{x^n-1}{(x-\zeta)(x-\Bar{\zeta})}$, where $\zeta$ is a primitive $5$th root of unity.
    \end{enumerate}
\end{theorem}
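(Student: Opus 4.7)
The plan is to derive Theorem~\ref{theorem:Main_2_min_alternating} as a direct specialization of the main Theorem~\ref{Theorem:Main_Alt} to the case $\mu=(n)$ with $n$ odd. First, since $n-1$ is even, the $n$-cycle $w_n$ is a product of an even number of transpositions and hence lies in $A_n$. Moreover $(n)$ is a partition into distinct odd parts, so the $S_n$-conjugacy class of $w_n$ splits into two $A_n$-classes $(n)^+$ and $(n)^-$, and $w_n$ lies in one of them.

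Next I would scan the nine clauses of Theorem~\ref{Theorem:Main_Alt} and keep only those compatible with $\mu=(n)$. Clauses~(5)--(7) have $\mu\in\{(3,1),(2,2)\}$ and are immediately discarded. The survivors are: clause~(1) giving $V=V_{(n)}$ with $p(x)=x-1$; clause~(2) giving $V=V_{(n-1,1)}$ for $n\geq 5$ odd with $p(x)=(x^n-1)/(x-1)$; and clauses~(3) and~(4) together, which cover the four combinations $(V_{(3,1,1)}^\pm,\mu=(5)^\pm)$ at $n=5$. In clauses~(3) and~(4) the omitted pair of eigenvalues is $\{\zeta^2,\zeta^3\}$ or $\{\zeta,\zeta^4\}$ respectively, and both pairs have the form $\{\eta,\bar\eta\}$ for some primitive 5th root of unity $\eta$. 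Together they collapse into a single entry $V=V_{(3,1,1)}^\pm$ with $p(x)=(x^5-1)/((x-\zeta)(x-\bar\zeta))$. These three survivors match exactly clauses~(1),~(2),~(3) of the target, while every other irreducible $A_n$-module falls under the generic case of Theorem~\ref{Theorem:Main_Alt} with minimal polynomial $x^n-1$.

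A bookkeeping step remains to convert the $S_n$-labels of Theorem~\ref{Theorem:Main_Alt} into $A_n$-labels. For non-self-conjugate $\lambda$, the $A_n$-modules $V_\lambda|_{A_n}$ and $V_{\lambda'}|_{A_n}$ are isomorphic and irreducible; hence the $S_n$-pair $\{(n),(1^n)\}$ collapses to the trivial $A_n$-rep (absorbed into clause~(1)) and the pair $\{(n-1,1),(2,1^{n-2})\}$ collapses to the standard $A_n$-rep (absorbed into clause~(2)). For self-conjugate $\lambda$, $V_\lambda$ splits into two irreducible $A_n$-summands $V_\lambda^\pm$; among the exceptional partitions of Theorem~\ref{Theorem:Main_Alt} for $\mu=(n)$, only $\lambda=(3,1,1)$ (at $n=5$) is self-conjugate, producing clause~(3).

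Given Theorem~\ref{Theorem:Main_Alt}, this derivation presents no real obstacle beyond careful bookkeeping. The genuine content lives inside Theorem~\ref{Theorem:Main_Alt} itself. Were one instead to prove Theorem~\ref{theorem:Main_2_min_alternating} directly, the hard part would be the $\lambda=(3,1,1)$ case at $n=5$: computing the eigenvalues of $w_5$ on each of the two 3-dimensional summands $V_{(3,1,1)}^\pm$ from the $A_5$ character table, and matching the sign on $V$ with the sign on the split class $(5)^\pm$ to identify which primitive 5th root of unity is omitted in each of the four combinations.
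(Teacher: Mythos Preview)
Your derivation is circular in the paper's logical structure. The proof of Theorem~\ref{Theorem:Main_Alt} begins with ``If the length of $\mu$ is one, then the theorem follows from Theorem~\ref{theorem:Main_2_min_alternating}.'' Thus the $\mu=(n)$ case of Theorem~\ref{Theorem:Main_Alt}, which is exactly what you want to specialize, is established \emph{by invoking} Theorem~\ref{theorem:Main_2_min_alternating}. Your proposal therefore assumes what it is trying to prove.

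The paper's actual proof of Theorem~\ref{theorem:Main_2_min_alternating} works instead from the symmetric-group result, Theorem~\ref{theorem:Main_min}. For an irreducible $A_n$-module $V$ that is \emph{not} one of the two summands $V_\lambda^\pm$ with $\lambda=\phi((n))$ the self-conjugate hook $(\tfrac{n+1}{2},1^{(n-1)/2})$, the character satisfies $\chi_V(w_n^i)=\chi_\lambda(w_n^i)$ or $\tfrac12\chi_\lambda(w_n^i)$ for all $i$; hence a root of unity is an eigenvalue of $w_n$ on $V$ if and only if it is one on $V_\lambda$, and Theorem~\ref{theorem:Main_min} applies. The only case requiring genuinely new work is the self-conjugate hook, where $\chi_\lambda^\pm(w_n)$ involves the irrationality $\sqrt{\epsilon_\mu n}$. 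Small $n$ are checked directly; for large prime $n$ a trace argument suffices; and for large composite $n=mp$ the paper uses the wreath-product embedding $C_m\wr C_p\subset A_n$ together with Proposition~\ref{proposition:Giannelli}, choosing $\mu^1=(m-2,1,1)$ and $\mu^2=\dotsb=\mu^p=(m)$. This is the content you would have to supply to break the circle.
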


Finally, we prove the counterpart of Theorem~\ref{theorem:inv_vec_sym}, which is of independent interest.
\begin{theorem}\label{theorem:eigen_value_negative_universal}
    Suppose that $w_\mu$ has even order. Then there is a nonzero vector $v$ in $V_\lambda$ such that $\rho_\lambda(w_\mu)v=-v$ (equivalently, $-1$ is a eigenvalue of $\rho_\lambda(w_\mu)$) except in the following cases.
    \begin{enumerate}
        \item $\lambda=(n)$,
        \item $\lambda=(n-2,2)$ and $\mu=(n-2,2)$ with $n\geq 5$ odd,
        \item $\lambda=(2,1^{n-2})$ and $\mu=(n)$ with $n\geq 4$ even,
        \item $\lambda=(1^n)$ and $\mu$ is such that $w_\mu$ is an even permutation,
        \item $\lambda=(2,2)$ and $\mu=(2,2)$.
        \item $\lambda=(3,3)$ and $\mu=(3,2,1)$,
        \item $\lambda=(5,5)$ and $\mu=(5,3,2)$.
    \end{enumerate}
\end{theorem}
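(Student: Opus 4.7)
My plan is to split the analysis according to the parity of $w_\mu$. The first observation is the standard sign twist $V_{\lambda'}\cong V_\lambda\otimes\sgn$, which in compatible bases gives $\rho_\lambda(g)=\sgn(g)\,\rho_{\lambda'}(g)$. When $w_\mu$ is an odd permutation this reads $\rho_\lambda(w_\mu)=-\rho_{\lambda'}(w_\mu)$, so $-1$ is an eigenvalue of $\rho_\lambda(w_\mu)$ if and only if $1$ is an eigenvalue of $\rho_{\lambda'}(w_\mu)$. The odd case therefore reduces directly to Theorem~\ref{theorem:inv_vec_sym} applied to $\lambda'$. Intersecting its exception list with the condition that $w_\mu$ is odd and of even order recovers precisely items~1, 2, 3, 6, 7 of the present statement; the remaining exceptions of Theorem~\ref{theorem:inv_vec_sym}, namely items~3, 5, 7, and 8, are discarded either because $w_\mu$ is an even permutation ($n$-cycle with $n$ odd, a $3$-cycle, or a product of two odd cycles) or because $w_\mu$ has odd order (for $\mu=(5,3)$).

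This leaves the case of $w_\mu$ even and of even order, which forces $\mu$ to contain at least two even parts. Three of the listed exceptions then fall out immediately from character values: item~1 ($\lambda=(n)$, trivial representation, every eigenvalue is $1$), item~4 ($\lambda=(1^n)$, sign representation, which sends every even permutation to $1$), and item~5 ($\lambda=(2,2)$, $\mu=(2,2)$, where $\chi_{(2,2)}((2,2))=2=\dim V_{(2,2)}$ forces $\rho_{(2,2)}(w_{(2,2)})=I$).

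For all remaining pairs $(\lambda,\mu)$ with $w_\mu$ even of even order, I would produce a $(-1)$-eigenvector by a branching argument in the spirit of the proof of Theorem~\ref{theorem:inv_vec_sym}. Pick an even part $a$ of $\mu$ and write $\mu=(a)\cup\mu''$. Restricting $V_\lambda$ to the Young subgroup $S_a\times S_{n-a}$ and using the Littlewood--Richardson rule, it suffices to find an irreducible constituent $V_\alpha\otimes V_\beta$ (with $\alpha\vdash a$, $\beta\vdash n-a$) for which $\rho_\alpha(w_a)$ admits a $(-1)$-eigenvector and $\rho_\beta(w_{\mu''})$ admits a $(+1)$-eigenvector; tensoring the two eigenvectors gives a $(-1)$-eigenvector of $\rho_\lambda(w_\mu)$ inside $V_\lambda$. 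The first condition is controlled completely by Theorem~\ref{theorem:Main_min} (since $a$ is even, $-1$ is a root of the minimal polynomial of $\rho_\alpha(w_a)$ outside the short list of exceptional shapes recorded there), and the second by Theorem~\ref{theorem:inv_vec_sym}.

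The main obstacle is the combinatorial verification in the last step: one must show that for every $(\lambda,\mu)$ outside the explicit exception list of the present theorem, some choice of even part $a$ together with some Littlewood--Richardson constituent $(\alpha,\beta)$ simultaneously avoids the exception lists of Theorems~\ref{theorem:Main_min} and~\ref{theorem:inv_vec_sym}. When a single even part is not enough flexibility, one can instead restrict to a Young subgroup $S_a\times S_b\times S_{n-a-b}$ for two even parts $a,b$ of $\mu$, trading flexibility in $(\alpha,\beta)$ for finer constraints that handle the remaining tight cases. The delicate small-rank situations, where the Littlewood--Richardson coefficients are small and the two exception lists interact, will require direct character computations, and carving off precisely the correct exception list is the chief difficulty.
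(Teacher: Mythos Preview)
Your odd-permutation case is identical to the paper's: the sign-twist reduction is precisely Corollary~\ref{corollary:eigen_value_sgn}, and the exception bookkeeping you outline matches. In the even-permutation case the overall architecture is the same (at least two even parts, then Littlewood--Richardson branching to produce a $(-1)$-eigenvector, with small cases handled directly), but the organization differs slightly. You isolate a \emph{single} even part $a$ and invoke Theorem~\ref{theorem:Main_min} on $S_a$ together with Theorem~\ref{theorem:inv_vec_sym} on the remaining $S_{n-a}$-factor. The paper instead first isolates \emph{two} parts $(\mu_1,\mu_i)$ with $\mu_i$ even, proves a dedicated two-part base case (Lemma~\ref{lemma:eigen_value_-1}) showing $\ch\Ind_{C_{(\mu_1,\mu_i)}}^{S_{\mu_1+\mu_i}}(-\mathbf 1)\geq s_\alpha$ for all $\alpha\ne(\mu_1+\mu_i),(1^{\mu_1+\mu_i})$, and then adjoins the remaining parts one at a time via Corollary~\ref{corollary:Yang_stareletov}. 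Your route is a bit more economical in that it reuses Theorem~\ref{theorem:inv_vec_sym} wholesale rather than rebuilding it through iteration; the paper's route buys extra freedom in choosing the first LR-factor $\alpha\vdash\mu_1+\mu_i$, which streamlines the awkward case $\mu_1\le 3$ (where every even part equals $2$ and your choice $\alpha\vdash 2$ is forced to be $(1,1)$). Either way, the residual combinatorial case analysis you flag as ``the chief difficulty'' is exactly where the work lies in both arguments.
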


The analogous theorem for the alternating group is as follows:
\begin{theorem}\label{theorem:eigen_value_negative_universal_Alt}
    Let $w_\mu$ be an element in the alternating group $A_n$ with cycle type $\mu$ and suppose that $w_\mu$ has an even order. Then for every non-linear irreducible representation $(\rho,V)$, there exists a nonzero vector $v$ in $V$ such that $\rho(w_\mu)v=-v$ except when $V=V_{(2,2)}^\pm$.
\end{theorem}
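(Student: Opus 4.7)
The plan is to reduce Theorem~\ref{theorem:eigen_value_negative_universal_Alt} to its symmetric-group counterpart, Theorem~\ref{theorem:eigen_value_negative_universal}, via the restriction from $S_n$ to $A_n$. Every non-trivial irreducible representation of $A_n$ arises either as a restriction $V_\lambda|_{A_n}$ with $\lambda$ non-self-conjugate and $\lambda \notin \{(n),(1^n)\}$, or as one of the two constituents $V_\lambda^+, V_\lambda^-$ in the splitting $V_\lambda|_{A_n} = V_\lambda^+ \oplus V_\lambda^-$ when $\lambda = \lambda'$.

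For non-self-conjugate $\lambda$, an eigenvector of $\rho_\lambda(w_\mu)$ with eigenvalue $-1$ in $V_\lambda$ is automatically such an eigenvector in $V_\lambda|_{A_n}$, so the conclusion is immediate from Theorem~\ref{theorem:eigen_value_negative_universal} unless $(\lambda,\mu)$ appears in its exception list. A direct check shows that every non-self-conjugate exception there has $w_\mu$ an odd permutation: the cycle types $(n-2,2)$ with $n$ odd, $(n)$ with $n$ even, $(3,2,1)$, and $(5,3,2)$ all yield odd permutations and hence do not lie in $A_n$. So no such case contributes.

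For the self-conjugate case $\lambda = \lambda'$, I would use the classical fact that an $S_n$-conjugacy class contained in $A_n$ splits into two $A_n$-classes exactly when its cycle type has all distinct odd parts, and that on non-split classes one has $\chi_{V_\lambda^+} = \chi_{V_\lambda^-} = \tfrac12 \chi_{V_\lambda}$. The key sub-lemma is: if $w_\mu$ has even order then no power $w_\mu^k$ lies in a split $A_n$-class. Since $\lcm(\mu_i)$ is even, $\mu$ contains an even part $\mu_i = 2^a b$ with $a \geq 1$ and $b$ odd. The $\mu_i$-cycle contributes $\gcd(\mu_i,k)$ cycles of length $\mu_i/\gcd(\mu_i,k)$ to $w_\mu^k$; this length is odd only when $2^a \mid \gcd(\mu_i,k)$, and in that case the multiplicity is at least $2^a \geq 2$, forcing a repeated part. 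Either way, the cycle type of $w_\mu^k$ contains a repeated part or an even part, so it is not of split type.

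Combining the sub-lemma with the projection formula for the $-1$-eigenspace yields
\begin{equation*}
\dim E_{-1}\bigl(\rho|_{V_\lambda^\pm}(w_\mu)\bigr) \;=\; \tfrac{1}{2}\,\dim E_{-1}\bigl(\rho_\lambda(w_\mu)\bigr),
\end{equation*}
so Theorem~\ref{theorem:eigen_value_negative_universal} provides a $-1$ eigenvector on each of $V_\lambda^+$ and $V_\lambda^-$ whenever it does so on $V_\lambda$. The only exception with $\lambda$ self-conjugate, $w_\mu\in A_n$, and $w_\mu$ of even order is $(\lambda,\mu)=((2,2),(2,2))$, which is precisely the exclusion $V=V_{(2,2)}^\pm$ in the statement. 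The main obstacle is establishing the cycle-type sub-lemma; once it is in hand, the remainder reduces to routine character arithmetic combined with inspection of the finite exception list of Theorem~\ref{theorem:eigen_value_negative_universal}.
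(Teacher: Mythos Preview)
Your proposal is correct and follows essentially the same route as the paper: reduce to Theorem~\ref{theorem:eigen_value_negative_universal} via restriction from $S_n$ to $A_n$, treating the non-self-conjugate and self-conjugate cases separately and using the halving of character values in the latter. You are in fact more thorough than the paper, which simply asserts $\chi_\lambda^\pm(w_\mu^i)=\tfrac12\chi_\lambda(w_\mu^i)$ for all $i$ without justification; your cycle-type sub-lemma supplies exactly that missing step (no power of an even-order $w_\mu$ has distinct odd cycle type), and your explicit parity check of the exception list makes the non-self-conjugate case airtight.
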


%Our main theorem~\ref{Theorem:main} yields that the positivity of the so called "multi major index" which counts the following:

The article is organized as follows: In Sect.~\ref{section:Notation and preliminaries}, we introduce the notation and give basic definitions. In Sect.~\ref{section:Ingredients for the proof of the main theorems}, we provide some important lemmas and a proposition to prove Theorem~\ref{theorem:Main_min}. In Sect.~\ref{section:Proof of Thorem theorem:Main_min}, we prove Theorem~\ref{theorem:Main_min}. In Sect.~\ref{section:Proof of Theorem Theorem:main}, we prove Theorems \ref{Theorem:main}, \ref{theorem:eigen_value_negative_universal}, \ref{theorem:eigen_value_negative_universal_Alt}.
    
\section{Notation and Preliminaries}\label{section:Notation and preliminaries}
Let $G$ be a finite group and $H$ be its subgroup.
Let $Irr(G)$ denote the set of all irreducible characters of $G$.
For an irreducible character $\Psi$ of a subgroup $H$, let $Irr(G|\Psi)=\{\chi\in Irr(G)|\langle \Res^G_H \chi,\Psi\rangle_H\neq 0\}$.
Let $(\chi,V)$ and $(\Psi,W)$ be representations of $G$.
If $(\chi,V)$ is a sub-representation of $(\Psi,W)$, then we denote this by $\chi\leq\Psi$ or $V\leq W$.

For a composition $\alpha$ of $n$, we define a natural permutation $w_\alpha$ of $[n]$ as follows:
\begin{displaymath}
    w_\alpha=(1~2~\dotsc~\alpha_1)(\alpha_1+1~\alpha_1+2~\dotsc~\alpha_1+\alpha_2)\dotsc(\sum_{i=1}^{k-1}\alpha_i+1 ~\sum_{i=1}^{k-1}\alpha_i+2~\dotsc~\sum_{i=1}^{k}\alpha_i=n),
\end{displaymath}
where $k$ is the length of $\alpha$.
Let $C_\alpha$ denote the cyclic subgroup generated by $w_\alpha$. When $\alpha=(n)$, we write $C_n$ instead of $C_{(n)}$.
Note that the cycle type of $w_\alpha$ is the partition obtained from $\alpha$ by reordering the parts of $\alpha$ in decreasing order. Hence $C_\alpha$ and $C_\mu$ are conjugate subgroups in the symmetric group $S_n$.

We make frequent use of the following observation.
Let $\delta$ be an irreducible character of $C_\alpha$ and $\tilde\delta$ be an irreducible character of $C_{\alpha_1}\times\dotsc\times C_{\alpha_k}$ such that $Res^{C_{\alpha_1}\times\dotsc\times C_{\alpha_k}}_{C_\alpha}\tilde\delta=\delta$.
Then
\begin{displaymath}
\Ind_{C_\alpha}^{S_n} \delta\geq \Ind_{C_{ \alpha_1}\times C_{ \alpha_2}\times \dotsc \times C_{ \alpha_k}}^{S_n}\tilde\delta\geq \Ind_{S_{ \alpha_1}\times S_{ \alpha_2}\times \dotsc \times S_{ \alpha_k}}^{S_n}\Ind_{C_{ \alpha_1}\times C_{ \alpha_2}\times \dotsc \times C_{ \alpha_k}}^{S_{ \alpha_1}\times S_{ \alpha_2}\times \dotsc \times S_{\alpha_k}}\tilde\delta, 
\end{displaymath} 

The representations of the symmetric group $S_n$ are indexed by partitions of $n$.
We denote the irreducible representation of $S_n$ indexed by the partition $\lambda$ by $(\rho_\lambda,V_\lambda)$ and its character by $\chi_\lambda$. For specific character values we use the recursive Murnaghan-Nakayama rule.
We shall introduce some notation before defining the rule.
We always use the English notation for the Young diagrams of partitions.
For a cell $b$ in a Young diagram of a partition $\lambda$, the hook length of $b$ is one plus the number of cells in the Young diagram which lie directly below it or directly to its right.
For a cell $b$ of the Young diagram of the partition $\lambda$, the rim hook ($\text{rim}_b$) of the cell $b=(i,j)$ is the set of cells $(u,v)$ such that $u\geq i, ~v\geq j$ and the Young diagram of $\lambda$ does not contain a cell in the position $(i+1,j+1)$. The height ($\text{ht}(\text{rimb}_b)$) of $\text{rim}_b$ is one minus the number of rows of $\lambda$ which it intersects.

Fix any part of $\mu$ (say $\mu_t$).
The fastest way of computing these character values is using the recursive Murnaghan-Nakayama rule, which is as follows:
\begin{displaymath}
    \chi_\lambda(w_\mu)=\sum\limits_{b\in \lambda \text{ with } h_b=\mu_t} (-1)^{\text{ht}(\text{rim}_b)}\chi_{\lambda-\text{rim}_b}(w_{\tilde\mu}),
\end{displaymath}
where $b$ varies over the cells $b$ of the Young diagram of $\lambda$ such that hook length of the cell $b$ is equal to $\mu_t$ and $\lambda-\text{rim}_b$ is the partition of shape obtained from the shape of $\lambda$ by removing the cells of $\text{rim}_b$.

Let us discuss the relation between the characters of symmetric groups and the symmetric functions.
For more details and proofs see~\cite{Mac_sym},\cite{EC_2_Stanley}.
Let $\Lambda$ denote the ring of symmetric functions in the variables $x_1,x_2,\dotsc$.
For a partition of $\lambda$, let $p_\lambda$, $s_\lambda$ denote the power sum and Schur symmetric functions respectively. 

%Let $f$ be a formal power series in the indeterminates $x_1,x_2,\dotsc.$
%We say that $f$ is a symmetric function if $f$ is invariant under permuting any two variables $x_i$ and $x_j$.
Let $h$ be a class function of the symmetric group $S_n$, the Frobenius characteristic $\ch$ from the direct sum $\bigoplus_{n=1}^\infty R(S_n)$ of the class functions $R(S_n)$ on $S_n$ is defined as follows:
\begin{align*}
    \ch : \bigoplus_{n\geq 1} R(S_n)\rightarrow \Lambda \\
    g\mapsto\sum_{\mu\vdash n} \tfrac{h(w_\mu)p_\mu}{z_\mu},
\end{align*}
where $z_\mu=\prod_i i^{m_i}m_i!$ with $m_i$ being the number of parts of $\mu$ equal to $i$.
In particular, it is known that the Frobenius characteristic of $\chi_\lambda$ is the Schur function $s_\lambda$.
Then $\ch$ is a ring homomorphism in the following sense:
\begin{displaymath}
    \ch\Ind_{S_m\times S_t}^{S_{m+t}} \chi_\mu\otimes \chi_\nu=s_\mu s_\nu,
\end{displaymath}
for all partitions $\mu\vdash m$ and $\nu\vdash t$.

The RHS can be computed combinatorially by the Littlewood-Richardson rule.
\begin{definition}[Littlewood-Richardson rule]
    Let $\mu$ and $\nu$ be partitions of $m$ and $t$ respectively.
    Then
    \begin{displaymath}
        s_\mu s_\nu=\sum_\lambda c_{\mu\nu}^\lambda s_\lambda,    
    \end{displaymath}
    where $c_{\mu\nu}^\lambda$ is the number of semistandard tableaux of shape $\lambda/\mu$ and type $\nu$ such that the reverse row reading word is a lattice permutation, see \cite[Section I.9]{Mac_sym}. Also, a semistandard tableau of shape $\lambda/\mu$ and type $\nu$ such that the reverse row reading word is a lattice permutation is also called an LR-tableau of shape $\lambda/\alpha$ and type $\nu$.
\end{definition}
For symmetric function $f$ and $g$, we say that $f-g$ is Schur positive if $f-g$ is a nonnegative integer linear combination of Schur functions, equivalently $f-g$ is equal to $ch(h)$ for some character $h$.
%For any two symmetric functions $f$ and $g$, we say that $f\geq g$ if $f-g$ is nonnegative integer linear combination of Schur functions.
Let $f_\mu=\Ind_{C_\mu}^{A_n}\mathbf{1}$.
If $w_\mu$ has even order, then let $g_\mu=\Ind_{C_\mu}^{S_n}-\mathbf{1}$
where $\mathbf{1}$ denotes the trivial character of $C_\mu$ and $\mathbf{-1}$ denotes the linear character obtained by sending $w_\mu$ to $-1$.
Since $C_\mu\leq C_{\mu_1}\times\dotsc\times C_{\mu_k}$, we have
\begin{align*}
    \Ind_{C_\mu}^{S_n} 1 &= \Ind_{C_{\mu_1}\times\dotsc\times C_{\mu_k}}^{S_n} \Ind_{C_\mu}^{C_{\mu_1}\times\dotsc\times C_{\mu_k}} 1 \geq \Ind_{C_{\mu_1}\times\dotsc\times C_{\mu_k}}^{S_n} 1\\
    \Ind_{C_\mu}^{S_n} 1 &\geq \Ind_{C_{\mu_1}\times\dotsc\times C_{\mu_k}}^{S_n} 1.\\
\end{align*}
Therfore, 
\begin{equation}\label{Eq:cyclic_cycles_trivial}
    f_\mu  \geq \prod_{i\geq 1}f_{\mu_i}
\end{equation}

Let $\mu\vdash n$, such that $\mu_i|\mu_1$ for all $i=1,\dotsc,k$.
Let $\eta$ be any irreducible character of $C_\mu$.
We have an irreducible character $\tilde\eta$ for $C_{\mu_1}$, obtained by mapping $w_{\mu_1}$ to $\eta(w_\mu)$.
Then 
\begin{equation}
    \Ind_{C_\mu}^{S_n} \eta \geq \Ind_{C_{\mu_1}\times\dotsc\times C_{\mu_k}}^{S_n} \tilde\eta\times1\times1\dotsc\times1
\end{equation}
Let $\lambda'$ be the partition conjugate to the partition $\lambda$.
Then we have an involution $\omega$ defined by mapping each Schur function $s_\lambda$ to $s_{\lambda'}$. It turns out that $\omega$ is a ring isomorphism.

Let us discuss the representations of the alternating groups $A_n$ briefly.
It is well known that $\Res_{A_n}^{S_n}V_\lambda=\Res_{A_n}^{S_n}V_\lambda'$ is irreducible when $\lambda\neq \lambda'.$
If $\lambda=\lambda'$, then $\Res_{A_n}^{S_n}V_\lambda$ decomposes into two non-isomorphic irreducible representations (denoted by $V_\lambda^+, V_\lambda^-$) of $A_n$. 
For two partitions $\lambda$ and $\mu$, we say that $\lambda$ is greater than $\mu$ lexicographically (denoted by $\lambda>_{rev}\mu$) if the first non-vanishing difference $\lambda_i-\mu_i$ is positive.
Then the irreducible representations of the alternating group $A_n$ are indexed by the following set 
\begin{displaymath}
\{\lambda\vdash n|\lambda\neq\lambda' \text{ and } \lambda >_{rev} \lambda'\}\cup\{\lambda^+,\lambda^-|\lambda\vdash n, \lambda=\lambda'\}.
\end{displaymath}

Let us call the first set $A$ and the second set $B$.

We denote an irreducible representation (resp. character) by $(\rho_\lambda,V_\lambda)$ (resp. $\chi_\lambda$) if it belongs to the set $A$ and by $(\rho_\lambda^\pm,V_\lambda^\pm)$ (resp. $\chi_\lambda^\pm$) if it belongs to  the set $B$.
The character values are given as follows.
Let $DOP_n$ denote the set of all partitions $\mu$ of $n$ which consist of distinct odd parts. For $\mu\in DOP_n$, we define $w_\mu^+=w_\mu$ and $w_\mu^-=ww_\mu w^{-1}$ for some $w\in S_n \setminus A_n$. In this case, we can write $\mu=(2m_1+1,2m_2+1,\dotsc,2m_k+1)$ with $m_1>m_2>\dotsc>m_k$. We define $\phi(\mu)$ to be the self-conjugate partition which is equal to $(m_1,m_2,\dotsc,m_k|m_1,m_2,\dotsc,m_k)$ in Frobenius coordinates.

For the irreducibles in the set $A$, we have $\chi_\lambda(w_\mu),$ which coincides with the
symmetric group case.
For the irreducibles in the set $B$, we have $\chi_\lambda^\pm(w_\mu)=\tfrac{\chi_\lambda(w_\mu)}{2}$ if $\phi(\mu)$ is not equal to $\lambda$, otherwise $\phi(\mu)=\lambda$ and we have $\chi_\lambda^\pm(w_\mu^+)=\tfrac{1}{2}(\epsilon_\mu\pm\sqrt{\epsilon_\mu M})$ where $\epsilon_\mu=(-1)^{\sum_{i}\tfrac{\mu_i-1}{2}}$ and $M=\prod_i \mu_i$.
Also, $\chi_\lambda^\pm(w_\mu^-)=\chi_\lambda^{\mp}(w_\mu^+)$.
For more details on the representations of the alternating groups $A_n$, see \cite{JamesKerber},\cite{Representation_Amri}.

%For a finite group $G$ and permutation group $H\leq S_n$, the wreath product of $G$ and $H$ is defined as $\G\wr H=G^n\rtimes H$ where $H$ acting on $G^n$ by permuting the coordinates.
%We note that the notation are followed in 
%More precisely, $G\wr H:=G^n\rtimes H=\{(f;\pi)|f:[n]\rightarrow G \text{ and } \pi\in H\}$ and 
We mention a few important things regarding wreath products of groups which we use in this paper.
For a subgroup $H\leq S_n$ and $K\in S_m$, we have a natural isomorphism (see~\cite[4.1.18]{JamesKerber}) from $H\wr K$ into $S_{mn}$ as follows:
\begin{displaymath}
    \eta: H\wr K\rightarrow S_{mn}: (f,\pi)\mapsto \left(\begin{array}{@{}*{20}{c@{\,}}} (j-1)m+i
 \\ (\pi(j)-1)m+f(\pi(j))(i)
  \\
\end{array}\right)_{1\leq i\leq m,~ 1\leq j\leq n}.
\end{displaymath}
We now have the following very important observation.
\begin{lemma}\label{lemma:mn-cycle}
Let $C_n\leq S_n$ and $C_m\leq S_m$.
Then $\eta(C_m\wr C_n)\leq S_{mn}$ contains an $mn$-cycle. 
\end{lemma}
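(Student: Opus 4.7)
The plan is to exhibit an explicit element of $C_m \wr C_n$ whose image under $\eta$ is a single $mn$-cycle. Let $\sigma = (1,2,\dotsc,m)$ generate $C_m$ and $\tau = (1,2,\dotsc,n)$ generate $C_n$. I would define $f \colon \{1,\dotsc,n\} \to C_m$ by $f(1) = \sigma$ and $f(j) = \mathrm{id}$ for $j \neq 1$, and set $g := \eta(f,\tau) \in S_{mn}$; this is the candidate I would show is an $mn$-cycle.

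To analyze the iterates, I would reparametrize $[mn]$ as $[n] \times [m]$ via $(j,i) \leftrightarrow (j-1)m + i$. A straightforward induction from the defining formula for $\eta$ then gives
\[
g^k(j,i) \;=\; \bigl(\tau^k(j),\; f(\tau^k(j)) \circ f(\tau^{k-1}(j)) \circ \dotsb \circ f(\tau(j))(i)\bigr).
\]
Specializing to $(j,i) = (1,1)$, observe that $\tau^\ell(1) = 1$ precisely when $n \mid \ell$, so in the composition on the right, the factor $f(1) = \sigma$ appears exactly $\lfloor k/n\rfloor$ times while every other factor is $\mathrm{id}$. Writing $k = an + r$ with $0 \leq r < n$, this would give $g^k(1,1) = (\tau^r(1),\, \sigma^a(1))$.

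The last expression equals $(1,1)$ if and only if $r = 0$ and $m \mid a$, so the smallest such positive $k$ is $mn$; hence the orbit of $(1,1)$ under $g$ has length $mn$. Since $[mn]$ has only $mn$ elements, this orbit exhausts $[mn]$ and $g$ is a single $mn$-cycle, as required. There is no substantial obstacle in the argument: the only potential subtlety would be the order of composition of the $f(\tau^i(j))$ in the formula for $g^k$, but since $f$ is non-identity at a single point, only the total number of $\sigma$-factors matters and the order is irrelevant.
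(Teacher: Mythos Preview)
Your proof is correct and uses exactly the same element as the paper: the paper also takes $\sigma_{mn}=((\,(12\dotsc m),e,\dots,e\,);(12\dotsc n))$ and simply writes down $\eta(\sigma_{mn})$ as an explicit $mn$-cycle, whereas you supply the orbit computation that justifies this. So the approach is the same, with your version giving more detail than the paper's one-line assertion.
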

\begin{proof}
    Let $\sigma_{mn}=( ((123\dots m),e,e\dots,e);(12\dots n) )$.
Then we have
    \begin{equation}
    \eta(\sigma_{mn}) =(1~m+1~2m+1~\dots(n-1)m+1~2~m+2~2m+2~\dots(n-1)m+2~\dots~m~2m~3m\dots nm)    .
    \end{equation}    
\end{proof}
\noindent For the representations of wreath products, see~\cite{JamesKerber}.

Let us make the following useful definition.
\begin{definition}[Character reduced to a cycle]
	Let $\chi$ be an irreducible character of the cyclic group $C_n$. Then for a divisor $k$ of $n$, we define $\chi^{(k)}$ to be the irreducible character of $C_{n/k}$ in a natural way. More precisely, $\chi^{(k)}$ is the irreducible character of $C_{n/k}$ defined by $\chi^{(k)}(w_{n/k}^i)=\chi(w_n^{ki})$ for all $i$.
\end{definition}

The following corollary of Theorem \ref{theorem:inv_vec_sym} will be helpful in the proof of the main theorems.
    \begin{corollary}\label{corollary:eigen_value_sgn}
		The only pairs of partitions $(\lambda,\mu)$ of a given integer $n$ such that $w_\mu$ does not admit a nonzero vector $v$ in $V_\lambda$ such that $\rho_\lambda(w_\mu)v=sgn(w_\mu)v$  are the following:
		\begin{enumerate}
			\item $\lambda=(n)$, $\mu$ is any partition of $n$ for which $w_\mu\notin A_n$,
			\item $\lambda=(2,1^{n-2})$, $\mu=(n)$, $n\geq 2$,
			\item $\lambda=(n-1,1)$, $\mu=(n)$, $n\geq 3$ is odd,
			\item $\lambda=(n-2,2)$, $\mu=(n-2,2)$, $n\geq 5$ is odd,
			\item $\lambda=(2,2)$, $\mu=(3,1)$,
			\item $\lambda=(3,3)$, $\mu=(3,2,1)$,
			\item $\lambda=(4,4)$, $\mu=(5,3)$,
			\item $\lambda=(2^4)$, $\mu=(5,3)$,
			\item $\lambda=(5,5)$, $\mu=(5,3,2)$.
		\end{enumerate}
	\end{corollary}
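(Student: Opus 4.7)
The plan is to reduce the corollary directly to Theorem~\ref{theorem:inv_vec_sym} by twisting with the sign character. The key identity is the well-known isomorphism of $S_n$-representations
\begin{displaymath}
    V_\lambda \otimes \sgn \;\cong\; V_{\lambda'},
\end{displaymath}
where $\lambda'$ denotes the conjugate partition of $\lambda$. Under this isomorphism, a nonzero vector $v\in V_\lambda$ satisfying $\rho_\lambda(w_\mu)v = \sgn(w_\mu)\,v$ corresponds to a nonzero vector in $V_{\lambda'}$ fixed by $\rho_{\lambda'}(w_\mu)$, and conversely. Hence the existence of a $\sgn(w_\mu)$-eigenvector for $\rho_\lambda(w_\mu)$ in $V_\lambda$ is equivalent to the existence of a nonzero $w_\mu$-invariant vector in $V_{\lambda'}$.

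With this equivalence in hand, the proof reduces to applying Theorem~\ref{theorem:inv_vec_sym} to the pair $(\lambda',\mu)$: the corollary fails for $(\lambda,\mu)$ precisely when $(\lambda',\mu)$ is one of the nine exceptional pairs listed there. What remains is the routine bookkeeping of conjugating the partitions in that list and verifying the result matches the nine cases stated in the corollary. Specifically, one checks $(1^n)' = (n)$, $(n-1,1)' = (2,1^{n-2})$, $(2,1^{n-2})' = (n-1,1)$, $(2^2,1^{n-4})' = (n-2,2)$, $(2,2)' = (2,2)$, $(2^3)' = (3,3)$, $(2^4)' = (4,4)$, $(4,4)' = (2^4)$, and $(2^5)' = (5,5)$, keeping $\mu$ unchanged. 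These give exactly the nine cases (1)--(9) of the corollary.

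There is no substantive obstacle here: the only mild subtlety is the interpretation of case (1), where $\lambda=(n)$ is the trivial representation and the condition $\rho_{(n)}(w_\mu)v = \sgn(w_\mu)v$ simply says $v = \sgn(w_\mu)v$, which fails precisely when $w_\mu\notin A_n$; this agrees with the conjugated form of case (1) of Theorem~\ref{theorem:inv_vec_sym}. Thus the entire argument is a one-line twist by $\sgn$ followed by invoking the already-established classification of invariant vectors.
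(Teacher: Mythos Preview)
Your proposal is correct and is exactly the paper's approach: the paper's proof is the single sentence ``The proof follows from Theorem~\ref{theorem:inv_vec_sym} and the fact that $V_\lambda\otimes V_{(1^n)}=V_{\lambda'}$,'' which is precisely the sign-twist reduction you describe, and your conjugation bookkeeping matches the nine cases.
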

    \begin{proof}
        The proof follows from Theorem~\ref{theorem:inv_vec_sym} and the fact that $V_\lambda\otimes V_{(1^n)}=V_{\lambda'}$.
    \end{proof}

\section{Ingredients for the proof of the main theorems}\label{section:Ingredients for the proof of the main theorems}
	The following proposition is key to proving Theorem~\ref{theorem:Main_min} and is motivated by \cite[Proposition~3.5]{Giannelli_law}.
    \begin{proposition}
		\label{proposition:Giannelli}
		Let $\delta$ be an irreducible character of the cyclic subgroup $C_n$ of $ S_n$. Let $p$ be a prime and $n=mp$ with both $m>1$. Let $\mu^1, \dots, \mu^p$ be partitions of $m$, not all the same, such that for all $2 \leq i \leq p$, $\langle\Res_{C_m}^{ S_m} \chi_{\mu^i},\mathds{1}_{C_m}\rangle\neq0$ and $\langle \Res_{C_m}^{ S_m}\chi_{\mu^1},\delta^{(p)}\rangle_{C_m}\neq0$.
		Let  $\lambda\vdash n$  be such that $\chi_{\mu^1} \times\dots \times \chi_{\mu^p}$ is an irreducible constituent of $\Res_{ S_m^{\times p}}^{ S_n}\chi_\lambda$. Then  $\Ind_{C_m^{\times p}}^{C_m\wr C_p} \Tilde{\delta^{(p)}}$ is a sub-representation of  $\Res_{C_m\wr C_p}^{ S_m}\chi_\lambda,$ where  $\Tilde{\delta^{(p)}}:=\delta^{(p)}\times \mathds{1}_{C_m}\times \dotsc \times \mathds{1}_{C_m}$.
		As a consequence, we have
		\begin{displaymath}
			\langle\Res_{C_n}^{ S_m}\chi_\lambda,\delta \rangle> 0.        
		\end{displaymath}		
	\end{proposition}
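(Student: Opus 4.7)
The plan is to argue in two stages: first establish the sub-representation claim by lifting the hypothesis to the intermediate group $S_m\wr C_p$ via Clifford theory and then restricting to $C_m\wr C_p$ through Mackey's formula, and second deduce the numerical consequence by restricting further to $C_n$ embedded in $C_m\wr C_p$ as in Lemma~\ref{lemma:mn-cycle}.

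For the first stage, set $\phi = \chi_{\mu^1}\times\cdots\times\chi_{\mu^p}$, an irreducible character of $S_m^{\times p}$. The cyclic group $C_p$ acts on $p$-tuples of characters of $S_m$ by cyclic shift; since $p$ is prime and not all of the $\mu^i$ coincide, the $C_p$-stabilizer of $\phi$ is trivial. By Clifford theory, the induced character $\widetilde\phi := \Ind_{S_m^{\times p}}^{S_m\wr C_p}\phi$ is irreducible. Combining the hypothesis $\phi\leq \Res_{S_m^{\times p}}^{S_n}\chi_\lambda$ with Frobenius reciprocity then forces $\widetilde\phi$ to sit as a sub-representation of $\Res_{S_m\wr C_p}^{S_n}\chi_\lambda$.

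Next, restrict $\widetilde\phi$ further to $C_m\wr C_p$. Since $(C_m\wr C_p)\cdot S_m^{\times p}=S_m\wr C_p$, there is a unique $(C_m\wr C_p,S_m^{\times p})$-double coset in $S_m\wr C_p$, and Mackey specializes to
\begin{equation*}
\Res_{C_m\wr C_p}^{S_m\wr C_p}\widetilde\phi \;=\; \Ind_{C_m^{\times p}}^{C_m\wr C_p}\Res_{C_m^{\times p}}^{S_m^{\times p}}\phi.
\end{equation*}
The two numerical hypotheses on $\langle\Res_{C_m}^{S_m}\chi_{\mu^i},\ast\rangle$ give $\widetilde{\delta^{(p)}}\leq\Res_{C_m^{\times p}}^{S_m^{\times p}}\phi$, and since induction preserves containment, we conclude $\Ind_{C_m^{\times p}}^{C_m\wr C_p}\widetilde{\delta^{(p)}}\leq\Res_{C_m\wr C_p}^{S_n}\chi_\lambda$, which is the desired sub-representation.

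For the consequence, embed $C_n\leq C_m\wr C_p$ via the $n$-cycle $\sigma_n$ of Lemma~\ref{lemma:mn-cycle}. The quotient $C_m\wr C_p\twoheadrightarrow C_p$ shows $C_n\cap C_m^{\times p}=\langle\sigma_n^p\rangle$ has order $m$, and a direct multiplication gives $\sigma_n^p=((w_m,w_m,\dots,w_m);e)$. Evaluating yields $\widetilde{\delta^{(p)}}(\sigma_n^p)=\delta^{(p)}(w_m)=\delta(\sigma_n^p)$, so $\widetilde{\delta^{(p)}}$ and $\delta$ agree on $\langle\sigma_n^p\rangle$. Taking the trivial double-coset representative in Mackey's formula exhibits $\Ind_{\langle\sigma_n^p\rangle}^{C_n}\delta|_{\langle\sigma_n^p\rangle}$ as a summand of $\Res_{C_n}^{C_m\wr C_p}\Ind_{C_m^{\times p}}^{C_m\wr C_p}\widetilde{\delta^{(p)}}$, and Frobenius reciprocity shows that this summand already contains $\delta$, giving $\langle\Res_{C_n}^{S_n}\chi_\lambda,\delta\rangle>0$. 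The principal obstacle is the Clifford-theoretic step: positivity of a Frobenius inner product does not in general promote to a sub-representation containment, and the irreducibility of $\widetilde\phi$, enforced by the combination of $p$ prime and the non-equality of the $\mu^i$, is exactly what lets this promotion go through.
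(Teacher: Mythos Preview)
Your proof is correct and shares the paper's overall architecture---Clifford theory, then Mackey, then restriction to $C_n$ via Lemma~\ref{lemma:mn-cycle}---but streamlines the first stage by choosing a smaller intermediate group. The paper lifts through the full normalizer $S_m\wr S_p$: it picks an irreducible constituent $\theta=\Ind_{S_m\wr I}^{S_m\wr S_p}\Psi\bar\otimes\chi$ of $\Res^{S_n}_{S_m\wr S_p}\chi_\lambda$ lying over $\Psi$, and then applies Mackey for the pair $(S_m\wr I,\,C_m\wr C_p)$ inside $S_m\wr S_p$, using that the inertia Young subgroup $I\leq S_p$ contains no $p$-cycle to get $(S_m\wr I)\cap(C_m\wr C_p)=C_m^{\times p}$. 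You instead pass through $S_m\wr C_p$: primality of $p$ together with the $\mu^i$ not all coinciding forces the $C_p$-stabilizer of $\phi$ to be trivial, so $\Ind_{S_m^{\times p}}^{S_m\wr C_p}\phi$ is already irreducible, and the Mackey step for $(S_m^{\times p},\,C_m\wr C_p)$ inside $S_m\wr C_p$ collapses to a single double coset since $C_p\cdot S_m^{\times p}=S_m\wr C_p$. Both routes ultimately hinge on the same condition $I\cap C_p=\{e\}$; yours is a bit more direct, while the paper's phrasing makes the passage to composite $p$ in Corollaries~\ref{corollary:Giannelli_three_distinct} and~\ref{corollary:Giannelli_rectangular_two_row} transparently the same argument.
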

    \begin{proof}
        Let us look at the following diagram which almost explains the proof. The nodes of the first diagram consist of subgroups of $ S_n$ and those of the second diagram consist of irreducible representations of the corresponding subgroups in the first diagram.

\begin{center}
\begin{tikzpicture}
    \node (Sn) at (0,0) {$S_n$};
    \node (SmSp) at (0,-2) {$S_m\wr S_p$};
    \node (SmI) at (-2,-4) {$S_m\wr I$};
    \node (Smp) at (-2,-6) {$S_m^p$};
    \node (Cmp) at (0,-8) {$C_m^p$};
    \node (CmCp) at (2,-5) {$C_m\wr C_p$};
    
    \draw (Sn) -- (SmSp);
    \draw (SmSp) -- (SmI);
    \draw (SmI) -- (Smp);
    \draw (Smp) -- (Cmp);
    \draw (SmSp) -- (CmCp);
    \draw (CmCp) -- (Cmp);
\end{tikzpicture}
\hspace{3cm}
\begin{tikzpicture}
    \node (Sn) at (0,0) {$\chi_\lambda$};
    \node (SmSp) at (0,-2) {$\Ind_{S_m \wr I}^{S_m\wr S_p} (\Psi;\phi)$};
    \node (SmI) at (-2,-4) {$\Psi\bar{\otimes}\phi$};
    \node (Smp) at (-2,-6) {$\Psi=\chi_{\mu^1}\times \dotsc\times \chi_{\mu^p}$};
    \node (Cmp) at (0,-8) {$\delta^{(p)}$};
    \node (CmCp) at (2,-5) {$\Ind_{C_m^{\times p}}^{C_m\wr C_p} \delta^{(p)}$};
    
    \draw (Sn) -- (SmSp);
    \draw (SmSp) -- (SmI);
    \draw (SmI) -- (Smp);
    \draw (Smp) -- (Cmp);
    \draw (SmSp) -- (CmCp);
    \draw (CmCp) -- (Cmp);
\end{tikzpicture}
\end{center}

    Let us discuss the nodes of the above diagrams more carefully now.
    Let $\Psi=\chi_{\mu^1}\times \dotsc \times \chi_{\mu^p}$.
    From \cite[4.1.25]{JamesKerber}, we have that the normalizer of the subgroup $S_m^{\times p}$ in $S_n$ is equal to the image of $S_m\wr S_p$ under the isomorphism $\Gamma$. 
    Since $\langle\Res^{S_n}_{S_m^{\times p}}\chi_\lambda,\Psi\rangle\neq 0$, there exists an irreducible representation $\theta\in Irr(S_m \wr S_p|\Psi)$ such that $\chi_\lambda\in Irr(S_n,\theta)$. From the description of the irreducible representations of $S_m\wr S_p$, we have $\theta=\Ind_{S_m\wr I}^{S_m\wr S_p} \Psi\bar{\otimes} \chi$. Here, $I$ is the Young subgroup of $S_p$ which stablises 
    $\Psi=\chi_{\mu^1}\times \dotsc \chi_{\mu^p}$. Since $\mu^i\neq \mu^j$ for some $i\neq j$, we have that the Young subgroup $I$ does not contain any $p$-cycles. 
    We now show the series of sub-representations. Firstly,
    \begin{align}\label{eq:s_n_to_wreath}
        \Res_{S_m\wr S_p}^{S_n} \chi_\lambda &\geq   \Ind_{S_m\wr I}^{S_m\wr S_p} \Psi\bar{\otimes} \chi.
    \end{align}
    Using Mackey's restriction formula, we have
    \begin{align*}
        \Res_{C_m\wr C_p}^{S_m\wr S_p} \Ind_{S_m\wr I}^{S_m\wr S_p} \Psi\bar{\otimes} \chi=\sum\limits_{g\in S_m\wr I \setminus S_m\wr S_p / C_m\wr C_p}  \Ind^{C_m\wr C_p}_{{(S_m\wr I)}^g\cap C_m\wr C_p} \Res^{{(S_m\wr I)}^g}_{{(S_m\wr I)}^g\cap C_m\wr C_p} ({\Psi\bar{\otimes} \chi})^g,
    \end{align*}
    where for a subgroup $H$, $H^g=gHg^{-1}$ and for a character $\eta$ of $H$, $\eta^g$ is a character of $gHg^{-1}$ defined by $\eta^g(x)=\eta(g^{-1}xg)$.
    Considering the summand correspond to the double coset representative $g=e$, we obtain
    \begin{align*}
        \Res_{C_m\wr C_p}^{S_m\wr S_p} \Ind_{S_m\wr I}^{S_m\wr S_p} \Psi\bar{\otimes} \chi &\geq \Ind^{C_m\wr C_p}_{{(S_m\wr I)}\cap C_m\wr C_p} \Res^{{(S_m\wr I)}}_{{(S_m\wr I)}\cap C_m\wr C_p} {\Psi\bar{\otimes} \chi} \\
        &= \Ind^{C_m\wr C_p}_{C_m^{\times p}} \Res^{(S_m\wr I)}_{C_m^{\times p}} {\Psi\bar{\otimes} \chi} \\
        &= \Ind^{C_m\wr C_p}_{C_m^{\times p}} \Res^{S_m^{\times p}}_{C_m^{\times p}} \Res^{(S_m\wr I)}_{S_m^{\times p}} {\Psi\bar{\otimes} \chi} \\
        &\geq \Ind^{C_m\wr C_p}_{C_m^{\times p}} \Res^{S_m^{\times p}}_{C_m^{\times p}} \Psi
    \end{align*}
    \begin{equation}\label{eq:SmSp_to_CmCp}
    \Res_{C_m\wr C_p}^{S_m\wr S_p} \Ind_{S_m\wr I}^{S_m\wr S_p} \Psi\bar{\otimes} \chi \geq \Ind^{C_m\wr C_p}_{C_m^{\times p}} \tilde{\delta^{(p)}},
    \end{equation}
    where the second equality follows from the fact that the Young subgroup $I$ does not contain any $p$-cycles and the remaining equalities are from the hypotheses of the statement of the theorem.
    Using Equations~\eqref{eq:s_n_to_wreath},\eqref{eq:SmSp_to_CmCp}, we obtain 
    \begin{align*}
    \Res_{C_m\wr C_p}^{S_n} \chi_\lambda\geq \Ind^{C_m\wr C_p}_{C_m^{\times p}} \tilde{\delta^{(p)}},
    \end{align*}
    which proves the former statement of the theorem.

    From Lemma~\ref{lemma:mn-cycle}, we get that $\eta(\sigma_{mp})\in C_m\wr C_p$ is an $mn$-cycle.
    Since $C_n$ and the cyclic subgroup $D$ generated by $\eta(\sigma_{mn})$ are conjugate subgroups of $S_n$, $\Ind_{C_n}^{S_n} \delta$ and $\Ind_{D}^{S_n} \delta$ are isomorphic $S_n$ representations, where $\delta$ is a linear character of $D$ obtained by sending $\eta(\sigma_{mn})$ to $\delta(w_n)$.
    To prove the latter statement of the theorem, it suffices to show that $\langle\Res_{D}^{ S_m}\chi_\lambda,\delta \rangle> 0$.
    By abuse of notation, let us denote $D$ by $C_n$.
    Using Mackey's restriction formula, we get
    \begin{align*}
      \Res^{C_m\wr C_p}_{C_n}\Ind^{C_m\wr C_p}_{C_m^{\times p}} \tilde{\delta^{(p)}}  &=\sum\limits_{g\in C_m^{\times p} \setminus C_m\wr C_p / C_n}  \Ind^{C_n}_{{(C_m^{\times p})}^g\cap C_n} \Res^{{C_m^{\times p} }^g}_{{(C_m^{\times p})}^g\cap C_n} {\tilde{\delta^{(p)}}}^g\\
      & \geq  \Ind^{C_n}_{{C_m^{\times p}}\cap C_n} \Res^{{C_m^{\times p} }}_{{C_m^{\times p}}\cap C_n} {\tilde{\delta^{(p)}}}\\
      & =  \Ind^{C_n}_{C_n^p} \Res^{{C_m^{\times p} }}_{C_n^p} {\tilde{\delta^{(p)}}}\\
      & =  \Ind^{C_n}_{C_n^p} {\delta^{p}}
    \end{align*}
    \begin{equation}\label{CmCp_to_Cn}
        \Res^{C_m\wr C_p}_{C_n}\Ind^{C_m\wr C_p}_{C_m^{\times p}} \tilde{\delta^{(p)}} \geq \delta
    \end{equation}
    where $C_n^p$ denotes the subgroup generated by $w_n^p$.
    In the above equations, the second inequality follows by considering the summand corresponds to $g=e$ and the other inequalities are obvious.
    Now the latter statement follows from equations~\eqref{eq:SmSp_to_CmCp},\eqref{CmCp_to_Cn}.
The result is proved.
\end{proof}
    \begin{corollary}
		\label{corollary:Giannelli_three_distinct}
		Let $\delta$ be an irreducible character of the cyclic subgroup $C_n$ of $ S_n$ and let $n=4m$ with $m>1$. Let $\mu^1, \mu^2, \mu^3, \mu^4$ be partitions of $m$, with at least three of them pairwise distinct, such that for all $2 \leq i \leq 4$, $\langle\Res_{C_m}^{ S_m} \chi_{\mu^i},\mathds{1}_{C_m}\rangle\neq0$ and $\langle \Res_{C_m}^{ S_m}\chi_{\mu^1},\delta^{(4)}\rangle_{C_m}\neq0$.
		Let  $\lambda\vdash n$  be such that $\chi_{\mu^1} \times\dots \times \chi_{\mu_4}$ is an irreducible constituent of $\Res_{ S_m^{\times 4}}^{ S_n}\chi_\lambda$. Then  $\Ind_{C_m^{\times 4}}^{C_m\wr C_4} \Tilde{\delta^{(4)}}$ is a sub-representation of  $\Res_{C_m\wr C_4}^{ S_n}\chi_\lambda,$ where  $\Tilde{\delta^{(4)}}:=\delta^{(4)}\times \mathds{1}_{C_m}\times \dotsc \times \mathds{1}_{C_m}$.
		As a consequence, we have
		\begin{displaymath}
			\langle\Res_{C_n}^{ S_n}\chi_\lambda,\delta \rangle> 0.        
		\end{displaymath}		
    \end{corollary}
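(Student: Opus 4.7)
The plan is to mimic the proof of Proposition~\ref{proposition:Giannelli} with $p$ replaced by $4$ throughout. The only step in that proof where the primality of $p$ is invoked is the identification $(S_m\wr I)\cap (C_m\wr C_p)=C_m^{\times p}$, which reduces to proving $I\cap C_p=\{e\}$; for prime $p$ this follows as soon as $I\subsetneq S_p$, since every nonidentity element of $C_p$ is a $p$-cycle. For $p=4$, however, $C_4=\langle(1234)\rangle$ also contains the double transposition $(13)(24)=w_4^2$, so I would need to re-establish $I\cap C_4=\{e\}$ under the strengthened hypothesis.

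First I would classify the possible Young subgroups $I\le S_4$. The subgroup $I$ is determined by the equality pattern among $\mu^1,\mu^2,\mu^3,\mu^4$, and requiring at least three of them to be pairwise distinct forces this pattern to be either $(1,1,1,1)$ (all four distinct, giving $I=\{e\}$) or $(2,1,1)$ (exactly one pair coincides, giving $I\cong S_2$ generated by a single transposition $(ij)$); the remaining patterns $(2,2)$, $(3,1)$, $(4)$ are excluded since each admits at most two distinct values among the $\mu^i$. In the first case $I\cap C_4=\{e\}$ trivially, and in the second the sole nontrivial element of $I$ is a transposition, which is neither a $4$-cycle nor a double transposition and hence lies outside $C_4$. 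This would give $I\cap C_4=\{e\}$ in both cases.

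With this key identity in hand, the rest of the argument would be a verbatim transcription of the proof of Proposition~\ref{proposition:Giannelli}: starting from $\Res_{S_m\wr S_4}^{S_n}\chi_\lambda\ge\Ind_{S_m\wr I}^{S_m\wr S_4}(\Psi\bar\otimes\chi)$, applying Mackey's restriction formula to descend to $C_m\wr C_4$ and retaining only the $g=e$ double-coset summand, and then invoking the hypotheses $\langle\Res\chi_{\mu^1},\delta^{(4)}\rangle\neq 0$ and $\langle\Res\chi_{\mu^i},\mathds{1}\rangle\neq 0$ for $i\ge 2$ to conclude the first assertion. For the consequence, Lemma~\ref{lemma:mn-cycle} supplies a $4m$-cycle inside $C_m\wr C_4$, and a second application of Mackey (again isolating the $g=e$ summand) delivers $\langle\Res_{C_n}^{S_n}\chi_\lambda,\delta\rangle>0$.

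The main obstacle is the case analysis above and, in particular, understanding why the hypothesis is phrased as ``at least three pairwise distinct'' rather than the prime-case analogue ``not all four equal'': with the alternating labeling $\mu^1=\mu^3$, $\mu^2=\mu^4$ (and $\mu^1\ne\mu^2$), the Young subgroup $I=S_{\{1,3\}}\times S_{\{2,4\}}$ genuinely contains $(13)(24)=w_4^2\in C_4$, so the key intersection identity would fail and the subsequent Mackey computation would not simplify as needed. The ``three pairwise distinct'' hypothesis is a clean combinatorial condition that uniformly rules out this obstruction.
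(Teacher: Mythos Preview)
Your proposal is correct and follows exactly the paper's own approach: the paper's proof simply says the corollary follows from the proof of Proposition~\ref{proposition:Giannelli} together with the observation that the Young subgroup $I$ stabilising $(\mu^1,\mu^2,\mu^3,\mu^4)$ meets $C_4$ trivially when at least three of the $\mu^i$ are pairwise distinct. Your case analysis on the equality pattern and the discussion of why the $(2,2)$ pattern must be excluded spell this out in more detail than the paper does, but the underlying argument is identical.
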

    \begin{proof}
        The corollary follows from the proof of the Proposition~\ref{proposition:Giannelli} and the following observation. The Young subgroup of $S_4$ which stablises $\{\mu^1,\mu^2,\mu^3,\mu^4\}$ intersects $C_4$ trivially since at least of three elements of $\{\mu^1,\mu^2,\mu^3,\mu^4\}$ are pairwise distinct.
    \end{proof}

  \begin{corollary}
		\label{corollary:Giannelli_rectangular_two_row}
		Let $\delta$ be an irreducible character of the cyclic subgroup $C_n$ of $ S_n$ and let $n=2p$ where $p>10$ is a prime.
        If $\delta^{(p)}$ is the sign character $\chi_{(1,1)}$ of $S_2$, then let $\mu^1=(1,1)$ and 
        $\mu^2=\mu^3=\dotsc=\mu^p=(2)$.
        Otherwise, $\delta^{(p)}$ is the trivial character $\chi_{(2)}$ of $S_2$ and let $\mu^1=\mu^2=(1,1)$, $\mu^3=\mu^4=\dotsc=\mu^p=(2)$.
        Then $\chi_{\mu^1} \times\dots \times \chi_{\mu^p}$ is an irreducible constituent of $\Res_{ S_2^{\times p}}^{ S_n}\chi_\lambda$ where $\lambda=(p,p)$.
        Then $\Ind_{S_2^{\times p}}^{S_2\wr C_p} \chi_{\mu^1}\times\dotsc\times\chi_{\mu^p}$ is a sub-representation of  $\Res_{S_2\wr C_p}^{S_n}\chi_\lambda.$
		As a consequence, we have
		\begin{displaymath}
			\langle\Res_{C_n}^{ S_n}\chi_\lambda,\delta \rangle> 0.        
		\end{displaymath}		
	\end{corollary}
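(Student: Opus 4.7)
The plan is to adapt the proof of Proposition~\ref{proposition:Giannelli} and its Corollary~\ref{corollary:Giannelli_three_distinct} to the present setting, treating the sign and trivial cases for $\delta^{(p)}$ on parallel tracks. Three things must be checked in order: (a) that $\Psi := \chi_{\mu^1}\times\dotsc\times\chi_{\mu^p}$ is a constituent of $\Res^{S_n}_{S_2^{\times p}}\chi_{(p,p)}$; (b) the sub-representation claim for $\Res^{S_n}_{S_2\wr C_p}\chi_{(p,p)}$; and (c) the consequence that $\delta$ appears in $\Res^{S_n}_{C_n}\chi_{(p,p)}$. For (a) I would pass to Frobenius characteristics, so the multiplicity equals the coefficient of $s_{(p,p)}$ in $\prod_i s_{\mu^i}$. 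In the sign case $s_{(1,1)}s_{(2)}^{p-1}$, dual Pieri at the last step forces the penultimate shape to be the unique two-row partition $(p-1,p-1)$ with $(p,p)/(p-1,p-1)$ a vertical $2$-strip, reducing the multiplicity to $K_{(p-1,p-1),(2^{p-1})}$. In the trivial case, expand $s_{(1,1)}^2 = s_{(2,2)}+s_{(2,1,1)}+s_{(1^4)}$; Pieri by $h_2$ cannot decrease the number of rows, so only the two-row summand $s_{(2,2)}$ can contribute to the two-row target, and a short Littlewood--Richardson check shows that the only two-row $\nu\vdash 2(p-2)$ with $c^{(p,p)}_{(2,2),\nu}\neq 0$ is $\nu=(p-2,p-2)$ (with LR coefficient $1$), reducing the multiplicity to $K_{(p-2,p-2),(2^{p-2})}$. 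Both Kostka numbers are visibly positive for $p\geq 4$: place the lower half of the alphabet twice each in the top row and the upper half twice each in the bottom row (adjusting the middle letter if $p$ is odd).

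For (b) I would copy the argument of Proposition~\ref{proposition:Giannelli} with the sole change that the Young subgroup $I\leq S_p$ stabilizing the ordered tuple $(\mu^1,\dotsc,\mu^p)$ is $S_1\times S_{p-1}$ in the sign case and $S_2\times S_{p-2}$ in the trivial case. In both cases $I$ is a proper Young subgroup of $S_p$ whose blocks all have size $<p$; since $p$ is prime, $I$ then contains no element of order $p$, and since every nontrivial element of $C_p$ is a $p$-cycle, this yields $I\cap C_p=\{e\}$ and thus $(S_2\wr I)\cap(S_2\wr C_p)=S_2^{\times p}$. The Mackey restriction on the identity double coset then delivers $\Ind_{S_2^{\times p}}^{S_2\wr C_p}\Psi$ as a sub-representation of $\Res^{S_n}_{S_2\wr C_p}\chi_{(p,p)}$ exactly as in Proposition~\ref{proposition:Giannelli}.

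For (c) I would invoke Lemma~\ref{lemma:mn-cycle} to produce a $2p$-cycle $\eta(\sigma_{2p})\in S_2\wr C_p$, and let $D$ be the cyclic subgroup it generates, which is conjugate to $C_n$ in $S_n$. A direct wreath-product calculation gives $\sigma_{2p}^p = ((1\,2),(1\,2),\dotsc,(1\,2);e)$, so $D\cap S_2^{\times p}$ has order $2$, with nontrivial element $\tau$ on which $\Psi$ takes the value $(-1)^{\#\{i:\mu^i=(1,1)\}}$. This equals $-1$ in the sign case and $+1$ in the trivial case, which in each case coincides with $\delta^{(p)}(w_2) = \delta(w_{2p}^p)$. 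A second Mackey-plus-Frobenius argument on $D$ then yields $\delta\leq\Res^{S_2\wr C_p}_D\Ind^{S_2\wr C_p}_{S_2^{\times p}}\Psi\leq\Res_D^{S_n}\chi_{(p,p)}$, which after unwinding the conjugation gives the final inequality $\langle\Res^{S_n}_{C_n}\chi_{(p,p)},\delta\rangle>0$.

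The main obstacle is the Kostka positivity in step (a); the rest of the argument is essentially a transcription of Proposition~\ref{proposition:Giannelli}, where the only new subtlety is that the stabilizer $I$ is larger in the trivial case but still intersects $C_p$ trivially by primality of $p$.
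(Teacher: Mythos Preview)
Your proposal is correct and follows the same route as the paper, whose proof is the single sentence ``The proof follows from the proof of Proposition~\ref{proposition:Giannelli}.'' You have simply supplied the details that this sentence suppresses: the verification in~(a) that the chosen $\Psi$ actually occurs in $\Res^{S_n}_{S_2^{\times p}}\chi_{(p,p)}$ (which is a hypothesis, not a conclusion, in Proposition~\ref{proposition:Giannelli} and so must be checked here), the observation in~(b) that primality of $p$ forces $I\cap C_p=\{e\}$ even when $I=S_2\times S_{p-2}$, and the direct computation in~(c) that $\Psi(\tau)=\delta(w_{2p}^p)$ in both cases. The last point is worth highlighting: in the trivial case your $\Psi$ is \emph{not} the $\tilde{\delta^{(p)}}$ of Proposition~\ref{proposition:Giannelli} (indeed $\langle\chi_{\mu^1},\delta^{(p)}\rangle=\langle\chi_{(1,1)},\chi_{(2)}\rangle=0$, so the proposition's literal hypotheses fail), but your Mackey argument on $D$ only needs $\Psi$ and $\delta$ to agree on $D\cap S_2^{\times p}$, which you verify directly.
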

    \begin{proof}
        The proof follows from the proof of the Proposition~\ref{proposition:Giannelli}.
    \end{proof}

    We have a couple of interesting lemmas.
    
	\begin{lemma}\label{lemma:alphabeta}
		For every partition $\lambda$ of $p+q$, and every partition $\alpha$ of $p$ that is contained in $\lambda$, there exists a partition $\beta$ of $q$ such that $s_\alpha s_\beta\geq s_\lambda$.
		\begin{proof}
        We reproduce the proof of \cite[Lemma 2.1]{Inv_vectors}, as it will be used extensively in this article.
			Let $T_{\lambda\alpha}$ denote the skew-tableau obtained by putting $i$ in the $i$th cell (from the top) of each column of $\lambda/\alpha$.
			Let $\beta$ be the weight of $T_{\lambda\alpha}$.
            Clearly, $T_{\lambda/\alpha}$ is semi standard.
			Since every $i+1$ occurs below an $i$ in the same column, the reverse row reading word is a lattice permutation.
			The Littlewood-Richardson rule implies that $s_\alpha s_\beta\geq s_\lambda$.
			For example, if $\lambda = (5,4,4,1)$ and $\alpha = (3,2,1)$ then
			\ytableausetup{boxsize=1.2em}
			\begin{displaymath}
				T_{\lambda\alpha} = \begin{ytableau}
					*(yellow)  &*(yellow) &*(yellow) & 1 & 1 \\
					*(yellow)&*(yellow)  & 1 & 2\\
					*(yellow) & 1 & 2 & 3\\
					1 \\
				\end{ytableau},
			\end{displaymath}
			and $\beta$ is $(5,2,1)$.
		\end{proof}
	\end{lemma}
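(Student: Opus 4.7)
The plan is to reduce the inequality $s_\alpha s_\beta \geq s_\lambda$ to a combinatorial existence question via the Littlewood--Richardson rule, and then to construct both $\beta$ and a witnessing LR-tableau by a single canonical filling. Concretely, by the Littlewood--Richardson rule it suffices to exhibit a semistandard tableau of shape $\lambda/\alpha$ whose reverse row reading word is a lattice permutation; then $\beta$ will be defined to be its content, and since the skew shape has $|\lambda|-|\alpha|=q$ cells, $\beta$ will automatically be a partition of $q$.

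My first attempt would be the most natural column-by-column construction: place $1$ in the topmost cell of each column of $\lambda/\alpha$, $2$ in the next cell down of that column, and so on. Equivalently, the cell $(r,j)\in\lambda/\alpha$ receives the entry $r-\alpha'_j$, where $\alpha'$ is the conjugate partition of $\alpha$. The content $\beta$ is then determined entry-by-entry by this rule.

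I would then verify three conditions. Columns are strictly increasing by construction. Rows are weakly increasing, because $\alpha'_j \geq \alpha'_{j+1}$ (as $\alpha$ is a partition), so the entry $r-\alpha'_j$ along a fixed row is weakly increasing in $j$. The only condition requiring a moment of thought is the lattice property of the reverse row reading word; here the key observation is that every $i+1$ in this filling sits immediately below an $i$ in the same column, so reading rows from top to bottom and right to left, each occurrence of $i+1$ is preceded by the matching occurrence of $i$ in the row above. This gives the required dominance of $i$'s over $(i+1)$'s at every prefix.

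The hard part, if any, is the lattice condition; everything else is immediate from $\alpha \subseteq \lambda$ together with $\alpha$ being a partition. Once the three checks are in place, the constructed tableau is an LR-tableau of shape $\lambda/\alpha$ and content $\beta$, whence $c^\lambda_{\alpha\beta}\geq 1$ and therefore $s_\alpha s_\beta \geq s_\lambda$, completing the argument.
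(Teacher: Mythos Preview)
Your proof is correct and is essentially the same as the paper's: both fill each column of $\lambda/\alpha$ top-to-bottom with $1,2,\dots$, take $\beta$ to be the resulting content, and invoke the Littlewood--Richardson rule after noting the filling is semistandard and its reverse reading word is a lattice permutation. Your write-up is slightly more explicit (the formula $r-\alpha'_j$ and the row-monotonicity check via $\alpha'_j\geq\alpha'_{j+1}$), but the construction and argument are identical.
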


 	\begin{lemma}\label{lemma:choose-beta}
		Given integers $p\geq 2$, $q\geq 1$, and a partition $\lambda\vdash (p+q)$ different from $(1^{(p+q)})$, there exists a partition $\beta\vdash q$ such that $f_{(q)}\geq s_\beta$ and $\beta\subset\lambda$.
	\end{lemma}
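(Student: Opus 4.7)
The plan is to exhibit $\beta\subset\lambda$ with $|\beta|=q$ and $f_{(q)}\geq s_\beta$ by case analysis on the shape of $\lambda$, and to verify $f_{(q)}\geq s_\beta$ for each constructed $\beta$ via Theorem~\ref{Theorem:Kra_wey}. The universal building block is $\beta=(q)$: by Frobenius reciprocity the trivial character of $S_q$ appears in $f_{(q)}=\Ind_{C_q}^{S_q}\mathbf{1}$, so $f_{(q)}\geq s_{(q)}$ always. Hence whenever $\lambda_1\geq q$, set $\beta=(q)$.

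Assume henceforth $\lambda_1\leq q-1$; since $\lambda\neq(1^{p+q})$ we also have $\lambda_1\geq 2$. I split on whether $\lambda$ is a hook. If $\lambda_2\geq 2$ (non-hook case), I choose a two-row partition $\beta$ with $\beta_2\geq 2$, e.g.\ $\beta=(q-2,2)$ when $\lambda_1\geq q-2$ and $q\geq 4$; for smaller $\lambda_1$ I instead take a subpartition of $\lambda$ such as $\beta=(\lambda_1,2,1,\ldots,1)$, using the bounds $|\lambda|=p+q\geq q+2$ and $\lambda_2\geq 2$ to ensure enough room. If instead $\lambda=(\lambda_1,1^b)$ is a hook with $\lambda_1\leq q-1$, then $b=p+q-\lambda_1\geq p+1\geq 3$, and the subpartitions of $\lambda$ of size $q$ are hooks $(a,1^{q-a})$ with $\max(1,q-b)\leq a\leq\lambda_1$; this interval contains at least $p+1\geq 3$ integers, giving enough freedom to pick a good hook as $\beta$.

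The verification $f_{(q)}\geq s_\beta$ reduces, via Theorem~\ref{Theorem:Kra_wey}, to exhibiting one SYT of shape $\beta$ with major index divisible by $q$. For the two-row shape $(q-2,2)$ and for hook shapes $(a,1^{q-a})$ with moderate $a$, such an SYT can be written down by a direct combinatorial construction; the small-$q$ cases $q\in\{1,2,3\}$ are checked by inspection. The main obstacle is the careful case analysis combined with the explicit SYT constructions establishing $a_\beta^0>0$ for each family of shapes, with the hook case being the most delicate since it requires computing major indices across a whole family of hook tableaux and arranging that at least one value of $a$ in the available interval yields $a_\beta^0>0$.
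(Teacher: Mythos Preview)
Your non-hook case has a genuine gap. When $\lambda_2\geq 2$ and $\lambda_1<q-2$, you propose $\beta=(\lambda_1,2,1^{q-\lambda_1-2})$, asserting that $|\lambda|\geq q+2$ and $\lambda_2\geq 2$ ``ensure enough room''. They do not: this $\beta$ has $q-\lambda_1$ parts, and nothing forces $\lambda$ to have that many. Take $p=2$, $q=100$, $\lambda=(50,50,2)$; then $\lambda_1=50<98$, your $\beta=(50,2,1^{48})$ has $50$ parts, but $\lambda$ has only $3$, so $\beta\not\subset\lambda$. Beyond this structural failure, the required verifications $f_{(q)}\geq s_\beta$ are only asserted. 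For $(q-2,2)$ the SYT with second row $\{2,q\}$ does have major index $q$, but for the shapes $(\lambda_1,2,1^{q-\lambda_1-2})$ and for the family of hooks $(a,1^{q-a})$ you give no construction; you yourself flag the hook case as ``the most delicate'' without resolving it. (Incidentally, the hook interval contains $\min(\lambda_1,p+1)$ values, not always $p+1$; when $\lambda_1=2$ there are only two choices.)

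The paper does not prove this lemma here; it cites \cite[Lemma~2.3]{Inv_vectors}. A much cleaner route, consistent with how the lemma is actually used, is to invoke the classification of $\beta\vdash q$ with $a_\beta^0=0$ (Theorem~\ref{conjecture:Sundaram}): the only bad shapes are $(q-1,1)$ together with $(2,1^{q-2})$ for $q$ odd or $(1^q)$ for $q$ even. The task then reduces to exhibiting some $\beta\vdash q$ inside $\lambda$ avoiding this short list, which is an easy containment argument (in the example above, $\beta=(50,50)$ works). There is no circularity with the paper's inductive proof of Theorem~\ref{theorem:Main_min}: Lemma~\ref{lemma:choose-beta} is only ever applied with $q<n$, where Theorem~\ref{theorem:Main_min} (hence Theorem~\ref{conjecture:Sundaram}) for size $q$ is already available by the inductive hypothesis.
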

	\begin{proof}
	    See \cite[Lemma 2.3]{Inv_vectors}.
	\end{proof}
	
\section{Proof of Thorem~\ref{theorem:Main_min}} \label{section:Proof of Thorem theorem:Main_min}

	\begin{lemma}
		\label{lemma:base}
		If $p>5$ is an odd positive integer, then 
		\begin{equation}\label{eq:trivial_trivial_odd}
			\sum\limits_{\substack{\alpha,\beta\vdash p \\ \alpha\neq\beta\\ \alpha\neq(p-1,1),(2,1^{p-2})\\ \beta\neq (p-1,1),(2,1^{p-2}) }}s_\alpha s_\beta\geq s_\lambda
		\end{equation}
		for all partitions $\lambda$ of $2p$ except when 
		$\lambda=(2p),(2p-1,1),(p,p),(2^p),(2,1^{2p-2}),(1^{2p}).$

        If $p> 5$ is an odd integer, then 
		\begin{equation}\label{eq:non-trivial_trivial_odd}
			\sum\limits_{\substack{\alpha,\beta\vdash p \\ \alpha\neq\beta\\ \alpha\neq (p),(1^p)\\ \beta \neq (p-1,1),(2,1^{p-2})  }}s_\alpha s_\beta\geq s_\lambda
		\end{equation}
		for all partitions $\lambda$ of $2p$ except when 
		$\lambda=(2p),(p,p),(2^p),(1^{2p}).$

		If $p>5$ is an even positive integer, then 
		\begin{equation}\label{eq:trivial_trivial_even}
			\sum\limits_{\substack{\alpha,\beta\vdash p \\ \alpha\neq\beta\\ \alpha\neq(p-1,1),(1^{p})\\ \beta\neq (p-1,1),(1^{p}) }}s_\alpha s_\beta\geq s_\lambda
		\end{equation}
		for all partitions $\lambda$ of $2p$ except when      
		
		$\lambda=(2p),(2p-1,1),(p,p),(3,1^{2p-3}),(2^p),(2,2,1^{2p-4}),(2,1^{2p-2}),(1^{2p}).$
		
		If $p>5$ is an even positive integer, then 
		\begin{equation}\label{eq:non-trivial_trivial_even}
			\sum\limits_{\substack{\alpha,\beta\vdash p \\ \alpha\neq\beta\\ \alpha\neq (p),(1^p) \\ \beta\neq (p-1,1),(1^p) }}s_\alpha s_\beta\geq s_\lambda
		\end{equation}
		for all partitions $\lambda$ of $2p$ except when 
		$\lambda=(2p),(p,p),(3,1^{2p-3}),(2^p),(2,2,1^{2p-4}),(2,1^{2p-2}),(1^{2p}).$

	\end{lemma}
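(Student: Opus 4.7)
The plan is to prove each of the four Schur positivity inequalities by fixing a partition $\lambda\vdash 2p$ outside the stated exception list and exhibiting a single pair $(\alpha,\beta)$ of partitions of $p$ that appears in the sum on the left and already dominates $s_\lambda$. The key ingredient is Lemma~\ref{lemma:alphabeta}: for any $\alpha\vdash p$ with $\alpha\subset\lambda$, there is a canonical partition $\beta$---the column weight of the tableau $T_{\lambda/\alpha}$---such that $s_\alpha s_\beta\geq s_\lambda$. So the problem reduces to choosing $\alpha\subset\lambda$ so that (i) $\alpha$ avoids the forbidden list, (ii) the resulting $\beta$ avoids the forbidden list, and (iii) $\alpha\neq\beta$ when required.

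For \eqref{eq:trivial_trivial_odd} and \eqref{eq:non-trivial_trivial_odd}, both the allowed pair-set and the exception list are closed under the involution $\omega$, since $(p-1,1)'=(2,1^{p-2})$, $(p)'=(1^p)$, $(2p-1,1)'=(2,1^{2p-2})$, and $(p,p)'=(2^p)$. It therefore suffices to treat $\lambda$ with $\lambda_1\geq\lambda_1'$; the remaining cases follow by applying $\omega$. For \eqref{eq:trivial_trivial_even} and \eqref{eq:non-trivial_trivial_even} the exclusion of $(1^p)$ (in place of its conjugate $(2,1^{p-2})$) breaks this symmetry, which is precisely why the asymmetric shapes $(3,1^{2p-3})$ and $(2,2,1^{2p-4})$ appear as exceptions; those cases are handled directly but by the same method. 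The workhorse is this: if $\lambda_1$ is large enough, try $\alpha=(p)$ and compute $\beta$ from $T_{\lambda/\alpha}$; verify conditions (i)--(iii). If $\alpha=(p)$ is excluded (as in \eqref{eq:non-trivial_trivial_odd} and \eqref{eq:non-trivial_trivial_even}), or yields a $\beta$ violating (ii) or (iii), one perturbs $\alpha$ to $(p-1,1),(p-2,2),(p-2,1,1)$, or a longer hook $(p-k,1^k)$, chosen so that $\alpha\subset\lambda$ and the recomputed $\beta$ is safe. The mirror choices starting from $\alpha=(1^p)$ handle the ``tall'' $\lambda$'s.

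The hard part is identifying the ``extremal'' $\lambda$'s whose list of sub-partitions of size $p$ is so restricted that no valid $(\alpha,\beta)$ exists---these turn out to be exactly the stated exceptions. For instance $\lambda=(2p)$ forces $\alpha=\beta=(p)$; the rectangles $(p,p)$ and $(2^p)$ force $\beta=\alpha$ by the symmetry of the skew shape; $\lambda=(2p-1,1)$ leaves as non-degenerate choices only $(p)$ (giving $\beta=(p)$) and the excluded $(p-1,1)$; and in the even case the additional exclusion of $(1^p)$ (resp.\ $(p)$) adds $(3,1^{2p-3}),(2,2,1^{2p-4}),(2,1^{2p-2})$ by analogous degeneracies. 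For every remaining $\lambda$ an explicit $\alpha$ is produced, and the hypothesis $p>5$ is used precisely to guarantee that the auxiliary partitions $(p-1,1),(p-2,2),(p-2,1,1),(3,1^{p-3}),\dotsc$ are pairwise distinct from one another and from the forbidden shapes, so that the enumeration closes. The entire argument rests on no deeper tool than Lemma~\ref{lemma:alphabeta}; the main technical labor is a careful, case-by-case verification of conditions (i)--(iii) for the perturbations used in each boundary shape.
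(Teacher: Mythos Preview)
Your proposal is correct and follows essentially the same route as the paper: choose an initial $\alpha\subset\lambda$, produce $\beta$ via Lemma~\ref{lemma:alphabeta}, and perturb $\alpha$ when the resulting pair is forbidden or coincides, exploiting the $\omega$-symmetry in the odd case and handling the asymmetric even case directly. The paper organizes the case analysis slightly differently---branching first on whether $\lambda\supset(p-1,1)$ or $\lambda\supset(2,1^{p-2})$ and starting with $\alpha=(p-2,2)$, $(2,2,1^{p-4})$, or $(3,1^{p-3})$ rather than $(p)$---but the method, the use of Lemma~\ref{lemma:alphabeta}, and the identification of the extremal shapes are the same; your sketch would need exactly the kind of detailed casework the paper carries out.
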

	\begin{proof}
Let us fix some notations.
For a positive integer $p$, let
\begin{gather*}
        \label{eq:chiplusminus}
        A_p = 
        \begin{cases}
             \{(p-1,1),(2,1^{p-2})\}& \text{if $p$ is odd,}\\
            \{(p-1,1),(1^{p})\} & \text{if $p$ is even.}
        \end{cases}
        \\
        B_p =\{(p),(1^p)\}. 
\end{gather*}

We have two equations in the statement of the theorem to consider depending on $p$ is odd or even.
If $p$ is odd, then for Equation~\eqref{eq:trivial_trivial_odd} (resp. \eqref{eq:non-trivial_trivial_odd}), we must produce partitions $\alpha,\beta$ of $p$ such that $\alpha\neq\beta$, $\alpha,\beta\notin A_p$ (resp. $\alpha\notin B_p$ and $\beta\notin A_p$). Similarly, for $p$ even.

%For the equations~\eqref{eq:trivial_trivial_odd}, \eqref{eq:trivial_trivial_even} (resp. \eqref{eq:non-trivial_trivial_odd},\eqref{eq:non-trivial_trivial_even}), we must produce partitions $\alpha,\beta$ of $p$ such that $\alpha\neq\beta$, $\alpha,\beta\notin A_p$ (resp. $\alpha\notin B_p$ and $\beta\notin A_p$).
        
	The proof uses a case-by-case analysis.

        Let $\lambda$ be a partition of $2p\geq 12$. The theorem is easy to verify when $\lambda=(2p)$ or $(1^{2p})$.
        Thus we may assume that $\lambda$ is equal to neither $(2p) \text{ nor } (1^{2p})$.
		\begin{itemize}
			\item $\lambda\supset (p-1,1)$ 
			\begin{itemize}
				\item Suppose that $\lambda_2>1$.
				%Let $u_p=\sum\limits_{\substack{\beta\vdash p \\ \beta\neq (p-1,1),(1^p)}}s_\beta$ if $p$ is even and $u_p=\sum\limits_{\substack{\beta\vdash p \\ \beta\neq (p-1,1),(2,1^{p-2})}}s_\beta$ if $p$ is odd.
				Then choose $\alpha=(p-2,2)$, and choose $\beta$ as in the proof of  Lemma~\ref{lemma:alphabeta}. Since $\alpha\notin A_p\cup B_p$, if $\beta\notin A_p$ and $\beta\neq\alpha$, then we are done. Otherwise, one of the following situations occurs.
				\begin{itemize}
					%\item $\beta=(p)$.
					%   Since each column of $T_{\lambda\alpha}$ in the `~\ref{lemma:alphabeta} is filled with integers $1,2,\cdots$ in increasing order, $\lambda/\alpha$ has exactly $p$ columns with one cell.
					%  If $\lambda\neq (2p-2,2)$ then we may choose $\beta=(p-1,1)$ as shown in the example below.
					% 
					%Since the first row contains at least one cell of $\lambda/\alpha$, the reverse row reading word will be a lattice permutation.
					%Otherwise, $\lambda=(2p-2,2)$. We may choose,  $\beta=(p-1,1)$, $\alpha=(p-1,1)$ for 
					
					\item $\beta = (p-1,1)$.
					Since each column of $T_{\lambda\alpha}$ in the proof of Lemma~\ref{lemma:alphabeta} is filled with integers $1,2,\cdots$ in increasing order, $\lambda/\alpha$ has $p-2$ columns with one cell and one column with two cells.
					Either the first column, the third column, or the $(p-1)$st column can have two cells.
					
					If the first column of $\lambda/\alpha$ has two cells, then we may change $\beta$ to $(p-2,1,1)$ by constructing a skew-tableau as shown in the example below.
					\begin{displaymath}
						\ydiagram{5,2}*[*(yellow) 2]{0,0,1}*[*(yellow) 3]{0,0,0,1}*[*(lime) 1]{5+2,2+3,1}
					\end{displaymath}
					Note that the reverse reading word will be a lattice permutation because $\lambda\supset (p-1,1)$, so there will be at least one cell in the first row of $\lambda/\alpha$.
					
					If the third column of $\lambda/\alpha$ has two cells, then we may change $\beta$ to $(p-3,3)$ as shown in the example below.
					\begin{displaymath}
						\ydiagram{5,2}*[*(yellow) 1]{0,2+1}*[*(yellow) 2]{0,0,2+1}*[*(lime) 1]{5+2,2+3}*[*(lime) 2]{0,0,3}
					\end{displaymath}
					Note that the reverse reading word will be a lattice permutation because at least three $1$ occurs above the third (since $p\geq 6$).
					
					If the $(p-1)$st column of $\lambda/\alpha$ has two cells, we may change $\alpha$ to $(p-3,3)$ (note that $p\geq 6$, so $\alpha$ is a partition).
					We may choose $\beta=(p-2,2)$ as shown in the example below.
					\begin{displaymath}
						\ydiagram{5,2}*[*(yellow) 1]{5+1}*[*(yellow) 2]{0,5+1}*[*(lime) 1]{6+2,2+3,1}
						\longrightarrow
						\ydiagram{4,3}*[*(yellow) 1]{5+1}*[*(yellow) 2]{0,5+1}*[*(lime) 1]{6+2,3+1,1}*[*(lime) 2]{0,4+1}*[*(lime) 1]{4+1}
					\end{displaymath}
					\item $\beta = (2,1^{p-2})$ with $p$ odd.
					This would mean that $\lambda/\alpha$ has two columns, having $1$ and $p-1$ cells respectively.
					But since $\lambda\supset (p-1,1)$, the $(p-1)$st cell in the first row lies in a column of length one.
					Since $p\geq 6$, the other column of $\lambda/\alpha$ has to be the first one.
					In particular, $\lambda/\alpha$ is a vertical strip, so we can replace $\beta$ with $(1^p)$.
					
					\item $\beta=(1^p)$ with $p$ even.
					In this case, $\lambda/\alpha$ has only one column.
					But $\lambda/\alpha$ contains a cell at the position $(1,p-1)$ and hence all the cells of $\lambda/\alpha$ lie in the $p-1$th column.
					It follows that this case cannot occur.
					\item $\beta=(p-2,2)$.
					In this case, the shape $\lambda/\alpha$ contains $p-4$ columns with a single cell and exactly two columns with two cells.    
					If those two column numbers are $p-1$ and $p$, then we get $\lambda=(p,p)$, for which all the equations in the statement of the lemma are does not hold.
					If those two column numbers are $3$ and $p-1$, then we may change $\beta$ to $(p-3,3)$ as shown in the example below.
					\begin{displaymath}
						\ydiagram{5,2}*[*(yellow) 1]{5+1,2+1}*[*(yellow) 2]{0,5+1,2+1}*[*(lime) 1]{6+1,2+3,0+2}*[*(lime) 2]{0,0,2+1}
                        \longrightarrow
                        \ydiagram{5,2}*[*(yellow) 1]{5+1,2+1}*[*(yellow) 2]{0,5+1,2+1}*[*(lime) 1]{6+1,2+3,1}*[*(lime) 2]{0,0,2+1}*[*(magenta) 2]{0,0,1+1}
					\end{displaymath}
					If those two column numbers are $1$ and $p-1$, then $\lambda=(p-1,p-1,1,1)$. Therefore, we may change $\beta$ to $(p-3,1,1)$ as shown in the example below.
					\begin{displaymath}
						\ydiagram{5,2}*[*(lime) 1]{5+0,2+3}*[*(lime) 2]{0,6+0,0}*[*(yellow) 1]{5+1,2+3,1}*[*(yellow) 2]{0,5+1,1+0,0+1}*[*(yellow) 3]{0,0,0,0}
                        \longrightarrow
                        \ydiagram{5,2}*[*(lime) 1]{5+0,2+3}*[*(lime) 2]{0,6+0,0}*[*(yellow) 1]{5+1,2+3,1}*[*(yellow) 2]{0,5+1,1+0}*[*(yellow) 3]{0,0,0,0+1}
					\end{displaymath}
					If those two columns numbers are $3$ and $4$, then we can choose $\alpha=(p-3,3)$ and $\beta=(p-2,1,1)$ as shown in the example below.
					\begin{displaymath}
							\ydiagram{5,2}*[*(yellow) 1]{0,2+2,0}*[*(yellow) 2]{0,0,2+2}*[*(lime) 1]{5+1,2+0,0+3}*[*(yellow) 2]{0,3+1,0}				
                            \longrightarrow
                            \ydiagram{4,3}*[*(yellow) 2]{0,0,2+0}*[*(lime) 1]{4+2,2+0,0+3}*[*(yellow) 2]{0,3+1,0}*[*(yellow) 3]{0,0,3+1}
					\end{displaymath}
					The first row of $\lambda/\alpha$ contains at least one cell (since $p\geq6$), hence the reverse row reading word is a lattice permutation.
					If those two columns numbers are $1$ and $3$, then we may choose $\beta=(p-2,1,1)$ by incrementing the entry $2$ in the cell $(4,1)$ by $1$.
					Finally, the remaining case is that the column numbers are $1$ and $2$. Now we may choose $\beta=(p-4,2,2)$ by incrementing all the entries in the first two columns by $1$. Again, the reverse row reading word is a lattice permutation since $p\geq 6$.
				\end{itemize}
				\item Suppose $\lambda_2=1$ and $\lambda_1\geq p$.
				Let us replace $\beta$ by $(p)$ and choose $\alpha$ as in the proof of Lemma~\ref{lemma:alphabeta}.
                Since $\beta\notin A_p$, if $\alpha\notin A_p$ (rep. $\alpha\notin B_p$) and $\beta\neq \alpha$, then we are done.
                Otherwise one of the following cases occurs.
				\begin{itemize}
					\item $\alpha = (p-1,1)$.
					In this case, $\lambda/\beta$ has $p-1$ columns, of which one column has exactly two cells (and the others have only one cell).
					The column with two cells has to be the first column of $\lambda/\beta$, since $\lambda$ is a hook.
					Incrementing its entries by $1$ allows us to replace $\beta$ by $(p-2,1,1)$.
					\item $\alpha = (2,1^{p-2})$ with $p$ odd.
					Then $\lambda/\beta$ has two columns, having $1$ and $p-1$ cells respectively.
					Since $\lambda$ is a hook, the column with one cell must lie in the first row and the column with $p-1$ cells has to be the first column.
					Incrementing the entries of the first column by $1$ allows us to replace $\alpha$ by $(1^p)$.
					\item $\alpha=(1^p)$ with $p$ even.
					All the cells of $\lambda/\beta$ must lie in the first column.
					We may replace $\beta$ by $(p-2,1,1)$ and $\alpha$ by $(2,1^{p-2})$ as shown in the following example.
					\begin{displaymath}
						\ytableaushort{{}{}{}{}{}{},1,2,3,4}*[*(lime)]{0,1,1,1,1}\longrightarrow
						\ytableaushort{{}{}{}{}11,{},{},2,3}*[*(lime)]{4+2,0,0,1,1}
					\end{displaymath}
					
					\item $\alpha=(p).$
					Then $\lambda=(p+r,p-r)$ with $0\leq r <p$. 
					For $2\leq r\leq p-2$, we may choose $\alpha=(p-2,2)$
					%since $\lambda_1\geq p+2 \text { and } \lambda_2\geq 2$ 
					as shown in the example below.
					
					\begin{displaymath}
						\ydiagram{7}*[*(lime) 2]{0,2+2}*[*(lime) 1]{7+2,0+2}*[*(lime) 1]{0,0,0}
					\end{displaymath}
					
					Otherwise $\lambda$ must be one of $(2p-1,1),(p+1,p-1),(p,p)$. 
					Suppose that $\lambda=(p,p)$. The fact that $c_{\alpha\beta}^{\lambda}>0$ with $\alpha,\beta\vdash p$ implies that $\alpha=\beta=(p-k,k)$ and $c_{\alpha\beta}^{\lambda}=1.$ Hence, when $\lambda=(p,p)$, no equation in the lemma is satisfied.					
					Now if $\lambda=(2p-1,1)$, then we may choose $\alpha=(p-1,1)$ and $\beta=(p)$ for Equations~\eqref{eq:non-trivial_trivial_odd},\eqref{eq:non-trivial_trivial_even}. The other equations are not satisfied when $\lambda=(2p-1,1)$.
					
					If $\lambda=(p+1,p-1)$, then we may choose $\alpha=(p-2,2)$ and $\beta=(p-3,3)$ as shown in the example below.
					
					\begin{displaymath}
						\ydiagram{4,2}*[*(lime) 2]{0,2+3}*[*(lime) 1]{4+3,0}
					\end{displaymath}
					
					% \item $\beta=(p)$. Then $\lambda=(p+r,p-r)$ where $0\leq r<p$.
					%We may replace $\beta$ by $(p-2,2)$ if $r\neq 0,1$ and $p-1$ as shown in the example below.
					
					%\begin{displaymath}
					%   \ydiagram{7}*[*(yellow) 2]{0,3+2}*[*(lime) 1]{7+2,0+3}*[*(lime) 1]{0,0,0}
					%\end{displaymath}
					%Otherwise $\lambda=(p,p) \text{ or } (p+1,p-1) \text{ or } (2p-1,1)$. Except the middle one $\lambda=(p+1,p-1)$, the others are one of the exceptions stated in the theorem.
					%Thus if $\lambda=(p+1,p-1)$, then we may take $\alpha=(p-2,2)$ and $\beta= (p-3,3)$. 
					
				\end{itemize}
				
				\item Suppose $\lambda_2=1$ and $\lambda_1=p-1$.
					In other words, $\lambda=(p-1,1^{p+1})$.
					Replace $\alpha$ by $(p-2,1,1)$.
					We could replace $\beta$ by $(2,1^{p-2})$ or $(1^{2p})$ depending on parity of $p$.
			\end{itemize}
		\end{itemize}
		\subsubsection*{Case 2: Suppose that $\lambda\supset (2,1^{p-2})$.}
		If $p$ is odd, then Equations~\eqref{eq:trivial_trivial_odd},~\eqref{eq:non-trivial_trivial_odd} are invariant under the involution $\omega$ which takes $s_\lambda$ to $s_{\lambda'}$.
		Thus in this case we can conjugate $\lambda$ so that $\lambda'$ contains $(p-1,1)$. Now we can use the previous case.
		Finally, the partitions for which the equations are unsatisfied are conjugate to those partitions for which the equations are unsatisfied in the previous case . Namely, the partitions $(2,1^{2p-2}),(2^p),(1^{2p})$ for \eqref{eq:trivial_trivial_odd} and  $(2^p),(1^{2p})$ for \eqref{eq:non-trivial_trivial_odd}.
		
		Now we shall consider the other equations \eqref{eq:trivial_trivial_even}, \eqref{eq:non-trivial_trivial_even}; thus $p$ is even.
		\begin{itemize}
			\item Suppose $\lambda_2\geq 2$. We may replace $\alpha$ by $(2,2,1^{p-4})$.
			Choose $\beta$ as in the proof of Lemma~\ref{lemma:alphabeta}.
			If $\beta\notin B_p$ and $\beta\neq\alpha$, then we are done.
			Otherwise one of the following cases must occur.
			\begin{itemize}
				\item $\beta=(p-1,1)$.
				In this case, $\lambda/\alpha$ has $p-1$ columns, with one column having two cells.
				The column with two cells has to be one of the first three columns.
				
				If the first or second column of $\lambda/\alpha$ has two cells, increment the entries of those two cells by $1$ to replace $\beta$ by $(p-2,1,1)$.
				Since $p\geq 6$, $\lambda/\alpha$ has at least one cell in the first row, so the reverse row reading word is a lattice permutation.
				
				Suppose the third column of $\lambda/\alpha$ has two cells.
				Since $\lambda\supset (2,1^{p-2})$, the first column of $\lambda/\alpha$ must have exactly one cell.
				Changing the entry of this cell from $1$ to $3$ gives us an LR-tableau of weight $(p-2,1,1)$.
				
				\item $\beta=(1^p)$ with $p$ even.
				In this case, $\lambda$ must be equal to $(2,2,1^{2p-4})$. One can easily check that indeed when $\lambda=(2,2,1^{2p-4})$ Equations \eqref{eq:trivial_trivial_even},~\eqref{eq:non-trivial_trivial_even} are not satisfied.
				\item $\beta=(2,2,1^{p-4})$.
				In this case, $\lambda/\alpha$ must have exactly two columns with one having $2$ cells and the other having $p-2$ cells. Notice that one of these columns must be the first column since $\lambda/\alpha$ contains at least one cell in the first column. If the second column contains $p-2$ cells, then $\lambda=(2^p)$. One can easily see that when $\lambda=(2^p)$, all the equations in the lemma are unsatisfied by using the involution $\omega$ and our argument for $\lambda=(p,p)$. Therefore, the first column must contain $p-2$ cells. Hence $\lambda=(2^4,1^{2p-8})\text{ or } (3,3,1^{2p-6}).$ In both cases the skew shape $\lambda/\alpha$ is a vertical strip, we may replace $\beta$ by $(1^p)$. 
			\end{itemize}
			\item Suppose $\lambda_2=1$ (so that $\lambda$ is a hook) and $\lambda_1\geq 3$.
			Replace $\alpha$ by $(3,1^{p-3})$ and choose $\beta$ as in the proof of
            Lemma~\ref{lemma:alphabeta}.
			Since $\alpha\notin A_p\cup B_p$, if $\beta\notin A_p$ and $\beta\neq\alpha$, then we are done.
			Otherwise one of the following cases must occur.
			\begin{itemize}
				\item $\beta = (p-1,1)$. In this case, $\lambda$ has $p-1$ columns, one having two cells and the other having only one cell.
				Since $\lambda$ is a hook, only the first column of $\lambda/\alpha$ can have two cells.
				Incrementing the entries in these cells by one will allow us to change $\beta$ to $(p-2,1,1)$.
				\item $\beta = (1^p)$.
				In this case, $\lambda$ must be $(3,1^{2p-3}).$ One can easily see that when $\lambda=(3,1^{2p-3})$ Equations \eqref{eq:non-trivial_trivial_even},\eqref{eq:trivial_trivial_even} are unsatisfied. 
				\item $\beta=(3,1^{p-3}).$ 
				In this case $\lambda/\alpha$  has exactly three columns with one having $p-2$ cells and the other two columns having one cells respectively.
				Since $\lambda$ is a hook, thus $\lambda=(5,1^{2p-5})$.
				We may replace $\beta$ by $(1^p)$.
			\end{itemize}
			\item Suppose $\lambda_2=1$ and $\lambda_1=2$.
			In this case $\lambda$ must be $(2,1^{2p-2})$, for which both Equations
            \eqref{eq:trivial_trivial_even},~\eqref{eq:non-trivial_trivial_even} are unsatisfied.
			% \begin{displaymath}
				%  \ydiagram{2,1,1,1,1}*[*(lime) 1]{0,0,0,0,0,0+1}*[*(lime) 2]{0,0,0,0,0,0,1}*[*(lime) 3]{0,0,0,0,0,0,0,1}*[*(lime) 4]{0,0,0,0,0,0,0,0,1}\longrightarrow
				% \ydiagram{1,1,1,1,1,1}*[*(lime) 1]{1+1,0,0,0,0,0,0+1}*[*(lime) 2]{0,0,0,0,0,0,0,1}*[*(lime) 3]{0,0,0,0,0,0,0,0,1}*[*(lime) 4]{0,0,0,0,0,0,0,0,1}
				%\quad \text{ or }\quad
				%\ydiagram{1,1,1,1,1,1}*[*(lime) 1]{1+1,0,0,0,0,0,0}*[*(lime) 2]{0,0,0,0,0,0,1}*[*(lime) 3]{0,0,0,0,0,0,0,1}*[*(lime) 4]{0,0,0,0,0,0,0,0,1}
				%\end{displaymath}
			\end{itemize}
			\noindent
   Finally, let us consider the case when $\lambda$ does not contain both $(p-1,1)$ and $(2,1^{p-2})$.
    Equivalently, $\lambda_1,\lambda_1'<p-1$. Note that we have assumed $\lambda\neq(2p),(1^{2p})$.
			Thus, $l(\lambda)>2$ and $\lambda_1>2$.
			Let $\alpha$ be a partition of $p$ contained in $\lambda$ which is maximal in the dominance order. Clearly, $\alpha\notin A_p\cup B_p$. Let us choose $\beta$ as in the proof of Lemma~\ref{lemma:alphabeta}. If $\beta\neq \alpha$, then we are done since $\beta\notin A$.
   
   Otherwise, $\alpha=\beta$.
	Let us replace $\alpha$ using the following procedure.
			\begin{enumerate}
				\item Choose all the cells in the first row of $\lambda$.
				\item Since $\lambda_1<p-1$, we need to choose more cells to construct a partition $\alpha$ of $p$. In this step, we choose cells in the first column from the top.
				\item Choose the rest of the cells for $\alpha$ in any manner the reader wish if required.
			\end{enumerate}
	Now we have constructed our $\alpha$.		
   Let us choose $\beta$ as in the proof of Lemma~\ref{lemma:alphabeta}.
   
	Suppose that we used step 3 of the algorithm in the construction of $\alpha$. Then clearly $\beta_1<\alpha_1 $ and hence $\alpha\neq\beta$. Thus, we are done.
    
		Otherwise, $\alpha=(\lambda_1,1^{p-\lambda_1})$.
			If $\lambda_3>2$, then clearly $\beta_2>1$. Therefore, $\alpha\neq\beta$ and we are done.
			Hence, we may assume that $\lambda_3\leq 2$.
            			
			Suppose that $\lambda_3=2$. We also have $\lambda_1=\lambda_2$ since $\alpha_1=\beta_1$.
			Note that the first column of $\lambda/\alpha$ contains exactly one cell due to $\alpha_1=\beta_1$, $\alpha_2=1$ and $\lambda_3=2$.
			Since $\alpha=\beta=(\lambda_1,1^{p-\lambda_1})$, we must have $\lambda=(\lambda_1,\lambda_1,2^{p-\lambda_1})$.
			In this case we may choose $\alpha=(\lambda_1-1,1^{p-\lambda_1+1})$ and $\beta=(\lambda_1-1,2^2,1^{p-\lambda_1+3})$ as shown in the example below. Note that $\alpha,\beta\notin\{(p),(p-1,1),(2,1^{p-2}),(1^p)\}$ due to $\lambda_1,\lambda_1'<p.$
			
			\begin{displaymath}
				\ydiagram{4,1,1}*[*(lime) 1]{0,1+3,0,0+1}*[*(lime) 2]{0,0,1+1,0,0}*[*(lime) 3]{0,0,0,1+1}\longrightarrow
				\ydiagram{3,1,1,1}*[*(lime) 1]{3+1,1+2,0,0}*[*(lime) 2]{0,3+1,1+1,0,0}*[*(lime) 3]{0,0,0,1+1}
			\end{displaymath}
			
			Finally let $\lambda_3=1$ and hence $\lambda=(\lambda_1,\lambda_1,1^{2p-2\lambda_1}).$
			We may choose $\alpha=(\lambda_1-1,1^{p-\lambda_1+1})$ and $\beta=(\lambda_1,2,1^{p-\lambda_1-2})$ as shown in the example below.

			\begin{displaymath}
				\ydiagram{4,1,1}*[*(lime) 1]{0,1+3,0,0+1}*[*(lime) 2]{0,0,0,0,0+1}*[*(lime) 3]{0,0,0,0,0,0+1}\longrightarrow
				\ydiagram{3,1,1,1}*[*(lime) 1]{3+1,1+2,0,0,0+1}*[*(lime) 2]{0,3+1,0,0,0,0+1}
			\end{displaymath}

			%Otherwise $\lambda$ is a double hook. i.e., $\lambda=(a,b,2^c,1^d)$.
			%In fact, $\lambda_1=\lambda_2$ and .... $\lambda=(a,a,2^b,1^c)$ Otherwise .
			
			%Let now choose $\alpha$ to be the most dominant partition contained in $\lambda$ and $\beta$ also the most dominant. If $\lambda_2=\alpha_2$ then clearly $\alpha\neq\beta$. Otherwise $\alpha$ has only two parts and $\alpha_2\geq 2$.
			
			%Note that, current scenario implies that $\lambda_3,\lambda_4\geq 1$
			
			%If $\lambda_2-\alpha_2\geq 2$, then we can choose $\beta=(\lambda_2-\alpha_2,\lambda_3,\lambda_4,\dots$ which is clearly not equal to $\alpha$ and we are done.
			
			%Otherwise, $\lambda_2-\alpha_2=1.$
			
			%In this case, since $m\geq 6$ so $\lambda_5\geq 1$. Thus the most dominant $\beta$ would have atleast three parts, hence not equal to $\alpha$.
			
			This completes the proof.
		\end{proof}
		
		\begin{lemma}
			\label{lemma:more than two parts}
			Let $p\geq 7$ be a positive odd integer.
			Then 
			\begin{equation}\label{eq:trivial_trivial_trivial_odd}
				\sum\limits_{\substack{\alpha\vdash p \\ \alpha\neq(2p),(2p-1,1),(p,p)\\ \alpha\neq (2^p),(2,1^{2p-2}),(1^{2p})}}s_\alpha \sum\limits_{\substack{\beta\vdash p \\ \beta\neq (p-1,1),(2,1^{p-2})}}s_\beta \geq s_\lambda
        \end{equation}
			for all partitions $\lambda$ of $3p$ except when 
			$\lambda=(3p),(3p-1,1),(2,1^{3p-2}),(1^{3p}),$ and 
			
			\begin{equation}\label{eq:non_trivial_trivial_odd}
				\sum\limits_{\substack{\alpha\vdash p \\ \alpha\neq(2p),(p,p)\\ \alpha\neq (2^p),(1^{2p})}}s_\alpha \sum\limits_{\substack{\beta\vdash p \\ \beta\neq (p-1,1),(2,1^{p-2})}}s_\beta \geq s_\lambda
			\end{equation}
			for all partitions $\lambda$ of $3p$ except when 
			$\lambda=(3p),(1^{3p}).$ 
			
		\end{lemma}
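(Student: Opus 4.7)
The plan is to extend the case-by-case strategy of Lemma~\ref{lemma:base} by one additional factor. Given $\lambda\vdash 3p$ not in the stated exception list, I aim to exhibit $\alpha\vdash 2p$ outside the exception set $E_1$ of the relevant equation of Lemma~\ref{lemma:base} (six shapes for~\eqref{eq:trivial_trivial_odd}, four for~\eqref{eq:non-trivial_trivial_odd}), together with $\beta\vdash p$ with $\beta\notin A_p=\{(p-1,1),(2,1^{p-2})\}$, such that $c^{\lambda}_{\alpha\beta}>0$. Using the symmetry $c^{\lambda}_{\alpha\beta}=c^{\lambda}_{\beta\alpha}$, I pick $\beta\subset\lambda$ first with $\beta\notin A_p$, then apply the construction of Lemma~\ref{lemma:alphabeta} to the skew shape $\lambda/\beta$ to read off $\alpha$ as the weight of $T_{\lambda\beta}$.

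The natural choices are $\beta=(p)$ when $\lambda_{1}\geq p$ and $\beta=(1^{p})$ when $\lambda_{1}'\geq p$; in the generic case $\lambda_{1},\lambda_{1}'<p$, I select an intermediate partition such as $(p-2,2)$ or $(p-2,1,1)$ (both outside $A_p$) contained in $\lambda$. For most sub-cases the resulting weight $\alpha\vdash 2p$ already lies outside $E_1$; when it does not, I modify the LR tableau locally, by incrementing entries in a column or shifting cells between the first two columns or rows, in exactly the same style as the modifications in the proof of Lemma~\ref{lemma:base}. The hypothesis $p\geq 7$ provides ample room to perform these adjustments without creating a forbidden weight.

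The main obstacle is the detailed combinatorial bookkeeping required to confirm that the list of exceptional $\lambda$ is exactly $\{(3p),(3p-1,1),(2,1^{3p-2}),(1^{3p})\}$ for~\eqref{eq:trivial_trivial_trivial_odd} and $\{(3p),(1^{3p})\}$ for~\eqref{eq:non_trivial_trivial_odd}. For each putative exception I verify directly from the Littlewood--Richardson rule that every decomposition $\lambda=\alpha+\beta$ with $|\alpha|=2p$ and $|\beta|=p$ forces $\alpha\in E_1$ or $\beta\in A_p$ (e.g.\ for $\lambda=(3p)$ the only such decomposition is $\alpha=(2p), \beta=(p)$, and $(2p)\in E_1$). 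Conversely, the shapes $(p,p)$, $(2^p)$, $(2p-1,1)$ and $(2,1^{2p-2})$ were exceptional for Lemma~\ref{lemma:base} but are \emph{not} exceptional here, because the enlarged pool of admissible $\beta$ (all partitions of $p$ outside $A_p$, rather than the diagonal-free pairs required in Lemma~\ref{lemma:base}) gives new decompositions; I treat these cases by explicitly writing down a working pair $(\alpha,\beta)$, e.g.\ using $\beta=(p)$ or $\beta=(1^{p})$ when the first row or first column of $\lambda$ is sufficiently long. The rest of the argument is the sub-case analysis (hook, near-hook, two-row, two-column, and generic shapes), closely paralleling the structure of the proof of Lemma~\ref{lemma:base}, carried out one level higher.
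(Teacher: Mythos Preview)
Your plan is essentially the paper's own: choose $\beta\vdash p$ outside $A_p$ with $\beta\subset\lambda$, read off $\alpha\vdash 2p$ from the Lemma~\ref{lemma:alphabeta} tableau $T_{\lambda\beta}$, and repair by local modifications when $\alpha$ falls into the exceptional set. Two organizational differences are worth noting. First, the paper invokes Lemma~\ref{lemma:choose-beta} once to guarantee a valid $\beta$ (with $f_{(p)}\geq s_\beta$, i.e.\ $\beta\notin A_p$) rather than selecting $\beta\in\{(p),(1^p),(p-2,2),\dots\}$ by hand; this avoids having to argue separately that such a $\beta$ fits inside $\lambda$ in the generic case $\lambda_1,\lambda_1'<p$. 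Second, the paper structures the case analysis by the value of $\alpha\in H=\{(2p),(2p-1,1),(p,p),(2^p),(2,1^{2p-2}),(1^{2p})\}$ rather than by the shape of $\lambda$, and then uses that the left side of~\eqref{eq:trivial_trivial_trivial_odd} is invariant under the involution $\omega$ to dispose of the last three cases by conjugating to the first three. This halves the work and is a trick you do not mention; your shape-of-$\lambda$ organization would also succeed but is longer.
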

		\begin{proof}
			We shall prove the first statement of the lemma.
			The latter statement follows from the former statement and the fact that \eqref{eq:non_trivial_trivial_odd} holds when $\lambda=(3p-1,1),(2,1^{3p-2}).$
			
			Let $H:=\{(2p),(2p-1,1),(p,p), (2^p),(2,1^{2p-2}),(1^{2p})\}.$
			For $\lambda=(3p) \text{ or } (1^{3p})$, the lemma is obvious.
			We may assume that $\lambda\vdash 3p$ and $\lambda\neq (3p),(1^{3p}).$
			We may choose $\alpha\vdash 2p$, $\beta\vdash p$ such that $s_\alpha s_\beta \geq s_\lambda$ with $\beta\neq (p-1,1),(2,1^{p-2})$ by Lemma~\ref{lemma:choose-beta}.
			If $\alpha\notin H$, then we are done.
			Otherwise one of the following cases must occur.

			\begin{itemize}
				\item $\alpha=(2p)\text{ or } (2p-1,1) $.
				Suppose that $\lambda_1'> 2$.
				Then we may replace $\alpha$ by $(2p-2,1,1)$ and choose $\beta$ as in the proof of Lemma~\ref{lemma:alphabeta}.
				If $\beta\neq (p-1,1),(2,1^{p-2}), $ then we are done.
				Otherwise, one of the following cases must occur.
				\begin{itemize}
					\item $\beta=(p-1,1).$
					Then $\lambda/\alpha$ has exactly $p-1$ columns with one of them having two cells.
					Either the first column or the second column can have two cells.
					If the first column has two cells, then we may increment the entries in the first column by $1$ and we replace $\beta$ by $(p-2,1,1)$.
					If the second column has two cells, then we can increment the last entry $1$ in the second row. Thus we can replace $\beta$ by $(p-2,1,1)$ as shown in the example below.
					
					\begin{displaymath}
						\ydiagram{12,1,1}*[*(yellow) 1]{12+2,2+2,0,0+1}*[*(yellow) 2]{0,0,0}*[*(yellow) 3]{0,0,0,0}*[*(lime) 1]{0,1+1,0}*[*(lime) 2]{0,0,1+1}\longrightarrow
						\ydiagram{12,1,1}*[*(yellow) 1]{12+2,1+2,0,0+1}*[*(yellow) 2]{0,0,0}*[*(yellow) 3]{0,0,0,0}*[*(lime) 1]{0,0,0}*[*(lime) 2]{0,3+1,0}*[*(lime) 3]{0,0,1+1}
					\end{displaymath}
					
					\item $\beta=(2,1^{p-2})$.
					In this case $\lambda/\alpha$ has exactly two columns with one having $p-2$ cells and the other having a single cell.
					Since $\lambda/\alpha$ contains at least one cell in its first row,
					we conclude that $\lambda=(2p,1^p)$. We shall replace $\beta$ by $(1^p)$.
				\end{itemize}
				Suppose that $\lambda_1'=2$, i.e., $\lambda$ is a two-row partition $(2p+r,p-r)$ with $1\leq r< p$.
				If $r=p-1$, then $\lambda=(3p-1,1)$. In this case we can easily see that equation~\eqref{eq:trivial_trivial_trivial_odd} does not hold.
				For $1\leq r < p-1$ we may replace $\alpha$ by $(2p-2,2)$ and $\beta$ by $(p)$.

				\item $\alpha=(p,p)$.
				Suppose that $\lambda_1'\geq  3$.
				We replace $\alpha$ by $(p,p-1,1)$ and choose $\beta$ as in the proof of Lemma~\ref{lemma:alphabeta}.
				If $\beta\neq (p-1,1),(2,1^{p-2})$, then there is nothing to prove.
				Otherwise, one of the following cases occurs.
				\begin{itemize}
					\item $\beta=(p-1,1)$.
					In this case, $\lambda/\alpha$ has $p-2$ columns with $p-1$ columns having a single cell and one column having two cells. Then one of the first, second, $p$th or $p+1$th column has two cells.
					If the first column has two cells, then we may replace $\beta$ by $(p-2,1,1)$ as shown in the example below.
					
					\begin{displaymath}
						\ydiagram{7,6,1}*[*(yellow) 1]{7+2,6+1,1+2,0}*[*(yellow) 2]{0,0,0}*[*(yellow) 3]{0,0,0,0}*[*(lime) 1]{0,0,0}*[*(lime) 1]{0,0,0,0+1}*[*(lime) 2]{0,0,0,0,0+1}\longrightarrow
						\ydiagram{7,6,1}*[*(yellow) 1]{7+2,6+1,1+2,0}*[*(yellow) 2]{0,0,0}*[*(yellow) 3]{0,0,0,0}*[*(lime) 1]{0,0,0}*[*(lime) 1]{0,0,0,0}*[*(lime) 2]{0,0,0,0+1}*[*(lime) 3]{0,0,0,0,0+1}
					\end{displaymath}
					
					If the second column has two cells, then we may replace $\beta$ by $(p-2,2)$ as shown in the example below.
					
					\begin{displaymath}
						\ydiagram{7,6,1}*[*(yellow) 1]{7+1,6+1,2+2,0+1}*[*(yellow) 2]{0,0,0}*[*(yellow) 3]{0,0,0,0}*[*(lime) 1]{0,0,1+1,0}*[*(lime) 2]{0,0,0,1+1}\longrightarrow
						\ydiagram{7,6,1}*[*(yellow) 1]{7+1,6+1,2+2,0}*[*(magenta) 2]{0,0,0,0+1}*[*(yellow) 3]{0,0,0,0}*[*(lime) 1]{0,0,1+1,0}*[*(lime) 2]{0,0,0,1+1}
					\end{displaymath}
					If the $p$th column has two cells, then we may replace $\alpha$ by $(p-1,p-1,2)$  and $\beta$ by $(p-2,1,1)$ as shown in the example below.
					
					\begin{displaymath}
						\ydiagram{7,6,1}*[*(yellow) 1]{0,6+0,1+5,0+0}*[*(yellow) 2]{0,0,0}*[*(yellow) 3]{0,0,0,0}*[*(lime) 1]{0,6+1,1+0,0}*[*(lime) 2]{0,0,6+1}\longrightarrow
						\ydiagram{6,6,2}*[*(yellow) 1]{0,6+0,2+4,0+0}*[*(yellow) 2]{0,0,0}*[*(yellow) 3]{0,0,0,0}*[*(lime) 1]{6+1,1+0,0}*[*(lime) 2]{0,6+1}*[*(lime) 3]{0,0,6+1}
					\end{displaymath}
					
					If the $p+1$th column has two cells, then we may replace $\beta$ by $(p-2,2)$ if $(4,1)$th cell lies in $\lambda/\alpha$ as shown in the example below.
					
					\begin{displaymath}
						\ydiagram{7,6,1}*[*(yellow) 1]{8+2,6+1,1+2,0+1}*[*(yellow) 2]{0,0,0}*[*(yellow) 3]{0,0,0,0}*[*(lime) 1]{7+1,1+0,0}*[*(lime) 2]{0,7+1}\longrightarrow
						\ydiagram{7,6,1}*[*(yellow) 1]{8+2,6+1,1+2,0}*[*(yellow) 2]{0,0,0,0}*[*(yellow) 3]{0,0,0,0}*[*(lime) 1]{7+1,1+0,0}*[*(lime) 2]{0,7+1}*[*(magenta) 2]{0,0,0,0+1}
					\end{displaymath}
					
					Otherwise, we may replace $\alpha$ by $(p+1,p-1)$ and $\beta$ by $(p)$ as shown in the example below.
					\begin{displaymath}
						\ydiagram{7,6,1}*[*(yellow) 1]{8+2,6+1,1+2,0}*[*(yellow) 2]{0,0,0}*[*(yellow) 3]{0,0,0,0}*[*(lime) 1]{7+1,1+0,0}*[*(lime) 2]{0,7+1}\longrightarrow
						\ydiagram{8,6,1}*[*(yellow) 1]{8+2,6+2,0+3,0}*[*(yellow) 2]{0,0,0,0}*[*(yellow) 3]{0,0,0,0}*[*(lime) 1]{0,1+0,0}*[*(lime) 2]{0,0}
					\end{displaymath}
					
					\item $\beta=(2,1^{p-2})$.
					In this case, $\lambda/\alpha$ has exactly two columns, with one having $p-1$ cells and the other having a single cell.
					Since $\lambda/\alpha$ has a cell in its second row, thus $\lambda=(p,p,1^p)$.
					We may replace $\beta=(1^p)$ since $\lambda/\alpha$ is a vertical strip.
				\end{itemize}
				
				Note that the LHS of Equation~\eqref{eq:trivial_trivial_trivial_odd} is invariant under the involution $\omega$. Thus, for the other cases $\alpha=(2^p),(2,1^{2p-2}),(1^{2p})$ we use the above cases and the involution $\omega$ to prove the lemma.        
			\end{itemize}
		\end{proof}
		
		\begin{lemma}\label{Lemma:gen_case}
			Let $p\geq 7$ be a positive odd integer.
			Then 
			\begin{equation}\label{eq:trivial_trivial_trivial_trivial_odd}
				\bigg( \sum\limits_{\substack{\beta\vdash p \\ \beta\neq (p-1,1),(2,1^{p-2})}}s_\beta \bigg) ^j\sum\limits_{\substack{\alpha\vdash p \\ \alpha\neq(2p),(2p-1,1),(p,p)\\ \alpha\neq (2^p),(2,1^{2p-2}),(1^{2p})}}s_\alpha  \geq s_\lambda
			\end{equation}
			for all $j\geq 1$ and for all partitions $\lambda$ of $n$ $(=(j+2)p)$ except possibly when 
			$\lambda=(n),(n-1,1),(2,1^{n-2}),(1^{n})$ and 
			
			\begin{equation}\label{eq:non_trivial_trivial_trivial_odd}
				\bigg( \sum\limits_{\substack{\beta\vdash p \\ \beta\neq (p-1,1),(2,1^{p-2})}}s_\beta \bigg) ^j \sum\limits_{\substack{\alpha\vdash p \\ \alpha\neq(2p),(p,p)\\ \alpha\neq (2^p),(1^{2p})}}s_\alpha  \geq s_\lambda
			\end{equation}
			for all $i\geq 1$ and for all partitions $\lambda$ of $n$ $(=(j+2)p)$ except when 
			$\lambda=(n),(1^{n}).$     
		\end{lemma}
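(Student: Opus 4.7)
The plan is to prove both parts of Lemma~\ref{Lemma:gen_case} by induction on $j \geq 1$. The base case $j = 1$ is exactly Lemma~\ref{lemma:more than two parts}, already established. For the inductive step, assume the statement holds for $j-1 \geq 1$, and fix a partition $\lambda \vdash n := (j+2)p$ lying outside the appropriate exception set. I aim to produce a partition $\beta \vdash p$ with $\beta \notin \{(p-1,1),(2,1^{p-2})\}$ together with a partition $\gamma \vdash (j+1)p$ that is not an exception for the $(j-1)$-instance of the lemma, and which satisfies $s_\beta s_\gamma \geq s_\lambda$. Once these are found, the chain
\begin{displaymath}
\Bigl(\sum_{\beta'} s_{\beta'}\Bigr)^{j}\sum_\alpha s_\alpha \;\geq\; s_\beta\cdot\Bigl(\sum_{\beta'} s_{\beta'}\Bigr)^{j-1}\sum_\alpha s_\alpha \;\geq\; s_\beta s_\gamma \;\geq\; s_\lambda
\end{displaymath}
closes the induction, using the hypothesis applied to $\gamma$ in the middle step and $\sum_{\beta'} s_{\beta'} \geq s_\beta$ at the outside.

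To construct the pair $(\beta,\gamma)$, I would first apply Lemma~\ref{lemma:choose-beta} with $q = p$ to select $\beta \vdash p$ with $\beta \subset \lambda$ and $f_{(p)} \geq s_\beta$; for odd $p \geq 7$ the constituents of $f_{(p)}$ are exactly those $s_\beta$ with $\beta \notin \{(p-1,1),(2,1^{p-2})\}$ (a direct consequence of Theorem~\ref{theorem:Main_min}), so the side condition on $\beta$ comes for free. I would then let $\gamma \vdash (j+1)p$ be the weight of the skew semistandard tableau $T_{\lambda\beta}$ constructed in the proof of Lemma~\ref{lemma:alphabeta}, which by construction satisfies $s_\beta s_\gamma \geq s_\lambda$.

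The core work is verifying that $\gamma$ can be arranged to avoid the four-element forbidden list $\{((j+1)p),\,((j+1)p-1,1),\,(2,1^{(j+1)p-2}),\,(1^{(j+1)p})\}$. Each forbidden shape for $\gamma$ forces $\lambda/\beta$ into an extremely restrictive strip or near-hook configuration; whenever such a situation occurs, I would modify the selection of cells for $\beta$ inside $\lambda$ (keeping $\beta$ off the excluded list) in order to move $\gamma$ out of the forbidden list, in the same spirit as the case analyses carried out in Lemmas~\ref{lemma:base} and~\ref{lemma:more than two parts}. Both the LHS and the exception set in each of~\eqref{eq:trivial_trivial_trivial_trivial_odd} and~\eqref{eq:non_trivial_trivial_trivial_odd} are invariant under the conjugation involution $\omega : s_\lambda \mapsto s_{\lambda'}$, so it suffices to handle one representative of each conjugate pair of troublesome configurations.

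The hard part is this final case analysis, rather than the induction scaffolding itself. However, for $j \geq 2$ the shape $\lambda$ has size $n \geq 4p \geq 28$, strictly larger than the $3p$ of the base case, which gives substantially more geometric room: most non-exceptional $\lambda$ are handled by a single direct application of Lemmas~\ref{lemma:choose-beta} and~\ref{lemma:alphabeta} without any modification, and the few residual cases reduce, via $\omega$, to the corresponding manoeuvres already performed in Lemma~\ref{lemma:more than two parts}.
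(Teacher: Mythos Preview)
Your proposal is correct and is essentially a fleshed-out version of the paper's own one-line proof, which simply says ``The proof follows using the proof of Lemma~\ref{lemma:more than two parts} iteratively.'' Your induction on $j$ with base case Lemma~\ref{lemma:more than two parts}, peeling off one factor $s_\beta$ via Lemmas~\ref{lemma:choose-beta} and~\ref{lemma:alphabeta} and then fixing up the residual $\gamma\vdash (j+1)p$ by the same manoeuvres as in Lemma~\ref{lemma:more than two parts}, is exactly what that sentence means; the forbidden list for $\gamma$ is even a proper subset (no $(p,p)$ or $(2^p)$ analogue) of the set $H$ handled there, so the needed case analysis is strictly contained in what has already been carried out.

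One small expository point: your appeal to Theorem~\ref{theorem:Main_min} to identify the constituents of $f_{(p)}$ is a forward reference, since Lemma~\ref{Lemma:gen_case} is itself an ingredient in the proof of Theorem~\ref{theorem:Main_min}. However, you only use the implication $f_{(p)}\geq s_\beta \Rightarrow \beta\notin\{(p-1,1),(2,1^{p-2})\}$, which is the elementary direction (a direct check on the standard representation and its conjugate) and does not require Theorem~\ref{theorem:Main_min}; so there is no genuine circularity, and the paper's own proof of Lemma~\ref{lemma:more than two parts} relies on this same implication through its use of Lemma~\ref{lemma:choose-beta}.
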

		\begin{proof}
			The proof follows using the proof of Lemma~\ref{lemma:more than two parts} iteratively.
		\end{proof}

		\begin{proof}[Proof of Theorem~\ref{theorem:Main_min}]\label{proof_of_Theorem:main_min}
            We prove the theorem by induction on $n$.
            We can easily verify the theorem when $n\leq 26$ by using sage for example.
            So assume that $n>26$.
			Let us also assume that the theorem holds for $k<n$ by induction.
			We prove the theorem when $k=n$.
			
			If $n$ is a prime, then a small calculation yields the theorem.
			One of the simple calculation would be the following.
			Consider the matrix $\rho_\lambda(w_{(n)})$.
			When $\lambda$ is not a hook partition, we have that $\chi_\lambda(w_{(n)})=\text{trace}(\rho_\lambda(w_{(n)}))=0$ by the Murnaghan-Nakayama rule.
			We recall that the characteristic polynomial of $\rho_\lambda(w_{(n)})$ is in $\mathbb{Z}[x]$.
			Since the sum of all the eigenvalues of $\rho_\lambda(w_{(n)})$ is $0$ and the only possible eigenvalues are $p$th roots of unity, we have that all $pth$ roots of unity occurs as an eigenvalue of $\rho_\lambda(w_{(n)})$.
			When $\lambda$ is a hook partition ($\neq (n),(n-1,1),(2,1^{n-2}),(1^n)$), we notice that $\chi_\lambda(w_{(n)})=\text{trace}(\rho_\lambda(w_{(n)}))=(-1)^{n-\lambda_1}.$
            Since $\dim(V_\lambda)>n-1$ and sum of non-trivial $p$th roots of unity is $-1$, we conclude that all $p$th roots of unity occurs as an eigenvalue of $\rho_\lambda(w_{(n)})$.
			When $\lambda$ equal to one of $(n),(n-1,1),(2,1^{n-2}),$ or $(1^n)$, one can easily verify the theorem.
            
            \smallskip
			\noindent Now we may assume that $n$ is a composite number.
			
			Suppose that $n$ is odd. Write $n=mp$ where $p$ is the smallest prime divisor of $n$. Since $n\geq 26$, we have that $m$ is an odd positive integer greater than or equal to $7$.
            Let $\lambda\vdash n$.
            If $\lambda$ is equal to one of $(n),(n-1,1),(2,1^{n-2}),(1^n)$, then one can easily verify the theorem by direct computation.
            Otherwise, using Lemma~\ref{Lemma:gen_case} and Proposition~\ref{proposition:Giannelli}, the theorem follows.
			%Then we may use induction, Lemma~\ref{lemma:more than two parts} and proposition~\ref{proposition:Giannelli}, we get $f_{n}\geq s_\lambda$ for all partitions $\lambda\vdash mn$ except possibly when $\lambda=(n),(m-1,1),(2,1^{m-2}),(1^m).$  
			%The above exceptions can be easily checked and this completes the proof of the theorem when $n$ is odd.
            
            Suppose that $n$ is even.
			We write $n=mp$ where $m=n/2$ and $p=2$. Note that $m\geq 10$.
            Let $\lambda\vdash n$ and not equal to any of $(n),(n-1,1),(n/2,n/2),(3,1^{n-3}),(2^{n/2}),(2,2,1^{n-4}),(2,1^{n-2}),(1^{n}).$
            Then using Lemma~\ref{lemma:base} and Proposition~\ref{proposition:Giannelli}, the theorem follows.

            Finally, if $\lambda$ is equal to any of $(n),(n-1,1),(n/2,n/2),(3,1^{n-3}),(2^{n/2}),(2,2,1^{n-4}),(2,1^{n-2}),(1^{n})$, one can directly compute (for example the inner product of characters) to prove the theorem except may be when $\lambda=(n/2,n/2)$ or  $(2^{n/2})$.
            
            Suppose that $\lambda=(n/2,n/2)$.
            If $n$ is divisible by $4$, then $n=4m$ (note that $m\geq 6$) and we may choose $\mu^1=(m,m), ~\mu^2=(m+1,m-1), ~\mu^3=(m+2,m-2), ~\mu^4=(m+1,m-1)$ in Corollary~\ref{corollary:Giannelli_three_distinct}, to get the theorem.

            Otherwise, $n=mp$ where $p$ is the least prime dividing $n$ other than $2$.
            If $m\geq 6$, then we may choose $\mu^1=(m-2,2)$ and $\mu^2=\mu^3=\dotsc=\mu^p=(m)$ in the proposition~\ref{proposition:Giannelli} and we are done.
            Otherwise, $m=2$ since $m$ is even and not divisible by $4$.
            Now the theorem follows from Corollary~\ref{corollary:Giannelli_rectangular_two_row}.

            Since $\chi_{(n/2,n/2)}\otimes\chi_{(1^n)}=\chi_{(2^{n/2})}$, and we have shown that $\rho_{(n/2,n/2)}(w_n)$ has minimal polynomial $x^n-1$, it follows that the theorem is true when $\lambda=(2^{n/2})$.

            This completes the proof.
            
%			Then using Lemmas~\ref{lemma:base},~\ref{lemma:mn-cycle} and proposition~\ref{proposition:Giannelli}, we get $f_{n}\geq s_\lambda$ for all partitions $\lambda\vdash n$ except possibly when
			
%			$$\lambda=(n),(n-1,1),(n/2,n/2),(3,1^{n-3}),(2^{n/2}),(2,2,1^{n-4}),(2,1^{n-2}),(1^{n}).$$  
%			The above exceptions can be easily checked except may be when $\lambda=(n/2,n/2)$ or $(2^{n/2})$.
%           It suffices to prove the theorem when $\lambda=(n/2,n/2)$.
%            Now the theorem follows from the following Lemma~\ref{lemma:n/2_n/2}.
		\end{proof}

\section{Proof of Theorem~\ref{Theorem:main}} \label{section:Proof of Theorem Theorem:main}

The following are the necessary lemmas to prove Theorem~\ref{Theorem:main}.

        \begin{lemma}\label{lemma:Yang_stareletov}
            Let $\gamma=(p,q)$ be a partition of $n$ with $p\geq 3$ and $q\geq1$.
            Then 
            \begin{equation}\label{eq:Yang_Stareletov}
               \ch\Ind_{C_{(p,q)}}^{ S_n} \widetilde{\zeta\times1} \geq s_\lambda
            \end{equation}
            for all partitions $\lambda\vdash n$ except when $\lambda=(n),(1^n)$, where $\zeta$ is any non-real one dimensional character of $C_p$, $1$ denotes the trivial character of $C_q$ and $\widetilde{\zeta\times1}:=\Res_{C_{(p,q)}}^{C_p\times C_q} \zeta\times 1$.
        \end{lemma}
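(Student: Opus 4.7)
The plan is to use induction by stages to obtain a lower bound on $\ch\Ind_{C_{(p,q)}}^{S_n}\widetilde{\zeta\times 1}$ in terms of the simpler pieces $\ch\Ind_{C_p}^{S_p}\zeta$ and $f_q$, and then run a Littlewood--Richardson case analysis in the spirit of Lemma~\ref{lemma:base}.

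From the chain of subgroups $C_{(p,q)}\le C_p\times C_q\le S_p\times S_q\le S_n$ and the observation preceding Equation~\eqref{Eq:cyclic_cycles_trivial}, one obtains
\begin{equation*}
\Ind_{C_{(p,q)}}^{S_n}\widetilde{\zeta\times 1}\;\geq\;\Ind_{S_p\times S_q}^{S_n}\bigl(\Ind_{C_p}^{S_p}\zeta\times\Ind_{C_q}^{S_q}\mathbf{1}\bigr),
\end{equation*}
so $\ch\Ind_{C_{(p,q)}}^{S_n}\widetilde{\zeta\times 1}\geq\ch(\Ind_{C_p}^{S_p}\zeta)\cdot f_q$. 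It therefore suffices to exhibit, for each $\lambda\vdash n$ with $\lambda\ne(n),(1^n)$, a pair $(\alpha,\beta)$ with $\alpha\vdash p$, $\beta\vdash q$, $s_\alpha s_\beta\geq s_\lambda$, $s_\alpha\leq\ch\Ind_{C_p}^{S_p}\zeta$, and $s_\beta\leq f_q$.

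I first identify the admissible factors. By Theorem~\ref{theorem:Main_min}, the minimal polynomial of $\rho_\alpha(w_p)$ equals $x^p-1$ for every $\alpha\vdash p$ apart from those listed there. Since $\zeta$ is non-real we have $\zeta\ne 1$ and $\zeta\ne(-1)^{p-1}$, so Theorem~\ref{Theorem:Kra_wey} yields $s_\alpha\leq\ch\Ind_{C_p}^{S_p}\zeta$ for all $\alpha\vdash p$ with $\alpha\notin\{(p),(1^p)\}$, with the possible additional removal of $(2,2)$ if $p=4$, and of $(3,3),(2,2,2)$ if $p=6$ and $\zeta$ is a primitive sixth root of unity. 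On the other side, Theorem~\ref{conjecture:Sundaram} guarantees $f_q\geq s_\beta$ for every $\beta\vdash q$ except $(q-1,1)$, together with either $(2,1^{q-2})$ (if $q$ is odd) or $(1^q)$ (if $q$ is even).

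The combinatorial heart is to produce an allowed pair $(\alpha,\beta)$ for every $\lambda\ne(n),(1^n)$. The first attempt is the following: use Lemma~\ref{lemma:choose-beta} to pick $\beta\vdash q$ contained in $\lambda$ with $f_q\geq s_\beta$, and then Lemma~\ref{lemma:alphabeta}, with the roles of $\alpha$ and $\beta$ interchanged (for instance by applying the involution $\omega$ to reduce to the stated form), to extract $\alpha\vdash p$ with $s_\alpha s_\beta\geq s_\lambda$. If the resulting $\alpha$ is admissible we are done. Otherwise one modifies the canonical skew LR-tableau $T_{\lambda\beta}$ exactly as in the proof of Lemma~\ref{lemma:base}: incrementing entries in specific columns, trading a cell between $\alpha$ and $\beta$, or replacing $\alpha$ by an outer-corner variant, and checking in each case that the adjusted pair still lies in the admissible sets. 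The main obstacle will be this exceptional analysis, because the partitions $\lambda$ that force $\alpha$ into one of the listed exceptions are exactly the near-hook and near-rectangle shapes, and for small $p$ these shapes can drag $\beta$ into its own exception set simultaneously. For the very small ranges of $p$ and $q$, where the exception lists form a substantial fraction of all partitions of $p$ or $q$, one either strengthens the lower bound using a larger intermediate subgroup in the manner of Proposition~\ref{proposition:Giannelli} to capture more Schur functions, or disposes of the finitely many residual cases by direct character computation.
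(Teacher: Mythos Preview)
Your proposal is correct and follows essentially the same route as the paper: reduce via induction by stages to $\ch(\Ind_{C_p}^{S_p}\zeta)\cdot f_q\geq s_\lambda$, identify the admissible $\alpha$ and $\beta$ via Theorems~\ref{theorem:Main_min} and~\ref{conjecture:Sundaram}, and then run an LR-tableau case analysis in the style of Lemma~\ref{lemma:base}, with the finitely many small $(p,q)$ handled by direct computation. The only cosmetic difference is in the execution of the case analysis: when $\alpha$ lands in $\{(p),(1^p)\}$ the paper first replaces $\alpha$ by the admissible near-hook $(p-1,1)$ or $(2,1^{p-2})$ (admissible precisely because $\zeta$ is non-real) and then repairs $\beta$, rather than adjusting $T_{\lambda\beta}$ to fix $\alpha$, and it does not invoke Proposition~\ref{proposition:Giannelli} here.
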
		
        \begin{proof}
            Clearly, 
            \begin{equation}\label{eq:non_trivial_trivial_starletov}
                \Ind_{C_{(p,q)}}^{ S_n} \widetilde{\zeta\times1}=\Ind_{C_p\times C_q}^{ S_n}\Ind_{C_{(p,q)}}^{C_p\times C_q} \widetilde{\zeta\times1}\geq \Ind^{ S_n}_{C_p\times C_q} \zeta\times1,
            \end{equation}
        where the last inequality $\geq$ means that the RHS is a subrepresentation of LHS.
        Using Theorem~\ref{theorem:Main_min} and induction in stages, we obtain
        \begin{equation}\label{eq:non_trivial_trivial_yang}
       \ch\Ind^{ S_n}_{C_p\times C_q} \zeta\times1 =\ch \Ind^{ S_n}_{ S_p\times  S_q} \Ind^{ S_p\times  S_q}_{C_p\times C_q} \zeta\times 1 \geq \sum\limits_{\substack{\alpha\vdash p,\beta\vdash q \\ \alpha\neq (p),(1^{p}) \\ f_{q}\geq s_\beta }}s_\alpha s_\beta\geq s_\lambda.
        \end{equation}

        Let $\lambda\vdash n$ not equal to $(n),(1^n).$
        For almost all partitions $\lambda$, we provide a pair of partitions $\alpha,\beta$ such that $\alpha\neq (p),(1^p)$, $f_q\geq s_\beta$ and $s_\alpha s_\beta \geq s_\lambda$.
        Hence the following equation holds for almost all $\lambda$:
        \begin{equation}\label{equation:n_t_t}
            \sum\limits_{\substack{\alpha\vdash p,\beta\vdash q \\ \alpha\neq (p),(1^{p}) \\ f_{q}\geq s_\beta }}s_\alpha s_\beta\geq s_\lambda.
        \end{equation}
        Using equations \eqref{eq:non_trivial_trivial_starletov},~\eqref{eq:non_trivial_trivial_yang},~\eqref{equation:n_t_t} we obtain that $\ch\Ind_{C_{(p,q)}}^{ S_n} \widetilde{\zeta\times1} \geq s_\lambda$.
        For the remaining partitions $\lambda$, we shall do direct computation, as we will see.

        Suppose that $q\geq 6$, one can easily use the proof of statements correspond to Equations~\eqref{eq:non-trivial_trivial_odd}, \eqref{eq:non-trivial_trivial_even} in Lemma~\ref{lemma:base} to get the results where the partitions which does not satisfy Equations~\eqref{eq:non-trivial_trivial_odd}, \eqref{eq:non-trivial_trivial_even} in Lemma~\ref{lemma:base} can be easily checked in the statement of the lemma separately.
        Suppose that $q\leq 5$ and $p\leq 14$.
        For these finitely many cases, one can directly verify that equation~\eqref{equation:n_t_t} holds for all $\lambda\vdash n$ except when $\lambda=(2,1),(3,3),(2,2,2)$ and possibly when  $\lambda\in B$, where $B=\{(n),(2,1^{n-2}),(1^n)\}$.
        In these cases, we may verify Equation \eqref{eq:Yang_Stareletov} directly. 

        Suppose that $q\leq 5$ and $p\geq 15$.
        Let us choose $\alpha\vdash p$ and $\beta\vdash q$ such that $f_q\geq s_\beta$ and $s_\alpha s_\beta\geq s_\lambda$ using Lemma~\ref{lemma:choose-beta}.
        If $\alpha\neq (p),(1^p)$, then we are done.
        \subsubsection*{Case 1: $\alpha=(p)$.}
        Since $\lambda\neq (n)$, we may replace $\alpha$ by $(p-1,1)$ and choose $\beta$ as in the proof of Lemma~\ref{lemma:alphabeta}.
        If $f_q\geq s_\beta$, then we are done.
        Otherwise one of the following cases must occur.
        \begin{itemize}
            \item Suppose that $\beta=(q-1,1)$.
            Then $\lambda/\alpha$ contains $q-1$ columns, of which precisely one column contains two cells. The column of $\lambda/\alpha$ which contains two cells can be either the first or the second since $p\geq 15$ and $q\leq 5$.
            If the first column contains two cells, then incrementing the entries in the first column of $\lambda/\alpha$ by $1$ allows us to replace $\beta$ by $(q-2,1,1)$ as shown below. Note that $q\geq 3$ in this case due to the existence of at least one cell in the first row of $\lambda/\alpha$.
                    \begin{displaymath}
						\ydiagram{7,1}*[*(yellow) 1]{7+1,1+2,0}*[*(yellow) 2]{0,0,0}*[*(yellow) 3]{0,0,0,0}*[*(lime) 1]{0,0,0+1,0}*[*(lime) 2]{0,0,0,0+1}\longrightarrow
						\ydiagram{7,1}*[*(yellow) 1]{7+1,1+2,0}*[*(yellow) 2]{0,0,0}*[*(yellow) 3]{0,0,0,0}*[*(lime) 1]{0,0,0}*[*(lime) 2]{0,0,0+1}*[*(lime) 3]{0,0,0,0+1}
					\end{displaymath}

            If the second column contains two cells, then incrementing the entry in the first column of $\lambda/\alpha$ allows us to replace $\beta$ by $(q-2,2)$ as shown below except when $q=3$.

            \begin{displaymath}
						\ydiagram{7,1}*[*(yellow) 1]{7+1,2+1,0+1}*[*(yellow) 2]{0,0,0}*[*(yellow) 3]{0,0,0,0}*[*(lime) 1]{0,1+1,0}*[*(lime) 2]{0,0,1+1}\longrightarrow
						\ydiagram{7,1}*[*(yellow) 1]{7+1,2+1,0}*[*(magenta) 2]{0,0,0+1}*[*(yellow) 3]{0,0,0,0}*[*(lime) 1]{0,1+1,0}*[*(lime) 2]{0,0,1+1}
			\end{displaymath}

            If $q=3$, then we may replace $\alpha$ by $(p-2,2)$ and $\beta$ by $(3)$.
            
            \item Suppose that $\beta=(2,1^{q-2})$ and $q$ odd.
            Then $\lambda=(p,1^q)$. We may replace $\beta$ by $(1^q)$ since $\lambda/\alpha$ is a vertical strip.
            \item Suppose that $\beta=(1^q)$ and $q$ even.
            Then $\lambda/\alpha$ must be contained in the $p$th column but $q\geq 2$ since $q$ is even. Hence this case cannot occur.
        \end{itemize}
        \subsubsection*{Case 2: $\alpha=(1^p)$.}
        Since $\lambda\neq (1^n)$, we may replace $\alpha$ by $(2,1^{p-2})$ and choose $\beta$ as in the proof of Lemma~\ref{lemma:alphabeta}.
        If $f_q\geq s_\beta$, then we are done.
        Otherwise one of the cases must occur.
        \begin{itemize}
            \item Suppose that $\beta=(q-1,1)$. Then the Young diagram of $\lambda/\alpha$
            contains $q-1$ columns, of which precisely one column contains two cells. The column of $\lambda/\alpha$ which contains two cells can be either the first, second or the third column.            

            If the first column contains two cells, then incrementing the entries in the first column by $1$ allows us to replace $\beta$ by $(q-2,1,1)$ as shown in the example below except when $q\leq 2$.
                  	\begin{displaymath}
						\ydiagram{2,1,1,1,1,1,1,1}*[*(yellow) 1]{2+2,1+1,0,0,0,0,0,0,0}*[*(yellow) 2]{0,0,0,0,0,0,0,0,0}*[*(yellow) 3]{0,0,0,0}*[*(lime) 1]{0,0,0,0,0,0,0,0,0+1,0}*[*(lime) 2]{0,0,0,0,0,0,0,0,0,0+1}\longrightarrow
						\ydiagram{2,1,1,1,1,1,1,1}*[*(yellow) 1]{2+2,1+1,0,0,0,0,0,0,0}*[*(yellow) 2]{0,0,0,0,0,0,0,0,0}*[*(yellow) 3]{0,0,0,0}*[*(lime) 1]{0,0,0,0,0,0,0,0,0,0}*[*(lime) 2]{0,0,0,0,0,0,0,0,0+1}*[*(lime) 3]{0,0,0,0,0,0,0,0,0,0+1}
					\end{displaymath}
            If $q\leq2$, then $\lambda=(2,1^{n-1})$ and in this case one can compute directly to show that Equation~\eqref{eq:Yang_Stareletov} holds.

            If the second column contains two cells or the third column contains two cells, then   incrementing the entry in the first column by $1$ (since the first column of $\lambda/\alpha$ contains precisely one cell), allows us to replace $\beta$ by $(q-2,1,1)$ as shown in the examples below.
            Note that in this case $q\geq 3$.
            \begin{displaymath}
						\ydiagram{2,1,1,1,1,1,1,1}*[*(yellow) 1]{2+2,0,0,0,0,0,0,0,0+1}*[*(yellow) 2]{0,0,0,0,0,0,0,0,0}*[*(yellow) 3]{0,0,0,0}*[*(lime) 1]{0,1+1,0}*[*(lime) 2]{0,0,1+1}\longrightarrow
						\ydiagram{2,1,1,1,1,1,1,1}*[*(yellow) 1]{2+2,0,0,0,0,0,0,0,0}*[*(yellow) 2]{0,0,0,0,0,0,0,0,0}*[*(yellow) 3]{0,0,0,0}*[*(lime) 1]{0,1+1,0}*[*(lime) 2]{0,0,1+1}*[*(magenta) 3]{0,0,0,0,0,0,0,0,0+1} 
                        \hspace{1.5cm}
                        \ydiagram{2,1,1,1,1,1,1,1}*[*(yellow) 1]{3+1,1+1,0,0,0,0,0,0,0+1}*[*(yellow) 2]{0,0,0,0,0,0,0,0,0}*[*(yellow) 3]{0,0,0,0}*[*(lime) 1]{2+1,0}*[*(lime) 2]{0,2+1}\longrightarrow
						\ydiagram{2,1,1,1,1,1,1,1}*[*(yellow) 1]{3+1,1+1,0,0,0,0,0,0,0}*[*(yellow) 2]{0,0,0,0,0,0,0,0,0}*[*(magenta) 3]{0,0,0,0,0,0,0,0,0+1}*[*(lime) 1]{2+1,0}*[*(lime) 2]{0,2+1}
			\end{displaymath}

            \item Suppose that $\beta=(2,1^{q-2})$ and $q$ odd.
            Then $\lambda=(3,1^{n-2}),(2,2,1^{n-4}),(2^q,1^{p-q}).$ Since $\lambda/\alpha$ is vertical strip in all these cases (using $p\geq 15$ and $q\leq 5$ in the last case),
             we may replace $\beta$ by $(1^q)$.
            This completes the proof for this case.
    
            \item Suppose that $\beta=(1^q)$ and $q$ even.
            Then $\lambda$ must be equal to $(2,1^{n-2})$. In this case one can simply check directly, for example using the inner product of characters.
        \end{itemize}
        
    For the remaining, finitely many pairs $(p,q)$, one can directly compute using sage for example. 
    \end{proof}

We have a nice corollary which follows from the proof of Lemma~\ref{lemma:Yang_stareletov}.
        \begin{corollary}\label{corollary:Yang_stareletov}
            Let $\mu=(p,q)$ be a partition of $n$ with $p\geq 3$.
        Then
        \begin{equation}\label{equation_corollary:n_t_t}
            \sum\limits_{\substack{\alpha\vdash p,\beta\vdash q \\ \alpha\neq (p),(1^{p}) \\ f_{q}\geq s_\beta }}s_\alpha s_\beta\geq s_\lambda.
        \end{equation}
        holds for all partitions $\lambda\vdash n$ except when $\lambda=(3,3),(2,2,2)$ with $p=q=3$ and possibly when  $\lambda\in \{(n),(2,1^{n-2}),(1^n)\}$.
        \end{corollary}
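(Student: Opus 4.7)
The plan is to extract the inequality directly from the proof of Lemma~\ref{lemma:Yang_stareletov}. The displayed inequality \eqref{equation_corollary:n_t_t} is identical to the intermediate inequality \eqref{equation:n_t_t} used there, and the combinatorial portion of that proof already exhibited, for each $\lambda$ outside a small explicit list, a pair $(\alpha,\beta)\vdash(p,q)$ with $\alpha\notin\{(p),(1^p)\}$, $f_q\geq s_\beta$, and $s_\alpha s_\beta\geq s_\lambda$. The corollary is then a matter of collecting the exceptional list.

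I would mirror the three-regime split of Lemma~\ref{lemma:Yang_stareletov}. For $q\geq 6$, the constructions of admissible pairs used in the proofs of Equations~\eqref{eq:non-trivial_trivial_odd} and~\eqref{eq:non-trivial_trivial_even} of Lemma~\ref{lemma:base} adapt to produce an admissible pair for every $\lambda\notin\{(n),(2,1^{n-2}),(1^n)\}$; the shapes $(3,3)$ and $(2,2,2)$ cannot arise here since $n=p+q>6$. For $q\leq 5$ with $p\geq 15$, the $\alpha=(p)$ and $\alpha=(1^p)$ subcases analyzed in Lemma~\ref{lemma:Yang_stareletov}, via Lemmas~\ref{lemma:alphabeta} and~\ref{lemma:choose-beta}, supply an admissible pair except when $\lambda\in\{(n),(2,1^{n-2}),(1^n)\}$. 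The remaining finitely many pairs $(p,q)$ with $q\leq 5$ and $p\leq 14$ are handled by direct computation (e.g.\ in Sage); this is the only regime in which the additional exceptions $\lambda=(3,3)$ and $\lambda=(2,2,2)$ appear, both occurring precisely at $p=q=3$.

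Assembly is then bookkeeping: no exception outside $\{(n),(2,1^{n-2}),(1^n),(3,3),(2,2,2)\}$ surfaces in any of the three regimes, and the last two arise only for $p=q=3$. The main point to check carefully is that in the first regime the construction from Lemma~\ref{lemma:base} can always be modified to produce $\alpha\notin\{(p),(1^p)\}$ (rather than just $\alpha\notin A_p$); but this is precisely the content of the $\alpha=(p)$ and $\alpha=(1^p)$ subcases already treated in the proof of Lemma~\ref{lemma:Yang_stareletov}, so no genuinely new combinatorial input is required.
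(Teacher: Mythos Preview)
Your proposal is correct and matches the paper's approach exactly: the paper's entire proof is the single remark that the corollary ``follows from the proof of Lemma~\ref{lemma:Yang_stareletov},'' and what you have written is simply an explicit unpacking of that claim along the same three-regime split ($q\geq 6$; $q\leq 5$ with $p\geq 15$; $q\leq 5$ with $p\leq 14$). One small slip: in your final paragraph you write ``rather than just $\alpha\notin A_p$,'' but in Equations~\eqref{eq:non-trivial_trivial_odd} and~\eqref{eq:non-trivial_trivial_even} the constraint on $\alpha$ is already $\alpha\notin B_p=\{(p),(1^p)\}$, so there is nothing extra to check on the $\alpha$ side---the adaptation needed when passing from $p=q$ to general $p\geq q$ is on the $\beta$ side (replacing $\beta\notin A_p$ by $f_q\geq s_\beta$, i.e.\ $\beta\notin A_q$).
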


    \begin{lemma}\label{lemma:eigen_value_-1}
        Let $(p,q)$ be a partition of $n$ with $ w_{(p,q)} $ having even order and $p\geq 6, q\geq 1$.
        Then $\ch\Ind_{C_{(p,q)}}^{ S_n} -1\geq s_\lambda$ for all partitions $\lambda$ of $n$ except when $\lambda$ equal to $(n)$ and possibly $(1^n)$. 
    \end{lemma}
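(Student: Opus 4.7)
The plan is to adapt the strategy of Lemma~\ref{lemma:Yang_stareletov}. Since $w_{(p,q)}$ has even order, at least one of $p,q$ is even. Suppose first that $p$ is even; then the sign character $\zeta_p$ of $C_p$ satisfies $\zeta_p(w_p)=-1$, so the character $\zeta_p\times\mathbf{1}_{C_q}$ of $C_p\times C_q$ restricts to $-1$ on $C_{(p,q)}$. Induction in stages therefore gives
\begin{equation}
\Ind_{C_{(p,q)}}^{S_n}(-1)\ \geq\ \Ind_{C_p\times C_q}^{S_n}(\zeta_p\times\mathbf{1}_{C_q})\ =\ \Ind_{S_p\times S_q}^{S_n}\bigl[(\Ind_{C_p}^{S_p}\zeta_p)\boxtimes(\Ind_{C_q}^{S_q}\mathbf{1})\bigr].
\end{equation}

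By Theorem~\ref{theorem:Main_min}, for $p\geq 6$ with $p$ even, the only partitions $\alpha\vdash p$ whose minimal polynomial for $\rho_\alpha(w_p)$ omits the factor $x+1$ are $\alpha=(p)$ and $\alpha=(2,1^{p-2})$ (the exceptional $(3,3), (2,2,2)$ at $p=6$ both retain $-1$ as an eigenvalue). Hence $\zeta_p$ occurs in $\Res_{C_p}^{S_p}\chi_\alpha$ for every other $\alpha\vdash p$, and together with $\ch\Ind_{C_q}^{S_q}\mathbf{1}=f_q$ this yields the Schur-positive bound
\begin{equation}\label{eq:prop_bound}
\ch\Ind_{C_{(p,q)}}^{S_n}(-1)\ \geq\ \sum_{\substack{\alpha\vdash p,\ \beta\vdash q\\ \alpha\neq(p),(2,1^{p-2})\\ f_q\,\geq\,s_\beta}} s_\alpha\,s_\beta.
\end{equation}

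It now suffices to show that the right-hand side of~\eqref{eq:prop_bound} dominates $s_\lambda$ for every $\lambda\vdash n$ outside $\{(n),(1^n)\}$. This is a Littlewood-Richardson statement very close to Corollary~\ref{corollary:Yang_stareletov}; the only change is that the forbidden set for $\alpha$ is $\{(p),(2,1^{p-2})\}$ instead of $\{(p),(1^p)\}$. The case analysis of Lemma~\ref{lemma:base} can be ported almost verbatim: given $\lambda$, split on whether $\lambda$ contains $(p-1,1)$ or $(2,1^{p-2})$ or neither, select $\alpha$ as one of the safe starting partitions such as $(p-2,2)$, $(2,2,1^{p-4})$, or a maximal-in-dominance partition $\alpha\subset\lambda$, and build $\beta$ via the column-filling of Lemma~\ref{lemma:alphabeta}. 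When the resulting $\beta$ lands in the forbidden set $\{(q-1,1),(2,1^{q-2})\text{ if $q$ odd},(1^q)\text{ if $q$ even}\}$ one repairs it by the entry-increment or local-reshape moves demonstrated in the proof of Lemma~\ref{lemma:base}. The case $p$ odd (forcing $q$ even) is handled symmetrically using $\mathbf{1}_{C_p}\times\zeta_q$.

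The principal obstacle is not conceptual but extensive bookkeeping: dispatching the hook and two-row $\lambda$'s, the near-rectangles, and the small values of $q$ (especially $q\in\{1,2,3,4,5\}$, where Pieri suffices but a finite family of shapes must be checked explicitly or by computer algebra), while also carefully identifying when $(1^n)$ is a genuine exception. The latter happens precisely when $\mathrm{sgn}(w_{(p,q)})=(-1)^{p+q}=+1$, i.e.\ when $p+q$ is even, because then $\rho_{(1^n)}(w_{(p,q)})=+1$ and no vector can be sent to its negative.
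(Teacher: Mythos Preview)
Your overall strategy---factor $-1$ through $C_p\times C_q$, apply Theorem~\ref{theorem:Main_min} to identify the forbidden $\alpha$'s, and then run a Littlewood--Richardson case analysis in the spirit of Lemma~\ref{lemma:base}---is exactly what the paper does for the case where both $p$ and $q$ are even. Your bound~\eqref{eq:prop_bound} and the paper's displayed inequality
\[
g_{(p,q)}\ \geq\ \sum_{\alpha\neq (p),(2,1^{p-2})} s_\alpha \sum_{\beta\neq (q-1,1),(1^q)} s_\beta
\]
are the same, and the subsequent case split ($\alpha=(p)$ vs.\ $\alpha=(2,1^{p-2})$, then repair $\beta$) is what the paper actually carries out.

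The one substantive divergence is your handling of the parity-mixed cases. The paper does \emph{not} run the LR argument when exactly one of $p,q$ is odd; instead it observes that then $w_{(p,q)}$ is an odd permutation, so $-1=\mathrm{sgn}(w_{(p,q)})$, and the whole statement is an immediate consequence of Corollary~\ref{corollary:eigen_value_sgn} (i.e.\ of Theorem~\ref{theorem:inv_vec_sym} tensored with sign). Only after this reduction does it assume both $p,q$ even and do the bookkeeping. Your proposal instead treats ``$p$ even, $q$ odd'' inside the same LR framework (now with the odd-$q$ forbidden set $\{(q-1,1),(2,1^{q-2})\}$ for $\beta$) and claims the case ``$p$ odd, $q$ even'' is ``handled symmetrically using $\mathbf 1_{C_p}\times\zeta_q$''. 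That last claim is where you are being too optimistic: the roles are not symmetric, since now the restricted factor has size $q\leq p$, and for small even $q$ (notably $q=2$, where only $\beta=(1,1)$ survives, or $q=4$) you are left to show that $f_p\cdot s_{(1,1)}$, respectively $f_p\cdot(s_{(3,1)}+s_{(2,2)}+s_{(1^4)})$, dominates every $s_\lambda$ with $\lambda\neq(n),(1^n)$. This is doable, but it is a genuinely separate Pieri/LR verification, not a reuse of the $p$-even analysis; the paper sidesteps it entirely via Corollary~\ref{corollary:eigen_value_sgn}.

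So: your route is correct in principle and self-contained (it avoids invoking Theorem~\ref{theorem:inv_vec_sym}), but the ``symmetric'' step hides additional case work that the paper never has to do. If you want to keep your approach, you should either (i) replace the symmetry remark with an explicit small-$q$ analysis, or (ii) adopt the paper's shortcut and dispose of the odd-permutation case via Corollary~\ref{corollary:eigen_value_sgn} before doing any LR computations. Your final remark identifying $(1^n)$ as a genuine exception exactly when $p+q$ is even is correct.
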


    \begin{proof}
        Suppose that $w_{(p,q)}$ is an odd permutation. Then the theorem follows from
        Corollary~\ref{corollary:eigen_value_sgn}.
        So we may assume that $w_{(p,q)}$ is an even permutation, and thus both $p,q$ and $p+q=n$ are even.
        
        Since $g_{(p,q)}=\ch \Ind_{C_{(p,q)}}^{ S_n} -1\geq\ch\Ind_{C_{(p)}\times C_q}^{ S_n} -1\times1$ and $\ch\Ind_{C_{(n)}}^{ S_n} -1\geq s_\lambda$ for all partitions of $\lambda$ except when $\lambda=(n),(2,1^{n-2})$ by Theorem~\ref{theorem:Main_min}, we have  
        \begin{displaymath}
            g_{(p,q)}\geq \sum\limits_{\alpha\neq (p),(2,1^{p-2})}s_\alpha \sum\limits_{\alpha\neq (q-1,1),(1^{q})}s_\beta.
        \end{displaymath}

        Let $\lambda$ be a partition of $n$ and $\lambda\notin\{(n),(n-1,1),(2,1^{n-2}),(1^n)\}$.
        Using Lemmas~\ref{lemma:alphabeta},~\ref{lemma:choose-beta}, we get a pair of partitions $\alpha$ and $\beta$ such that $s_\alpha s_\beta\geq s_\lambda$ and $f_q\geq s_\beta.$
        If $g_p\geq s_\alpha$, then there is nothing to prove.
        Otherwise one of the following cases must occur.
        \subsubsection*{Case 1: $\alpha=(p)$.}
        Since $\lambda\neq (n)$ we may replace $\alpha$ by $(p-1,1)$ and choose $\beta$ as in the proof of \ref{lemma:alphabeta}. If $f_q\geq s_\beta$, we are done.
        Otherwise one of the following cases occurs.
        \begin{itemize}
            \item Suppose that $\beta=(q-1,1)$.
            
            Then $\lambda/\alpha$ contains $q-1$ columns, of which precisely one column contains two cells. The column of $\lambda/\alpha$ which contains two cells can be one of the first, the second or the $p$th column.
            
            If the first column of $\lambda/\alpha$ contains two cells, then incrementing the entries in the first column by $1$, allows us to replace $\beta$ by $(q-2,1,1)$ as shown in the example below. Note that $q\geq 4$.

                  	\begin{displaymath}
						\ydiagram{7,1}*[*(yellow) 1]{7+1,1+2,0}*[*(yellow) 2]{0,0,0}*[*(yellow) 3]{0,0,0,0}*[*(lime) 1]{0,0,0+1,0}*[*(lime) 2]{0,0,0,0+1}\longrightarrow
						\ydiagram{7,1}*[*(yellow) 1]{7+1,1+2,0}*[*(yellow) 2]{0,0,0}*[*(yellow) 3]{0,0,0,0}*[*(lime) 1]{0,0,0}*[*(lime) 2]{0,0,0+1}*[*(lime) 3]{0,0,0,0+1}
					\end{displaymath}

            If the second column of $\lambda/\alpha$ contains two cells, then incrementing the entry in the first column by $1$, allows us to replace $\beta$ by $(q-2,2)$ as shown in the example below.
                  	\begin{displaymath}
						\ydiagram{7,1}*[*(yellow) 1]{7+1,2+1,0+1}*[*(yellow) 2]{0,0,0}*[*(yellow) 3]{0,0,0,0}*[*(lime) 1]{0,1+1,0}*[*(lime) 2]{0,0,1+1}\longrightarrow
						\ydiagram{7,1}*[*(yellow) 1]{7+1,2+1,0}*[*(magenta) 2]{0,0,0+1}*[*(yellow) 3]{0,0,0,0}*[*(lime) 1]{0,1+1,0}*[*(lime) 2]{0,0,1+1}
					\end{displaymath}
                        
            If the $p$th column of $\lambda/\alpha$ contains two cells, then we may replace $\alpha$ by $(p-2,2)$ and $\beta$ by $(p-2,2)$ as shown in the example below.
                  	\begin{displaymath}
						\ydiagram{7,1}*[*(yellow) 1]{0,1+6,0}*[*(yellow) 2]{0,0,0}*[*(yellow) 3]{0,0,0,0}*[*(lime) 1]{7+1,0}*[*(lime) 2]{0,7+1}\longrightarrow
						\ydiagram{6,2}*[*(yellow) 1]{0,2+4,0}*[*(yellow) 2]{0,0,0}*[*(yellow) 3]{0,0,0,0}*[*(lime) 1]{7+1,0}*[*(lime) 2]{0,7+1}*[*(magenta) 1]{6+1}*[*(magenta) 2]{0,6+1}
					\end{displaymath}
            Note that in this case $p$ must be equal to $q$.
                
            \item Suppose that $\beta=(1^q)$ and $q$ even.
            Then $\lambda/\alpha$ has exactly one column and there exist a cell in the $p$th column of $\lambda/\alpha$. This implies there is no possible possible Young diagram except when $q=1$. Since $q$ is even, this case is also excluded.
        \end{itemize}
        \subsubsection*{Case 2: $\alpha=(2,1^{p-2})$.}

        Now we shall consider the following cases.
		\begin{itemize}
			\item Suppose $\lambda_2\geq 2$.
			We shall replace $\alpha$ by $(2,2,1^{p-4})$.
			Choose $\beta$ as in the proof of Lemma~\ref{lemma:alphabeta}.
			If $f_q\geq s_\beta$, then we are done.
			Otherwise one of the following cases must occur.
			\begin{itemize}
				\item $\beta=(q-1,1)$.
				In this case, $\lambda/\alpha$ has $q-1$ columns, with one column having two cells.
				The column with two cells has to be one of the first three columns.
				
				If the first or second column of $\lambda/\alpha$ has two cells, increment the entries of those two cells by $1$ to replace $\beta$ by $(q-2,1,1)$.
				Since $p\geq 4$, $\lambda/\alpha$ has at least one cell in the first row, so the resulting skew-tableau is an LR-tableau.

                    \begin{displaymath}
						\ydiagram{2,2,1,1}*[*(yellow) 1]{2+2,0,1+1,0,0,0,0,0,0}*[*(yellow) 2]{0,0,0,0,0,0,0,0,0}*[*(yellow) 3]{0,0,0,0}*[*(lime) 1]{0,0,0,0,0+1,0,0,0,0}*[*(lime) 2]{0,0,0,0,0,0+1}\longrightarrow
						\ydiagram{2,2,1,1}*[*(yellow) 1]{2+2,0,1+1,0,0,0,0,0,0}*[*(yellow) 2]{0,0,0,0,0,0,0,0,0}*[*(yellow) 3]{0,0,0,0}*[*(lime) 2]{0,0,0,0,0+1,0,0,0,0}*[*(lime) 3]{0,0,0,0,0,0+1} 
      \hspace{1.5cm}
                        \ydiagram{2,2,1,1}*[*(yellow) 1]{2+2,0,0,0,0+1,0,0,0}*[*(yellow) 2]{0,0,0,0,0,0,0,0,0}*[*(yellow) 3]{0,0,0,0}*[*(lime) 1]{0,0,1+1,0,0,0,0}*[*(lime) 2]{0,0,0,1+1}\longrightarrow
						\ydiagram{2,2,1,1}*[*(yellow) 1]{2+2,0,0,0,0+1,0,0,0,0}*[*(yellow) 2]{0,0,0,0,0,0,0,0,0}*[*(yellow) 3]{0,0,0,0}*[*(lime) 2]{0,0,1+1,0,0,0,0}*[*(lime) 3]{0,0,0,1+1}
					\end{displaymath}

				Suppose the third column of $\lambda/\alpha$ has two cells.
				Since $\lambda\supset (2,1^{p-2})$, the first column of $\lambda/\alpha$ must have exactly one cell.
				Changing the entry of this cell from $1$ to $3$ allows us to replace $\beta $ by  $(q-2,1,1)$.

                \begin{displaymath}    
                \ydiagram{2,2,1,1}*[*(yellow) 1]{3+1,0,1+1,0,0+1,0}*[*(yellow) 2]{0,0,0,0,0,0,0,0,0}*[*(yellow) 3]{0,0,0,0}*[*(lime) 1]{2+1,0,0,0,0,0}*[*(lime) 2]{0,2+1}\longrightarrow
				\ydiagram{2,2,1,1}*[*(yellow) 1]{3+1,0,1+1,0,0}*[*(yellow) 2]{0,0,0,0,0,0,0,0,0}*[*(yellow) 3]{0,0,0,0}*[*(lime) 1]{2+1,0,0,0,0,0}*[*(lime) 2]{0,2+1}*[*(magenta) 3]{0,0,0,0,0+1}
				\end{displaymath}

				\item Suppose that $\beta=(1^q)$.
				In this case, $\lambda$ must be equal to $(2,2,1^{n-4})$. Then we may choose $\alpha=(1^p)$ and $\beta=(2,1^{q-2})$ except when $q=2$. In this case, $\lambda=(2,2,1^{p-4})$. 
                One can verify that when $\lambda=(2,2,1^{p-4})$ the theorem is true.
			\end{itemize}
            \item Suppose $\lambda_2=1$ (so that $\lambda$ is a hook) and $\lambda_1\geq 3$.
			Replace $\alpha$ by $(3,1^{p-3})$ and choose $\beta$ according to the proof of Lemma~\ref{lemma:alphabeta}.
			If $f_q\geq s_\beta$, then we are done.
			Otherwise one of the following cases must occur.
			\begin{itemize}
				\item Suppose that $\beta = (q-1,1)$. In this case, $\lambda/\alpha$ has $q-1$ columns, one having two cells and the other having only one cell.
				Since $\lambda$ is a hook, only the first column of $\lambda/\alpha$ can have two cells.
				Incrementing the entries in the first column by one will allow us to replace $\beta$ to $(q-2,1,1)$.
				\item Suppose that $\beta = (1^q)$.
				In this case, $\lambda$ must be $(3,1^{n-3}).$
                We may replace $\alpha$ by $(1^p)$ and, $\beta$ by $(2,1^{q-2}).$
			\end{itemize}
   \end{itemize}
This completes the proof.  
\end{proof}

\begin{proof}[Proof of Theorem~\ref{Theorem:main}]

        Let $\mu\vdash n\geq 11$ such that all its parts divides $\mu_1$.
        We will determine, for which pair $(\lambda,\mu)$, the degree of the minimal polynomial of $\rho_\lambda(w_\mu)$ is not equal to $\mu_1$.
        It is suffices to check whether all $\mu_1$th roots of unity are eigenvalues of $\rho_\lambda(w_\mu)$.
        
        We know that $\rho_\lambda(w_\mu)$ has eigenvalue $1$ except for the pairs $(\lambda,\mu)$ given in Theorem~\ref{theorem:inv_vec_sym}.

        If the length of $\mu$ is $1$, then the theorem follows from Theorem~\ref{theorem:inv_vec_sym}.
        So let us assume that the length of $\mu$ is greater than or equal to $2$ and we may also assume that $\mu\neq (1^n)$.
        
        Let us consider a non-real $\mu_1$th root of unity (say $t$) and hence $\mu_1\geq 3$.
        Using Lemma~\ref{lemma:Yang_stareletov}, we have that $t$ is an eigenvalue of $\rho_\lambda (w_{(\mu_1,\mu_2)})$ except when $\lambda=(n),(1^n)$.
        
        Now we note that $\mu_1+\mu_2\geq 6$ or $\mu_i\leq 1$ for all $i\geq 2$ since $\mu_2|\mu_1$.
        
        In the former case, we can use Corollary~\ref{corollary:Yang_stareletov} with $p$ replaced by $\mu_1+\mu_2$ and $q$ replaced by $\mu_3$, and verify for the partitions listed as exceptions in Corollary~\ref{corollary:Yang_stareletov} directly. We conclude
        that $t$ is an eigenvalue of $\rho_\lambda (w_{(\mu_1,\mu_2,\mu_3)})$. Continuing this process with $p$ replaced by $\sum_{i=1}^k\mu_{i}$ and $q$ replaced by $\mu_{k+1}$ in Corollary~\ref{corollary:Yang_stareletov} and using induction we complete the proof in this case.
        
        For the latter case, we have $\mu_i\leq 1$ for all $i\geq 2$.
        Since $\mu_1|\mu_2$, we have $3\leq \mu_1\leq 5$.
        Hence $\mu=(\mu_1,1,1,1,\dotsc,1)$, and we see that $\rho_\lambda(\mu_1,1,1,1)$ has minimal polynomial $x^{\mu_1+3}-1$ for all partitions $\lambda$ of $\mu_1+3$ except when $\lambda=(\mu_1+3) \text{ or }(1^{\mu_1+3})$.
        Now we could replace $p$ by $\mu_1+3$ and $q$ by $1$ in Corollary~\ref{corollary:Yang_stareletov} and then use induction to complete the proof as in the previous case.
        
        %If $\mu=(3,3,1,1,\dotsc,1)$, then by induction we know that $t$ is an eigenvalue of  
        %$\rho_\lambda(w_{\tilde\mu})$ with the two exceptions $(n-3),~(1^{n-3})$ for all partitions $\lambda\vdash n-3$, where $\tilde\mu$ is obtained from $\mu$ by removing its part $\mu_2=3$, namely $\tilde\mu=(\mu_1,\mu_3,\mu_4,\dotsc)$.
        %Now we use Corollary~\ref{corollary:Yang_stareletov} with $p$ replaced by $n-3$ and $q$ replaced by $3$. It follows that $t$ is indeed an eigenvalue of $\rho_\lambda(w_\mu)$ for all partitions $\lambda\vdash n$ except when $\mu=(n),(1^n)$.

        Finally, let us consider the eigenvalue $-1$ in the case of $\mu_1$ even. We assumed that $\mu\neq (1^n)$ and the length of $\mu$ is greater than or equal to two.
        If $w_\mu$ is an odd permutation, then using Corollary~\ref{corollary:eigen_value_sgn}, we have that $-1$ is an eigen value of $\rho_\lambda(w_\mu)$. %except for the pairs listed in Corollary~\ref{corollary:eigen_value_sgn}.
        Thus we may assume that $w_\mu$ is an even permutation, hence $w_{\tilde\mu}$ is an odd permutation where $\tilde\mu=(\mu_2,\mu_3,\dotsc)$.
        
        Suppose that $\mu_1=2$, hence $w_\mu$ is an involution.
        Then we can easily see that $-1$ must be an eigenvalue for all faithful representations of $ S_n$.
        The only non-faithful representations are precisely when $\lambda\in \{(n),(1^n), (2,2)\}$.
        Indeed, if $\rho_\lambda$ is not faithful, then the kernel $K$ forms a normal subgroup which is not equal to the trivial subgroup $\{e\}$. If $K=S_n$, then $\rho_\lambda$ is the trivial representation. Hence, in this case, $\lambda=(n)$.
        Otherwise, $K$ is a non-trivial normal subgroup of $S_n$.
        But the only non-trivial normal subgroups of $S_n$ are $A_n$ and, if $n=4$ we have one more, namely, the Klein-four subgroup $K_4$ which consists of all permutations of cycle type equal to $(2,2)$ and identity $\{e\}$.
        Therefore, if $K=A_n$, then we have only two possible irreducible representations for $S_n/A_n$.
        In this case, $\lambda$ must be equal either $(n)$ or $(1^n)$.
        If $n=4$ and $K=K_4$, then there are only three possible irreducible representations for $S_n/K_4$.
        In this case, $\lambda$ must be equal to the one of the elements in the set $\{(4),(2,2),(1^4)\}$.
        One can directly verify the theorem when $\lambda\in \{(n),(1^n),(2,2)\}$.
        %Clearly, $-1$ is not an eigenvalue for $\rho_\lambda(w_\mu)$ when $\lambda=(n), (1^n)$. When $n=4$, we have one more exception namely $\rho_{(2,2)}w_{(2,2)}$ does not have eigenvalue $-1$. Note that $V_{(2,2)}$ is not a faithful representation of $ S_4$.
        
        Therefore, we may assume that $\mu_1\geq 4$.
        Suppose that the length of $\mu=2$. Then we may use Lemma~\ref{lemma:eigen_value_-1} and checking the remaining finitely many cases directly will yield the theorem.
        Now we may assume that the length of $\mu$ is at least three.
        Using Corollary~\ref{corollary:Yang_stareletov} with $p$ replaced by $\mu_1+\mu_2$ and $q$ replaced by $\mu_3$, we can conclude that the theorem holds when the length of the partition $\mu$ is three.

        Continuing this process with $p$ replaced by $\sum_{i=1}^k\mu_{i}$ and $q$ replaced by $\mu_{k+1}$ in Corollary~\ref{corollary:Yang_stareletov} and using induction, we complete the proof in this case.
        
        This completes the proof.
        %Suppose that $\mu$ has length $3$.
        %Then Now we use Lemma~\ref{lemma:Yang_stareletov} by replacing $p$ by $\mu_1+\mu_2$ and $q$ by $\mu_3$ and for the exceptional cases in the Lemma~\ref{lemma:Yang_stareletov}, one can compute directly.
        %We can iterate this process to complete the theorem.
\end{proof}
Finally, we prove some applications of the Lemmas we have proved.
\begin{proof}[Proof of Theorem~\ref{theorem:eigen_value_negative_universal}]
    Let $n\geq 11$. For $n\leq 11$ we verify the theorem by direct computation. 
    Suppose that $w_\mu$ is an odd permutation. Then the theorem follows from Corollary~\ref{corollary:eigen_value_sgn}.
    Otherwise, at least two parts of $\mu$ is even, say $\mu_i,\mu_j$ for some $i>j$.
    
    Suppose that $\mu_1\geq 4$. Then we may consider the permutation $w_\nu$ where $\nu={(\mu_1,\mu_i)}$ (or equal to  $(\mu_1,\mu_j)$ if $i=1$). We shall work with $\nu=(\mu_1,\mu_i)$, since for the other case the proof is similar.
    Using Lemma~\ref{lemma:eigen_value_-1} and direct computation for the finitely many remaining cases, we get $g_\nu\geq s_\lambda$ for all partitions of $\lambda\vdash \mu_1+\mu_i$ except possibly when $\lambda=(\mu_1+\mu_i) \text{ or } (1^{\mu_1+\mu_j})$. Now we may use Corollary~\ref{corollary:Yang_stareletov} with $p$ replaced by $\sum\limits_{l=1}^k\mu_l$ and $q$ by $\mu_{k+1}$. The theorem follows by induction.

    Suppose that $\mu_1\leq 3$ and hence $\mu=(3^k,2^l,1^t)$ with $l\geq 2$.
    If $k=0$, then the theorem follows from Theorem~\ref{Theorem:main}.
    Assume that $k\geq 1$.
    Since $g_{(3,2,2)}\geq s_\lambda$ for all partitions $\lambda\vdash 7$ except when $\lambda=(7),(1^7),$
    we can use Corollary~\ref{corollary:Yang_stareletov} with $p$ replaced by $7$ and $q$ replaced by any part of $\mu$ other than $\mu_1=3,\mu_{k+1}=2,\mu_{k+2}=2$. Now the theorem follows by iterating this process using Corollary~\ref{corollary:Yang_stareletov}.
    This completes the proof.
\end{proof}

\section{Proof of theorems on alternating groups}\label{section:Proofs of theorems regarding Alternating groups}
\begin{proof}[Proof of Theorem~\ref{theorem:eigen_value_negative_universal_Alt}]
    Let $V$ be a non-trivial irreducible representation of $A_n$.
    If $V=V_\lambda$ with $\lambda\neq\lambda'$, then $\rho_\lambda(w_\mu)$ has eigenvalue $-1$ from Theorem~\ref{theorem:eigen_value_negative_universal}.
    Otherwise, $V=V_\lambda^\pm$ with $\lambda=\lambda'$.
    In this case, we have $\chi_\lambda^\pm(w_\mu^i)=\tfrac{\chi_\lambda(w_\mu^i)}{2}$.
    Computing the inner product of the characters $\Res_{C_\mu}^{A_n}\chi_\lambda^\pm,\delta$ where $\delta\in Irr(C_\mu)$  yields that $w_\mu$ has eigenvalue $-1$ in $V_\lambda^+$ and $V_\lambda^-$ if and only if it has eigenvalue $-1$ in $V_\lambda$. Now the theorem follows from Theorem~\ref{theorem:eigen_value_negative_universal}.
\end{proof}

\begin{proof}[Proof of Theorem~\ref{theorem:Main_2_min_alternating}]
One can prove this theorem independently by the arguments similar to those of the proof of Theorem~\ref{theorem:Main_min} about symmetric groups.
But for the sake of completeness we shall give a short proof which uses Theorem~\ref{theorem:Main_min} of symmetric groups.

We verify the theorem for $n\leq 25$ by direct computation or using sage.
Now we assume that $n\geq 27$ is an odd positive integer.
Let $(\rho,V)$ be an irreducible representation of the alternating group $A_n$.
If $V$ is not equal to both $V_{(\tfrac{n+1}{2},\tfrac{n-1}{2})}^+$ and $V_{(\tfrac{n+1}{2},\tfrac{n-1}{2})}^-$, then the theorem follows from Theorem~\ref{theorem:Main_min}.
Because the character value is $\rho(w_\mu^i)=\chi_\lambda(w_\mu^i)$ for all $i$ or $\rho(w_\mu^i)=\tfrac{\chi_\lambda(w_\mu^i)}{2}$ for all $i$.
In either case, the inner product of the characters yields that $t$ is an eigenvalue for $w_n^\pm$ in $V$ if and only if $t$ is an eigenvalue for $w_n$ in $V_\lambda$. The theorem follows from Theorem~\ref{theorem:Main_min} in this case.

Finally, let $V=V_{(\tfrac{n+1}{2},\tfrac{n-1}{2})}^\pm$.
Suppose that $p$ is prime. Then one can easily verify that the minimal polynomial of $\rho(w_n)$ is $x^n-1$.
Otherwise, let us write $n=mp$ where $p$ is the smallest prime dividing $n$, and hence $m\geq 5$.
Note that $C_m\wr C_p\subset A_n$.
We may choose $\mu^1=(m-2,1,1)$ and $\mu^2=\mu^3=\dotsc=\mu^p=(m)$. Then we see that 
$\Ind_{A_m^{\times p}}^{A_n} \chi_{\mu^1}\times\dotsc \times\chi_{\mu^p}\geq \chi_{\lambda}^\pm$.
Now the theorem follows using induction and Proposition~\ref{proposition:Giannelli}.
\end{proof}

\begin{proof}[Proof of Theorem~\ref{Theorem:Main_Alt}]
Let $\mu$ be a partition of $n\geq 26$ with all its parts divides $\mu^1$ such that $w_\mu$ is an even permutation.
If the length of $\mu$ is one, then the theorem follows from Theorem~\ref{theorem:Main_2_min_alternating}.
If $\mu\notin DOP_n$, then the theorem follows from Theorem~\ref{Theorem:main}.
Otherwise, $\mu\in DOP_n$. Suppose that $\lambda\neq \phi(\mu)$ where $\phi(\mu)$ is the folding of the partition $\mu$, then in this case the result follows from Theorem~\ref{Theorem:main}. 
Hence let $\lambda=\phi(\mu)$.
We have
\begin{align*}
\Ind_{C_\mu}^{A_n} \delta\geq \Ind_{C_{\mu_1}\times C_{\mu_2}\times \dotsc \times C_{\mu_k}}^{A_n}\tilde\delta\geq \Ind_{A_{\mu_1}\times A_{\mu_2}\times \dotsc \times A_{\mu_k}}^{A_n}\Ind_{C_{\mu_1}\times C_{\mu_2}\times \dotsc \times C_{\mu_k}}^{A_{\mu_1}\times A_{\mu_2}\times \dotsc \times A_{\mu_k}}\tilde\delta, 
\end{align*}
where $\delta$ is any linear character of $C_\mu$ and $\tilde\delta=\bar\delta\times1\times\dotsc\times1$ with $\bar\delta(w_{\mu_1}^i)=\delta(w_\mu^i)$ for all $i$.
Let $\nu^i=\phi(\mu_i)$ for $i=1,2,\dotsc,k$ where $k$ is the number of parts of $\mu$.
If $3$ or $5$ is not a part of $\mu$, then we are done, since 

\begin{align*}
\Ind_{C_\mu}^{A_n} \delta
&\geq \Ind_{A_{\mu_1}\times A_{\mu_2}\times \dotsc \times A_{\mu_k}}^{A_n}\Ind_{C_{\mu_1}\times C_{\mu_2}\times \dotsc \times C_{\mu_k}}^{A_{\mu_1}\times A_{\mu_2}\times \dotsc \times A_{\mu_k}}\tilde\delta\\
&\geq \Ind_{A_{\mu_1}\times A_{\mu_2}\times \dotsc \times A_{\mu_k}}^{A_n} \chi_{\nu^1}\times\dotsc\times\chi_{\nu^k}\\
&\geq \chi_{\phi(\mu)}
\end{align*}
In particular, $\Ind_{C_\mu}^{A_n} \delta \geq \chi_{\phi(\mu)}^+ \text{ and } \Ind_{C_\mu}^{A_n} \delta \geq \chi_{\phi(\mu)}^-$. We are done.

So let us assume that $3$ or $5$ is a part (say $\mu_j$) of $\mu$.
Note that either $3$ or $5$ is a part of $\mu$.
Then we may choose $\nu^{j+1}=(\tfrac{\mu_{j+1}+1}{2},2,1^{\tfrac{\mu_{j+1}-5}{2})}$ and 
if $\mu_j=3$, then $\nu^j= (3)$, otherwise $\nu^j=(4,1)$.
We let $\nu^t=\phi(\mu_t)$ for all $t\neq j,j+1$.
Similarly we have, $\Ind_{C_\mu}^{A_n} \delta\geq \chi_\lambda^+$ and $\Ind_{C_\mu}^{A_n} \delta\geq \chi_\lambda^-$.
This completes the proof.
\end{proof}
\subsection*{Acknowledgements}
I am deeply grateful to my guide, A.~Prasad, for his unwavering encouragement and many fruitful discussions.
We thank A.~Staroletov for pointing out the relevance of results of Giannelli and Law~\cite{Giannelli_law} which lead us to Proposition~\ref{proposition:Giannelli}.
We thank S.~Sundaram for her encouragement and fruitful discussions.
We also extend our gratitude to her for carefully reviewing a preliminary version of this article and suggesting numerous improvements that significantly enhanced the quality of this exposition.
We thank D.~Prasad and S.~Viswanath for their encouragement and fruitful discussions.
We thank R.~Kundu and Amrutha~P for fruitful discussions.

\bibliographystyle{abbrv}
\bibliography{refs}
\end{document}